\newtheorem{lemma}{Lemma}
\newtheorem{remark}[lemma]{Remark}
\newtheorem*{remark*}{Remark}
\def\upddots{\mathinner{\mkern 1mu\raise 1pt \hbox{.}\mkern 2mu
\mkern 2mu \raise 4pt\hbox{.}\mkern 1mu \raise 7pt\vbox {\kern 7
pt\hbox{.}}} }
\newcommand{\db} {\! \! \!   \! \! \!  \! \! \! \! \! \!}
\newcommand{\F}{F}
\newcommand{\K}{\mathbb K}
\newcommand{\Of}{\mathbb O_{\F}}
\newcommand{\Pf}{\mathbb P_{\F}}
\newcommand{\N}{\mathbb N}
\newcommand{\Z}{\mathbb Z}
\newcommand{\half}{\frac 1 2}
\newcommand{\ab} {{| \!| }}
\newcommand{\Q}{\mathbb Q}
\newcommand{\C}{\mathbb C}
\def\>{\rangle}
\def\<{\langle}
\newtheorem{lem}{Lemma}
\newtheorem{thm}{Theorem}
\newtheorem{cor}{Corollary}
\newtheorem{prop}{Proposition}
\def\dotunion{
\def\dotunionD{\bigcup\kern-9pt\cdot\kern5pt}
\def\dotunionT{\bigcup\kern-7.5pt\cdot\kern3.5pt}
\mathop{\mathchoice{\dotunionD}{\dotunionT}{}{}}} \setcounter
\newtheorem*{theorem*}{Theorem}
\newtheorem*{lemma*}{Lemma}
\newcommand\blfootnote[1]{%
  \begingroup
  \renewcommand\thefootnote{}\footnote{#1}%
  \addtocounter{footnote}{-1}%
  \endgroup
}
\begin{document}
\title{A $p$-adic analog of Hasse-Davenport product relation involving $\epsilon$-factors}
\author{Dani Szpruch}
 \maketitle
\begin{abstract}
In this paper we prove { some generalizations} of the classical Hasse-Davenport product relation for certain arithmetic factors defined on a $p$-adic field $F$, among them one finds the $\epsilon$-factors appearing in Tate's thesis. We then show that these generalizations are equivalent to some representation theoretic identities relating the determinant of ramified local coefficients matrices defined for coverings of $SL_2(F)$ to Plancherel measures and $\gamma$-factors.
\end{abstract}

\blfootnote{2020 Mathematics Subject Classification:  11L05, 11F70 }
\section{Introduction}

Let $\K$ be a finite field with $q$ elements. Let $\chi'$ and $\psi'$ be non-trivial characters of $\K^*$ and $\K$ respectively. The  Gauss sum associated with $\chi'$ and $\psi'$ is defined by
$$G(\chi',\psi')=\sum_{x\in \K^*} \chi'(k)\psi'(k).$$
Suppose that $d\in \N$ divides $q-1$. The Hasse-Davenport product relation, \cite{HD} and \cite{GrKo}, states that if $\chi'^d$ is non-trivial then
$$\prod_{\eta \in \widehat{\K^*/{\K^*}^d}} G(\chi'\eta,\psi')=G(\chi'^d,\psi'_d)\prod_{1\neq \eta \in \widehat{\K^*/{\K^*}^d}} G(\eta,\psi').$$
Here $\widehat{\K^*/{\K^*}^d}$ is the subgroup of characters of $\K$ whose kernel contains ${\K^*}^d$ and $\psi'_d$ is the non-trivial character of $F$ defined by $x\mapsto \psi'(dx).$  We note that the quantity $\prod_{1\neq \eta \in \widehat{\K^*/{\K^*}^d}} G(\eta,\psi')$ is quite simple, for instance  if $d$ is odd it equals $q^{\frac {d-1}{2}}$, so this classical identity essentially relates the product on its left hand side to  $G(\chi'^d,\psi'_d)$.

The first goal of our paper is to generalize  the Hasse-Davenport product relation to certain arithmetic factors defined on  $p$-adic fields. The second goal is to give a representation theoretic applications and interpretations to some of these generalizations. We now give more details, starting with the first goal.

Let $F$ be a $p$-adic field and let $\varpi$ be a uniformizer.  Assume that $\K=\Of / \Pf$ is the residue field of $F$ and note that $\K^*=\Of^*/1+\Pf$. Let $\psi$ be a non-trivial character of $F$, let  $\chi$ be a character of $F^*$ and let $s$ be complex number. Let
$$\gamma(s,\chi,\psi)=\epsilon(s,\chi,\psi)\frac{L(1-s,\chi^{-1})}{L(s,\chi)}$$
be  the Tate $\gamma$-factor defined in \cite{T}. Recall that  $\epsilon(s,\chi,\psi)$ is a monomial function in $q^{-s}$ and that if $\chi$ is ramified then $\epsilon(s,\chi,\psi)=\gamma(s,\chi,\psi)$. As we explain in detail in the body of the paper, $\epsilon(s,\chi,\psi)$ and $G(\chi',\psi')$ are analogous as they may be defined as proportion factors arising from similar uniqueness theorems. In fact, $\epsilon$-factors generalize the notion of  Gauss sums: it is well known that
$$\epsilon(1-s,\chi^{-1},\psi)= \bigl(\chi(\varpi)q^{\half-s} \bigr)^{e(\psi)-e(\chi)} \tau(\chi,\psi)$$
where
$$\tau(\chi,\psi)=q^{\frac{-e(\chi)}{2}} \begin{cases} 1  & \chi \, \operatorname{is} \,  \operatorname{unramified} ;\\ \sum_{a\in \Of^*/1+\Pf^{e(\chi)}} \chi(a)\psi(\varpi^{e(\psi)-e(\chi)}a)  & \chi \, \operatorname{is} \,  \operatorname{ramified}. \end{cases}$$
Here $e(\chi)$ and $e(\psi)$ are the conductors of $\chi$ and $\psi$ respectively. If $e(\chi)=1$ then $x\mapsto \psi \bigl(x\varpi^{e(\psi)-e(\chi)}\bigr)$ and $x\mapsto \chi(x)$ are well defined characters on $\K$ and $\K^*$ respectively. Denote these by $\psi'$ and $\chi'$. We have $\tau(\chi,\psi)=q^{-\half}G(\chi',\psi')$.

Suppose now that $d$ is relatively prime to the residual characteristic of $F$. In this case $1+\Pf \subseteq {F^*}^d$. Consequently, There exists a finite group $J$ of characters of $F^*$ such that the restriction to $\Of^*$ defines an isomorphism from $J$ to $\widehat{\K^*/{\K^*}^d}.$ In Section \ref{mainsec} we generalize the Hasse-Davenport product relation for $\tau(\chi,\psi)$ under the assumption that the residual characteristic of $F$ is odd. In our generalization $J$ plays to role of $\widehat{\K^*/{\K^*}^d}$, see  Theorem \ref{tau theorem}.

If $e(\chi)=1$ then Theorem 1 is  the Hasse-Davenport product relation so we focus on the case where $e(\chi)>1.$ In this case, contrary to the $e(\chi)=1$ case, $\prod_{ \eta\in J}{\tau}(\eta\chi ,\psi)$ and $\tau(\chi ,\psi)^d$ are essentially equal and the burden of the proof is shifted to relating $\tau(\chi ,\psi)^d$ to $\tau(\chi^d ,\psi_d)$.

One of the main new ingredients in our proof is a representation of $\tau(\chi,\psi)$ in a certain summation-free form. If $e(\chi)$ is even this was already achieved by Bushnell and Henniart in \cite{BH} building on the relation between $\psi$ and the restriction of $\chi$ to $1+\Pf^{\frac {e(\chi)}{2}}$ described by  Deligne in \cite{del}. We follow these ideas in Lemmas \ref{blemma} and  \ref{tauvalue} for the cases where the conductor of $\chi$ is an odd number greater than 1. { At this moment we could only prove these lemmas under the assumption that the characteristic of $\K$ is odd. Consequently, our main results Theorems \ref{tau theorem}, \ref{mainres} and \ref{ramdet} hold only in this case. However, for the purpose of future generalizations we did formulate some other results in greater generality.}

Our generalization is particulary simple when $d$ is odd, in this case it takes the form:
$$ \prod_{ \eta\in J}{\tau}(\eta\chi ,\psi)=\tau(\chi^d ,\psi_d)$$
and one deduces at once that if $\chi^{d}$ is ramified and $d$ is odd then
\begin{equation} \label{example} \prod_{\eta \in J} \epsilon(1-s,(\chi\eta)^{-1},\psi)=q^{\frac{d-1}{2} \bigl(e(\psi)-e(\chi^d)\bigr)}\epsilon(1-ds,\chi^{-d},\psi_d).\end{equation}

We also prove a similar identity replacing the roll of Tate $\gamma$-factor with the metaplectic $\widetilde{\gamma}$-factor defined in \cite{Sz3}. See Theorem \ref{mainres} in Section \ref{mainsec} for both identities. In Theorem \ref{mainres} we also use the $d=2$ case of the generalized Hasse-Davenport product relation to prove an identity relating $\gamma$ and $\widetilde{\gamma}$-factors.

There is no reason to expect that \eqref{example} holds for unramified characters since the analogy between $\epsilon$-factors and Gauss sums breaks in these cases. For unramified characters the identity is false also when we replace the $\epsilon$-factors by $\gamma$-factors. A Similar failure occurs for the other 2 identities in Theorem \ref{mainres}.

Our second goal is to provide some representation theoretic identities  which do hold in the unramified cases and whose ramified cases are equivalent to those presented in Theorem \ref{mainres}. This is achieved in Section \ref{finalres}, Theorem \ref{ramdet}. We now elaborate on this point for \eqref{example}.

Assume as before that $d$ is { an} odd { number relatively prime to $p$ and that $\mu_d \subseteq F$}. Let $\widetilde{G_o}$ be the familiar $d$-fold cover of $G_o=SL_2(F)$ constructed in  \cite{Kub} (due to some delicacies the notation is slightly different in Section \ref{apsec} where we use $n$ for the degree of the cover). Let $H_o$ be the diagonal subgroup of $G_o$, let $\widetilde{H_o}$ be the inverse image of $\widetilde{H_o}$ inside $\widetilde{G_o}$ and let $N$ be the subgroup of upper triangular matrices in $G$. $\widetilde{G_o}$ splits over $N$ uniquely so we identify $N$ with its embedding in $\widetilde{G_o}$. Since $\widetilde{H_o}$ normalizes this embedding we may extend any representation $\widetilde{H_o}$ to the inverse image of the standard Borel subgroup by defining it to be trivial on $N$.

Denote by $Z(\widetilde{H_o})$ the center of $\widetilde{H_o}$. The isomorphism class of an irreducible admissible genuine representation $(\sigma_o,V_o)$ of $\widetilde{H_o}$ is determined by its central character and the set of genuine characters of $Z(\widetilde{H_o})$ is canonically parameterized by the set of $d^{th}$ powers of characters $F^*$. Denote by $\chi^d$ the character of $F^*$ corresponding to $\sigma_o$ via this parametrization.

Let $I(\sigma_o,s)$ be the normalized parabolic induction on $\widetilde{G_o}$ from $\sigma_o$. Here, $s$ is the usual  complex parameter. Let $A_o({\sigma}_o,s):I(\sigma_o,s) \rightarrow I(\sigma_o^w,{-s})$ be the standard intertwining operator associated with a non-trivial Weyl element $w$. Recall that, $\mu(\sigma_o,s)$, the Plancherel measure associated with $\sigma_o$, is the rational function in $q^{-ds}$ defined by the relation
$$A(\sigma_o^w,{-s})\circ A(\sigma_o,s)=\mu(\sigma_o,s)^{-1}Id.$$

Let $ Wh_\psi\bigl(I(\sigma_o,s)\bigr)$ be the space of $\psi$-Whittaker functionals on $I(\sigma_o,s)$. Unlike the linear case, this space is not one dimensional. By duality, $A(\sigma_o,s)$ give rise to map between the corresponding Whittaker spaces,

$$ {A_o}_{\psi}(\sigma_o,s):Wh_\psi\bigl(I(\sigma_o^w,-s)\bigr) \rightarrow Wh_\psi\bigl(I(\sigma,s)\bigr).$$

As observed in \cite{Sz19} the two Whittaker spaces above are identified by Jacquet-type integrals with the space of functionals on $V_o$ . This means that  ${A_o}_{\psi}(\sigma_o,s)$ may be viewed as an endomorphism rather than a map between two different spaces. As such its determinant $D_o(\sigma_o,s,\psi)$ is an invariant of $\sigma_o$ and $\psi$. In \cite{GSS} we have defined a new invariant by
$$S(\sigma_o,s,\psi)=D_o(\sigma_o,s,\psi) \cdot \mu(\sigma_o,s)^ {\frac{d-1}{2}}.$$
In the linear case, namely in the $d=1$ case, $D_o(\sigma_o,s,\psi)$ is the reciprocal of Shahidi's local coefficient, \cite{Shabook}.
In Theorem 3.14 of \cite{GSS} it was proven that if $\psi$ is normalized, $\chi^d$ is unramified and $d$ is relatively prime to the residual characteristic of $F$ then
$S(\sigma_o,s,\psi)=\gamma(1-ds,\chi^{-d},\psi_{d}).$ Equivalently,
\begin{equation} \label{example2} D_o(\sigma_o,s,\psi)=\mu(\sigma_o,s)^{\frac{1-d}{2}}  \gamma(1-ds,\chi^{-d},\psi_{d}). \end{equation}
This result was generalized in \cite{GSS2} for other Brylinskiy-Deligne covering groups.

In Theorem \ref{ramdet} we shall show that \eqref{example2} holds in the ramified cases as well. Moreover, we shall show that in these cases  \eqref{example2}
is equivalent to \eqref{example}. Hence we claim that the actual generalization of the Hasse-Davenport product relation to $p$-adic fields is Equation \eqref{example2}.

The paper is organized as follows. In Section \ref{pandp} we introduce the  arithmetic objects to be studied and gather some needed information. We emphasize
the relation between $\epsilon$-factors and Gauss sums. In Section \ref{reschi} we prove the needed relations between the restrictions of $\chi$ and $\psi$. We use these relations in Section \ref{comptau} to compute $\tau(\chi,\psi)$ in the cases where $e(\chi)>1$. Our main results, the generalizations of the Hasse-Davenport product relation, are then given in Section \ref{mainsec}. Section \ref{apsec} is devoted to the application of our main results to the representation theory of covering groups. We first describe the required known results on the subject. Then, in Section \ref{finalres} we spell out the formulas for the determinants and connect them to our main results. In the final short section of the paper we give some remarks about the even cases we did not cover.

\section{Preliminaries and preparations} \label{pandp}
\subsection{General notation}
For an { abelian} topological group $G$ we denote by $\widehat{G}$ the group of characters of $G$, that is, the group of continuous homomorphisms from $G$ to $\C^*$. If $H$ is a subgroup of $G$ of finite index we identify $\widehat{G/H}$ with the subgroup of $\widehat{G}$ consisting of elements whose kernel contain $H$. For a complex vector space $V$ we shall denote by $\widehat{V}$ the vector space of linear functionals on $V$.

For a field $L$, $\psi \in \widehat{L}$ and $a\in L^*$ we define  $\psi_a \in \widehat{L}$ by $x\mapsto \psi(ax)$. We note that if $\psi$ is non-trivial then $\psi_a$ is also non-trivial. If $d\in \N$ is relatively prime to the characteristic of $L$ we shall denote by $\mu_d(L)=\mu_d$ the cyclic group of $d$ elements consisting of $d^{th}$ roots of 1 in a fixed algebraic closure of  $L$. Define
$$sign(L)=\begin{cases} 1 &  -1 \in {L^*}^{2}; \\ -1  &   -1 \not \in {L^*}^{2}. \end{cases}$$

\subsection{Gauss sums}
Let $\K$ be a finite field with $q$ elements. Starting from Section \ref{pfield} we shall assume that $\K$ is the residue field of a $p$-adic field $F$.
We shall sometimes  use $'$ when defining objects associated with $\K$ to distinguish them from their $p$-adic counterparts.

Fix $\psi'$, a non-trivial element  in $\widehat{\K}$ and  $\chi'$, a non-trivial element in $\widehat{\K^*}$.  Let
$$G(\chi',\psi')=\sum_{x\in \K^*} \chi'(x)\psi'(x).$$
be the Gauss sum associated with $\psi'$ and   $\chi'$. By changing summation index we obtain
\begin{equation} \label{gausspsi} G(\chi',\psi'_a)=\chi^{-1}(a)G(\chi',\psi'). \end{equation}

We now describe one context in which Gauss sums arise. Let $S(\K)$ be the space of complex functions on $\K$. For $f \in S(\K)$ define its Fourier transform $f_{\psi'} \in S(\K)$ by
$$f_{\psi'}(y)=q^{-\half}\sum_{x\in \K} f(x)\psi'(xy).$$
With this normalization Fourier inversion formula takes the form ${f_{\psi'}}_{\psi'}(x)=f(-x)$.

$\K^*$ acts on  $S(\K)$ by right translations $\rho$. By duality $\K^*$ also acts on $\widehat{S(\K)}$.
Namely, for $\xi \in \widehat{S(\K)}$ and $k \in \K^{*}$ we define $\rho(k)\xi \in \widehat{S(\K)}$ by
\begin{equation} \label{actonhat} \bigl( \rho(k)\xi \bigr)(f)=\xi \bigl(\rho(k)f\bigr). \end{equation}
Let $\widehat{S(\K)}_{\chi'}$ be the space of $\chi$ eigen functionals.
An easy exercise  shows that $\dim \, \widehat{S(\K)}_{\chi'}=1$. This uniqueness fails for the trivial character of $\K^*$.
One then verifies that $$f \mapsto \zeta'(f,\chi')=\sum_{x \in \K^*} f(x)\chi'(x)$$
is a non-zero element in $\widehat{S(\K)}_{\chi'^{-1}}$ and that $f \mapsto \zeta'(f_{\psi'} ,\chi'^{-1})$ is also a non-zero element in $\widehat{S(\K)}_{\chi'^{-1}}$. Thus, there exists a non-zero constant $\epsilon'(\chi',\psi')$ such that
$$\zeta'(f_{\psi'},\chi'^{-1})=\epsilon'(\chi',\psi')\zeta'(f,\chi')$$ for all  $f \in S(\K)$. By plugging $f=\delta_1$ one observes that
\begin{equation} \label{finiteeps} \epsilon'(\chi',\psi')=q^{-\half}G(\chi'^{-1},\psi'). \end{equation}
The well known property,
\begin{equation} \label{gaussinv} G(\chi',\psi')G(\chi'^{-1},\psi')=\chi^{'}(-1)q \end{equation}
follows from Fourier inversion formula. Define
$$G_{\psi'}(\K)=\sum_{x\in \K} \psi'(x^2).$$

If the characteristic of $\K$ is odd then  $\K^*$ has a unique non-trivial quadratic character. We denote it by $\eta_{_\K}$. Note that $sign(\K)=\eta_{_\K}(-1)$.

\begin{lem} If the characteristic of $\K$ is odd then

\begin{equation} \label{quadgauss} G_{\psi'}(\K)=G(\eta_{_\K},\psi'),\end{equation}

\begin{equation} \label{cquadgauss} G_{\psi'_a}(\K)=\eta_{_\K}(a) G_{\psi'}(\K),\end{equation}

\begin{equation} \label{quadinv} G_\psi'(\K)^2=q \cdot sign(\K). \end{equation}

\end{lem}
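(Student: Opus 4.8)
The plan is to prove \eqref{quadgauss} directly by a fibre-counting argument and then derive \eqref{cquadgauss} and \eqref{quadinv} formally from it, using the identities \eqref{gausspsi} and \eqref{gaussinv} already recorded above.

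First I would prove \eqref{quadgauss}. Group the sum $G_{\psi'}(\K)=\sum_{x\in\K}\psi'(x^2)$ according to the value $y=x^2$. Because the characteristic of $\K$ is odd, the fibre of the squaring map over $0$ is $\{0\}$, while over a nonzero $y$ it has two elements if $y\in{\K^*}^2$ and none otherwise; equivalently, with the usual convention $\eta_{_\K}(0)=0$, the number of $x\in\K$ with $x^2=y$ equals $1+\eta_{_\K}(y)$ for every $y\in\K$. Hence $G_{\psi'}(\K)=\sum_{y\in\K}\bigl(1+\eta_{_\K}(y)\bigr)\psi'(y)=\sum_{y\in\K}\psi'(y)+\sum_{y\in\K^*}\eta_{_\K}(y)\psi'(y)$. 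The first sum is $0$ because $\psi'$ is a non-trivial additive character of $\K$, and the second sum is $G(\eta_{_\K},\psi')$ by definition, which yields \eqref{quadgauss}.

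Next, \eqref{cquadgauss} follows by applying \eqref{quadgauss} to $\psi'_a$ in place of $\psi'$, giving $G_{\psi'_a}(\K)=G(\eta_{_\K},\psi'_a)$, and then invoking \eqref{gausspsi} with $\chi'=\eta_{_\K}$; since $\eta_{_\K}$ is quadratic, $\eta_{_\K}^{-1}=\eta_{_\K}$, so \eqref{gausspsi} reads $G(\eta_{_\K},\psi'_a)=\eta_{_\K}(a)G(\eta_{_\K},\psi')=\eta_{_\K}(a)G_{\psi'}(\K)$. Finally, for \eqref{quadinv} I would use \eqref{quadgauss} to write $G_{\psi'}(\K)^2=G(\eta_{_\K},\psi')^2=G(\eta_{_\K},\psi')G(\eta_{_\K}^{-1},\psi')$, and \eqref{gaussinv} then evaluates this to $\eta_{_\K}(-1)q$, which equals $q\cdot sign(\K)$ by the identification $sign(\K)=\eta_{_\K}(-1)$ noted just before the lemma.

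The only genuine content is the fibre-counting identity $\#\{x\in\K:x^2=y\}=1+\eta_{_\K}(y)$, which is exactly where the odd-characteristic hypothesis enters (it guarantees $\ker(x\mapsto x^2)=\{\pm1\}$ has order $2$ and that $\eta_{_\K}$ exists); this is the main, and rather mild, obstacle, while the remaining two parts are purely formal consequences of identities already established.
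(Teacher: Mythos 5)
Your proof is correct and follows the same route as the paper, which states that \eqref{quadgauss} is a standard exercise (your fibre-counting argument is the standard one) and that \eqref{cquadgauss} and \eqref{quadinv} follow from it together with \eqref{gausspsi} and \eqref{gaussinv}. You have merely written out the details the paper leaves implicit.
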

The first assertion of this Lemma is a standard exercise. The  other two  assertions follow from the first assertion along with   \eqref{gausspsi} and   \eqref{gaussinv}.

Since $\K$ is finite the following 3 assumptions are equivalent: $\widehat{\K^*/{\K^*}^d}$ is a cyclic group of $d$ elements, $\mu_d \subseteq \K$, $d$ divides $q-1$. {In particular, these assumptions imply that $gcd(d,p)=1$.}
\begin{lem} \label{twistprod}Assume that  $\mu_d \subseteq \K$.  Denote by $\eta_{_o}$ a generator of $\widehat{\K^*/{\K^*}^d}$. We have
$$\prod_{1\neq \eta \in \widehat{\K^*/{\K^*}^d}}{G}(\eta,\psi)=  \begin{cases}  q^{\frac {d-1}{2}}  & d \, \operatorname{is} \,  \operatorname{odd} ;\\  q^{\frac {d-2}{2}}\eta_{o}(-1)^{\frac {d(d-2)}{8}}G_\psi(\K)  &  d \, \operatorname{is} \,  \operatorname{even} \end{cases}$$
\end{lem}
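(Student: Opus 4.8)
The plan is to evaluate the product $P_d = \prod_{1\neq \eta \in \widehat{\K^*/{\K^*}^d}} G(\eta,\psi')$ by pairing characters with their inverses and applying the elementary Gauss sum identities already recorded, namely \eqref{gaussinv}, i.e. $G(\chi',\psi')G(\chi'^{-1},\psi')=\chi'(-1)q$, together with \eqref{quadgauss} in the even case. Write $H=\widehat{\K^*/{\K^*}^d}$, a cyclic group of order $d$ generated by $\eta_{_o}$. The involution $\eta\mapsto\eta^{-1}$ acts on $H\setminus\{1\}$; its fixed points are exactly the elements of order $\le 2$ other than the trivial one, i.e. the unique nontrivial quadratic character $\eta_{_\K}$ when $d$ is even (which then lies in $H$ since $2\mid d$), and none when $d$ is odd.

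First I would treat $d$ odd. Then $\eta\mapsto\eta^{-1}$ is a fixed-point-free involution on the $d-1$ nontrivial characters, so they split into $(d-1)/2$ pairs $\{\eta,\eta^{-1}\}$. By \eqref{gaussinv}, each pair contributes $G(\eta,\psi')G(\eta^{-1},\psi')=\eta(-1)q$. Hence $P_d = q^{(d-1)/2}\prod_{\{\eta,\eta^{-1}\}}\eta(-1)$. The sign factor is $\prod_{1\neq\eta\in H}\eta(-1)$ taken over a set of pair-representatives; but grouping $\eta$ with $\eta^{-1}$ gives $\eta(-1)^2=1$ for each pair, so $\prod_{\{\eta,\eta^{-1}\}}\eta(-1)=1$ once we note that over pairs the product of $\eta(-1)$ (one per pair) squares to $\prod_{1\neq\eta}\eta(-1)$; more directly, since $d$ is odd, $-1\in{\K^*}^d$ forces $\eta(-1)=1$ for every $\eta\in H$. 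Either way $P_d=q^{(d-1)/2}$, as claimed.

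Next, for $d$ even, set aside the fixed point $\eta_{_\K}$; the remaining $d-2$ nontrivial characters form $(d-2)/2$ inverse-pairs, each contributing $\eta(-1)q$ by \eqref{gaussinv}. Thus $P_d = G(\eta_{_\K},\psi')\cdot q^{(d-2)/2}\cdot \prod_{\{\eta,\eta^{-1}\}}\eta(-1)$, and by \eqref{quadgauss} the leading factor is $G_{\psi'}(\K)$. It remains to identify the sign $S := \prod_{\{\eta,\eta^{-1}\}}\eta(-1)$ with $\eta_{_o}(-1)^{d(d-2)/8}$. Writing $\eta=\eta_{_o}^{k}$, one has $\eta(-1)=\eta_{_o}(-1)^{k}$, and $\eta_{_o}(-1)=1$ unless $2\mid d$ and moreover $-1\notin{\K^*}^d$; in the relevant case $\eta_{_o}(-1)=(-1)^{?}$ is governed by whether $d/2$ is the exact order of $-1$'s character. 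I would compute $S$ as follows: take pair-representatives $\eta_{_o}^k$ for $k=1,\dots,d/2-1$ (these, together with their inverses $\eta_{_o}^{d-k}$ and the fixed point $\eta_{_o}^{d/2}$, exhaust the nontrivial characters), so $S=\prod_{k=1}^{d/2-1}\eta_{_o}(-1)^{k}=\eta_{_o}(-1)^{\sum_{k=1}^{d/2-1}k}=\eta_{_o}(-1)^{(d/2-1)(d/2)/2}=\eta_{_o}(-1)^{d(d-2)/8}$, which is exactly the asserted exponent. (When $\eta_{_o}(-1)=1$ this is trivially consistent; when $\eta_{_o}(-1)=-1$ the parity of $d(d-2)/8$ is what matters, and this is an integer since $8\mid d(d-2)$ for even $d$.)

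The main obstacle is purely bookkeeping: making sure the case $\eta_{_o}(-1)=-1$ is handled with the correct parity and that $d(d-2)/8\in\Z$ so the formula makes sense — this holds because for even $d$ one of $d,d-2$ is $\equiv 0\pmod 4$ and the other is even, giving $8\mid d(d-2)$. Once the indexing of pair-representatives is fixed, the arithmetic-progression sum $\sum_{k=1}^{d/2-1}k$ produces the exponent, and everything else is a direct application of \eqref{gaussinv} and \eqref{quadgauss}. $\qed$
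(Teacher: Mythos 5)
Your argument is correct and is essentially the paper's own proof: pair each nontrivial character with its inverse, apply \eqref{gaussinv} to each pair (noting $\eta(-1)=1$ for all $\eta$ when $d$ is odd), and in the even case set aside the unique order-two element $\eta_{_\K}$ and use \eqref{quadgauss}, with the exponent $d(d-2)/8$ arising as $\sum_{k=1}^{d/2-1}k$. The paper carries out exactly this pairing, just more tersely.
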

\begin{proof} Assume first that $d$ is odd. In this case $\widehat{\K^*/{\K^*}^d}$ has no elements of order 2 and $\eta(-1)=1$ for all $ \eta \in \widehat{\K^*/{\K^*}^d}$. The assertion now follows from \eqref{gaussinv}. Assume now that $d$ is even. In this case { the characteristic of $\K$ is odd and $\eta_{o}^{\frac d 2}=$}$\eta_{_\K}$ is the only element of  $\widehat{\K^*/{\K^*}^d}$ of order 2. Thus, by \eqref{gaussinv} and \eqref{quadgauss}
$$\prod_{1\neq \eta \in \widehat{K^*/{K^*}}^d}{G}(\eta,\psi')=  G_{_\K}(\psi')q^{\frac {d-2}{2}} \prod_{1\leq i\leq \frac{d}{2}-1}\eta_{o}(-1).$$
\end{proof}

\subsection{The classical Hasse-Davenport product relation}

Suppose that $\mu_d { \subseteq} \K$. The classical Hasse-Davenport product relation states that if $\chi'^d$ is non-trivial (equivalently, $\chi' \notin  \widehat{\K^*/{\K^*}^d}$) then
\begin{equation} \label{hd} \prod_{\eta \in \widehat{\K^*/{\K^*}^d}} G(\chi'\eta,\psi')=G(\chi'^d,\psi'_d)\prod_{1\neq \eta \in \widehat{\K^*/{\K^*}^d}} G(\eta,\psi').\end{equation}

\begin{lem} \label{d and sign} Let $\K$ be a finite field of an odd  characteristic. Assume that $\mu_d \subseteq \K$ and  denote by $\eta_o$ a generator of $\widehat{\K^*/{\K^*}^d}$.
\begin{enumerate}
\item  If $d$ id odd then $$\eta_{_\K}(d)=sign(\K)^{\frac {d-1}{2}}.$$
\item If $d$ is even then $$\eta_{_\K}(2d)=\eta_o(-1)^{\frac{d(d-2)}{8}}.$$
{ (here we identify $d$ with an element of $\K$ by takeing its residue mod $p$).}

\end{enumerate}
\end{lem}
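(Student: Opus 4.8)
The plan is to reduce both assertions to elementary facts about quadratic residues in the finite field $\K$, i.e. to computations of Legendre-type symbols $\eta_{_\K}(a) = a^{(q-1)/2}$ for various integers $a$, using the hypothesis $\mu_d \subseteq \K$, which is equivalent to $d \mid q-1$.

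For part (1), since $d$ is odd and $d \mid q-1$, I would write $q-1 = d m$ and observe that $\eta_{_\K}(d) = d^{(q-1)/2} = d^{dm/2}$. The subtlety is that $m = (q-1)/d$ may be odd or even; but note $q$ is odd so $q - 1$ is even, hence $m$ and $d m$ have the same $2$-adic valuation, and since $d$ is odd, $dm/2 \equiv d \cdot (m/2)$ only makes sense after splitting off the power of $2$. The cleaner route: because $d$ is odd, $d$ is a square in $\K$ if and only if $d$ is a $2k$-th power obstruction vanishes; more directly, raising to the $(q-1)/d$ power is a bijection on $\mu_d$-cosets, and one can use that $\widehat{\K^*/{\K^*}^d}$ has odd order $d$ so contains no quadratic character, forcing a relation between $\eta_{_\K}$ evaluated on $d$-th powers. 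Concretely I expect the identity $\eta_{_\K}(d) = \eta_{_\K}(-1)^{(d-1)/2} = sign(\K)^{(d-1)/2}$ to come out of the following: let $g$ be a generator of $\K^*$; then $\eta_{_\K}(g) = -1$, and $d \equiv g^j$ for some $j$, and one computes $j \bmod 2$ by comparing the factorization of $(q-1)/d!$-type products — in practice this is exactly the finite-field Gauss-sum identity behind the classical Hasse--Davenport relation, so I would instead derive (1) by \emph{combining} Lemma \ref{twistprod} (odd case) with the classical relation \eqref{hd} applied to $\chi' = \eta_{_\K}$: the left side becomes $\prod_{\eta} G(\eta_{_\K}\eta, \psi')$, the right side involves $G(\eta_{_\K}^d, \psi'_d) = G(\eta_{_\K}, \psi'_d) = \eta_{_\K}(d) G(\eta_{_\K},\psi')$ by \eqref{gausspsi}; then using \eqref{gaussinv}, \eqref{quadinv} and Lemma \ref{twistprod} to evaluate everything else, the power of $q$ and the sign bookkeeping collapses to $\eta_{_\K}(d) = sign(\K)^{(d-1)/2}$.

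For part (2), with $d$ even, $\eta_{_\K}$ itself lies in $\widehat{\K^*/{\K^*}^d}$ (it is the unique element of order $2$), so $\eta_o$ restricted appropriately has $\eta_o^{d/2} = \eta_{_\K}$, giving $\eta_o(-1)^{d/2} = \eta_{_\K}(-1) = sign(\K)$. Raising to a suitable power, $\eta_o(-1)^{d(d-2)/8}$ is $sign(\K)$ raised to $(d-2)/4$ when $4 \mid d$, and otherwise $\eta_o(-1)$ itself is a primitive enough root; so the left side equals $\pm 1$ according to $d \bmod 8$ and the value of $\eta_o(-1)$. On the right, $\eta_{_\K}(2d) = \eta_{_\K}(2)\eta_{_\K}(d)$, and $\eta_{_\K}(2) = sign$ depends on $q \bmod 8$ by the second supplement to quadratic reciprocity, namely $\eta_{_\K}(2) = (-1)^{(q^2-1)/8}$, while $\eta_{_\K}(d)$ is controlled by writing $d \mid q-1$. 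I would match the two sides by a case analysis on $q \bmod 8$ (four cases) together with $d \bmod 8$, tracking in each case whether $-1$, $2$, and $d$ are squares in $\K$; the hypothesis $d \mid q-1$ constrains these jointly (e.g. if $8 \mid d$ then $8 \mid q-1$ so $q \equiv 1 \bmod 8$ and both $2$ and $-1$ are squares), which is what makes the finitely many cases consistent.

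The main obstacle I anticipate is the sign bookkeeping in part (2): unlike (1), it genuinely depends on $d \bmod 8$ and on $\eta_o(-1)$, and one must use the divisibility $d \mid q-1$ at exactly the right moment to see that the exponent $d(d-2)/8$ and the symbol $\eta_{_\K}(2d)$ are forced to agree — a purely formal manipulation without invoking $\mu_d \subseteq \K$ will not suffice. I would organize this by first reducing $\eta_o(-1)^{d(d-2)/8}$ to a power of $sign(\K)$ plus a correction term involving $\eta_o(-1)$ only when $v_2(d) = 1$, and then checking that correction against $\eta_{_\K}(2)$ via the $q \bmod 8$ supplement.
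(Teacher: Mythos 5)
Your part~(1), after the initial detour (which you rightly abandon), lands on exactly the paper's argument: apply the classical relation \eqref{hd} to $\chi'=\eta_{_\K}$ (legitimate since $d$ odd gives $\eta_{_\K}^d=\eta_{_\K}\neq 1$), evaluate the left-hand side by pairing each $\eta_{_\K}\eta$ with its inverse via \eqref{gaussinv} and using $\eta(-1)=1$ for $\eta$ of odd order, rewrite $G(\eta_{_\K}^d,\psi'_d)=\eta_{_\K}(d)G(\eta_{_\K},\psi')$ via \eqref{gausspsi}, and cancel against Lemma~\ref{twistprod}. That part is correct and is the same proof as in the paper.

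Part~(2) is where the gap lies, and it is concrete. First, your reduction of the left-hand side has the cases swapped: $(d-2)/4$ is an integer precisely when $d\equiv 2\pmod 4$, not when $4\mid d$, so the manipulation $\eta_o(-1)^{d(d-2)/8}=\bigl(\eta_o(-1)^{d/2}\bigr)^{(d-2)/4}$ is available exactly in the case you exclude; and the case where a genuine $\eta_o(-1)$-dependence survives is $v_2(d)=2$, where $d(d-2)/8$ is odd, not $v_2(d)=1$ (there $\eta_o(-1)^{d(d-2)/8}=\eta_{_\K}(-1)^{(d'-1)/2}$ with $d'=d/2$, already a power of $sign(\K)$). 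For $v_2(d)\geq 3$ the exponent is even and the left side is $1$; this is also what rescues your proposed ``case analysis on $q$ and $d$ mod $8$,'' since in that range $\eta_o(-1)$ is \emph{not} determined by $q,d\bmod 8$ (compare $d=8$ with $q=17$ versus $q=41$) but no longer matters. Second, ``$\eta_{_\K}(d)$ is controlled by $d\mid q-1$'' hides the actual mechanism: one must apply part~(1) to the odd part $d'$ of $d$ (valid since $\mu_{d'}\subseteq\K$) to get $\eta_{_\K}(d')=sign(\K)^{(d'-1)/2}$, which is how the paper disposes of that factor. Third, the one irreducible step, occurring when $d\equiv 4\pmod 8$, is the identity $\eta_o(-1)=\eta_{_\K}(2)$, equivalently that $2\in{\K^*}^2$ if and only if $\mu_8\subseteq\K$ (given $\mu_4\subseteq\K$); the paper proves this with $(1+i)^2=2i$, whereas your route through the second supplement $\eta_{_\K}(2)=(-1)^{(q^2-1)/8}$ together with the observation that $\eta_o(-1)=1$ iff $2d\mid q-1$ would also close it. That alternative is fine in principle, but as written you never carry it out, and with the case labels inverted the bookkeeping you describe would not assemble into a proof without substantial repair.
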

\begin{proof} Assume first that $d$ is odd. Using a similar argument to the one we have used in Lemma \ref{twistprod} we obtain
$$\prod_{\eta \in \widehat{\K^*/{\K^*}^d}} G(\eta_{_\K}\eta,\psi')=G(\eta_{_\K},\psi')\prod_{1\neq \eta \in \widehat{\K^*/{\K^*}^d}} G(\eta_{_\K}\eta,\psi')=G(\eta_{_\K},\psi')q^{\frac {d-1}{2}}\eta_{_\K}(-1)^{\frac {d-1}{2}}$$
Also, by \eqref{gausspsi} we have  $$G(\eta_{_\K}^d,\psi'_d)=G(\eta_{_\K},\psi'_d)=\eta_{_\K}(d)G(\eta_{_\K},\psi').$$
The first assertion now follows from Hasse-Davenport product relation \eqref{hd} along with Lemma \ref{twistprod}.

Assume now that $d$ is even and write $d=2^kd'$ where $k>0$ and $d'$ is odd. If $k=1$ we have $\eta_{_\K}(2d)=\eta_{_\K}(d')$ and
$$\eta_o(-1)^{\frac{d(d-2)}{8}}={\bigl(\eta_o(-1)^{\frac d 2}\bigr)}^{\frac{(d-2)}{4}}=\eta_{_\K}(-1)^{\frac{(d'-1)}{2}}.$$
Since $d'$ is odd and $\mu_{d'} \subseteq \K$ the case $k=1$ follows from the first assertion proven in the Lemma. Suppose now that $k=2$. In this case $\frac{d(d-2)}{8}$ is odd and $sign(\K)=1$. Using the first assertion once more we deduce that $\eta_{_\K}(d')=1$. Thus we need to show that $\eta_o(-1)=\eta_{_\K}(2)$. Since $\eta_o(-1)=1$ if and only if $-1\in \mu_d$ it is left to prove that $2\in {\K^*}^2$ if and only if $\mu_8 \subseteq \K.$ Let $i\in {\K^*}$ be a primitive fourth root of 1. Using the identity $(1+i)^2=2i$ we deduce that $2\in {\K^*}^2$ if and only if $i\in {\K^*}^2.$ Finally we prove the case $k\geq 3$. In this case $\frac{d(d-2)}{8}$ is even and as in the $k=2$ case $\eta_{_\K}(d')=1$. Hence it is sufficient to show that  $2\in {\K^*}^2$. This follows since $\mu_8 \subseteq \K.$
\end{proof}

\subsection{p-adic fields} \label{pfield}
Let $\F$ be a finite extension of $\Q_p$. Denote by $\Of$ its ring of integers and fix $\varpi$, a generator of $\Pf$, the maximal ideal of $\Of$. Let  $\K=\Of / \Pf$  be the residue field of $\F$ and let $q$ be its cardinality. Note that $\K^*=\Of^*/ 1+\Pf$. We normalize the absolute value on $F$ so that $\ab \varpi \ab =q^{-1}$.

In this paper, $\psi$ will denote  a non-trivial character of $F$ and $\chi$ will denote a character of $\F^*$. We define  $e(\psi)$ to be the minimal number $n\in \Z$ such that $\psi(\Pf^n)=1$. We note that if $a\in \Of^*$ then $e(\psi)=e(\psi_a)$. In particular, if $d\in \Z$ is relatively prime to the residual characteristic of $F$ then $e(\psi)=e(\psi_d)$. $\psi$ is said to be normalized if $n=0$. We note that $\psi_{\varpi^{e(\psi)}}$ is always normalized.

The map $x\mapsto \psi(x\varpi^{e(\psi)-1})$ is identically 1 on $\Pf$ but not on $\Of$. { Therefore} it defines a non-trivial character  of $\K$. We shall denote it by $\psi'$. By an abuse of language we call $\psi'$ the restriction of $\psi$ to $\K$. We set $G_{\psi}(\K)=G_{\psi'}(\K)$.

For a ramified character $\chi$ of $F^*$ we denote by $e(\chi)$ the minimal number $m\in \N$ such that $\chi(1+\Pf^m)=1$. If $\chi$ in unramified we set $e(\chi)=0$.

If  $e(\chi)\leq 1$  than $x\mapsto \chi(x)$ defines a character $\chi'$ of $\K^*$.  By an abuse of Language we say that $\chi'$ is the restriction of $\chi$. Note that $\chi'$ is trivial if and only if $\chi$ is unramified. Conversely, if $\chi'$ is a character of  $\K^*$ then it may be pulled back to a character of $\Of^*$. We may further extend it to a character of $F^*$ by defiling it arbitrarily on $\varpi$. If $\chi'$ is non-trivial then the conductor of the lifted character is 1.

\begin{lem} \label{Jlemma} Suppose that $d\in \N$ is relatively prime to the residual characteristic of $F$.
\begin{enumerate}
\item $1+\Pf^* \subseteq {F^*}^d.$ Consequently, if $\eta \in \widehat{F^*/{F^*}^d}$ then $e(\eta)\leq 1$ and the restriction of $\eta$ to $\Of^*$ is a pull back of an element of $\widehat{\K^*/{\K^*}^d}$.
\item If $\chi^d$ is ramified then for any $\eta \in  \widehat{F^*/{F^*}^d}$  we have $e(\chi)={e}(\chi^d)=e(\eta\chi)$.
\item $\mu_d(F) \subseteq F$ if and only if $\mu_d(\K) \subseteq \K$ and in this case $[F^*:{F^*}^d]=d^2$. In particular, if the residual characteristic of $F^*$ is odd then $sign(F)=sign(\K)$ and $[F^*:{F^*}^2]=4$.
\item If $\mu_d \subseteq F$ then there exists a subgroup $J$ of $\widehat{F^*/{F^*}^d}$ such that the restriction to $\Of^*$ defines an isomorphism from $J$ to $\widehat{\K^*/{\K^*}^d}.$
\end{enumerate}
\end{lem}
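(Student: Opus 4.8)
The plan is to reduce all four assertions to two elementary facts about the local field $F$, each a consequence of the hypothesis $\gcd(d,p)=1$ where $p$ is the residual characteristic: \textbf{(i)} Hensel's lemma applies to $X^d-a$ for every $a\in\Of^*$ (the derivative $dX^{d-1}$ is a unit at any approximate root), so $(1+\Pf)^d=1+\Pf$ and reduction modulo $\Pf$ gives a bijection from the $d$-th roots of unity in $F$ onto those in $\K$; and \textbf{(ii)} $1+\Pf^m$ is a pro-$p$ group for every $m\ge1$, hence contains no element of order prime to $p$ and admits no nontrivial continuous homomorphism into a finite group of order prime to $p$ (in particular into a subgroup of $\mu_d(\C)$). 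I will also use $F^*=\varpi^{\Z}\times\Of^*$ and the Teichm\"uller decomposition $\Of^*=\mu_{q-1}(F)\times(1+\Pf)$. For (1): by fact (i), $1+\Pf=(1+\Pf)^d\subseteq{F^*}^d$; hence any $\eta\in\widehat{F^*/{F^*}^d}$ is trivial on $1+\Pf$, so $e(\eta)\le1$, and it is trivial on ${\Of^*}^d$, whose image under the surjection $\Of^*\to\K^*=\Of^*/(1+\Pf)$ is ${\K^*}^d$, so $\eta|_{\Of^*}$ descends to an element of $\widehat{\K^*/{\K^*}^d}$.

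For (2): if a character $\rho$ of $F^*$ has $\rho^d$ ramified, then $\rho$ is ramified (otherwise $\rho^d$ would be trivial on $\Of^*$), and $e(\rho)=e(\rho^d)$, because for every $m\ge1$ we have $\rho(1+\Pf^m)=1\Leftrightarrow\rho^d(1+\Pf^m)=1$ --- the forward implication is trivial, and conversely $\rho(1+\Pf^m)$ would be a finite subgroup of $\mu_d(\C)$, of order prime to $p$, hence trivial by fact (ii); so the defining minimum is the same for $\rho$ and $\rho^d$. Since $\eta^d=1$ for $\eta\in\widehat{F^*/{F^*}^d}$ we have $(\eta\chi)^d=\chi^d$, and applying the above to $\rho=\chi$ and to $\rho=\eta\chi$ gives $e(\chi)=e(\chi^d)=e((\eta\chi)^d)=e(\eta\chi)$. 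For (3): by fact (i) reduction carries the $d$-th roots of unity of $F$ bijectively onto those of $\K$, so $F$ contains a full copy of $\mu_d$ iff $\K$ does, both conditions being $d\mid q-1$; and when $d\mid q-1$, fact (i) gives $(1+\Pf)^d=1+\Pf$ and $[\mu_{q-1}(F):\mu_{q-1}(F)^d]=\gcd(d,q-1)=d$, so $[\Of^*:{\Of^*}^d]=d$ and $[F^*:{F^*}^d]=[\Z:d\Z]\cdot[\Of^*:{\Of^*}^d]=d^2$. For $p$ odd and $d=2$ this yields $[F^*:{F^*}^2]=4$, and $-1\in{F^*}^2\Leftrightarrow-1\in{\Of^*}^2\Leftrightarrow-1\in{\K^*}^2$ (the last equivalence by fact (i)), i.e. $sign(F)=sign(\K)$.

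For (4): by (1), restriction to $\Of^*$ followed by descent to $\K^*$ defines a homomorphism $r:\widehat{F^*/{F^*}^d}\to\widehat{\K^*/{\K^*}^d}$. Take $J$ to be the set of $\eta\in\widehat{F^*}$ with $\eta(\varpi)=1$ whose restriction to $\Of^*$ is the pullback of an element of $\widehat{\K^*/{\K^*}^d}$; exactly as in (1), $J\subseteq\widehat{F^*/{F^*}^d}$, and $J$ is visibly a subgroup. Then $r|_J$ is surjective (pull an element of $\widehat{\K^*/{\K^*}^d}$ back to $\Of^*$ and extend by $\eta(\varpi)=1$) and injective (if $r(\eta)=1$ and $\eta(\varpi)=1$ then $\eta$ is trivial on $\Of^*$ and on $\varpi^{\Z}$, hence on $F^*$), so $r|_J$ is the desired isomorphism. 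The only point needing care anywhere --- the ``main obstacle'' --- is keeping track of which half of each equivalence is supplied by Hensel's lemma (lifting information from the residue field) and which by the pro-$p$ structure of $1+\Pf$ (killing prime-to-$p$ torsion upstairs); once facts (i) and (ii) are isolated, each of (1)--(4) is a short verification, so I would present (i)--(ii) as a brief preamble and then argue (1)--(4) essentially as above.
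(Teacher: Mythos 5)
Your proof is correct. Note, however, that the paper does not actually prove this lemma: for items (1)--(3) it simply cites Lemma 4.15 of \cite{Sz19}, and for item (4) it points to the paragraph preceding the lemma (which observes that a non-trivial character of $\K^*$ pulls back to $\Of^*$ and extends to $F^*$ by an arbitrary choice of value at $\varpi$ --- exactly your construction of $J$ with $\eta(\varpi)=1$). So what you have done is supply the standard self-contained argument that the paper delegates to references: isolating Hensel's lemma for $X^d-a$ (giving $(1+\Pf)^d=1+\Pf$ and the bijection $\mu_d(F)\to\mu_d(\K)$) and the pro-$p$ structure of $1+\Pf^m$ (killing prime-to-$p$ torsion, which is precisely what makes $e(\rho)=e(\rho^d)$ work in item (2)) is the right way to organize it, and each of the four items then reduces to a short verification as you say. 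One incidental bonus of your argument: item (4) as you prove it does not actually need the hypothesis $\mu_d\subseteq F$ --- the restriction map from your $J$ is a bijection onto $\widehat{\K^*/{\K^*}^d}$ in general; the hypothesis only ensures that this common group has $d$ elements, which is how the lemma is used later.
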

\begin{proof}  These are well known. For the first three items  see for example Lemma 4.15 in \cite{Sz19}. The last item follows at once from the paragraph preceding this lemma.
\end{proof}

We shall refer to the group $J$ whose existence was proven in the last item of the Lemma as a lift of $\widehat{\K^*/{\K^*}^d}$. We note this group is not unique and that one standard choice of a generator $\eta_o$ of $J$ is $x\mapsto (\varpi,x)_d$ where $(\cdot,\cdot)_d$ is the $d^{th}$ power Hilbert symbol. We shall use this choice only in Lemma \ref{goramlcm}.


\subsection{ $\tau$ and its basic properties}
If $\chi$ is ramified  then $x\mapsto \psi \bigl(x\varpi^{e(\psi)-e(\chi)}\bigr)$ and $x\mapsto \chi(x)$ are well defined maps on\\ $\Of^*/1+\Pf^{e(\chi)}$ so we may define

$$\tau(\chi,\psi)=q^{\frac{-e(\chi)}{2}} \begin{cases} 1  & \chi \, \operatorname{is} \,  \operatorname{unramified} ;\\ \sum_{a\in \Of^*/1+\Pf^{e(\chi)}} \chi(a)\psi(\varpi^{e(\psi)-e(\chi)}a)  & \chi \, \operatorname{is} \,  \operatorname{ramified}. \end{cases}$$
Note that for any $\chi$ and $\psi$ we have
\begin{equation} \label{tauunc} \tau(\chi,\psi)=\tau(\chi,\psi_{\varpi^k}).\end{equation}
Thus it is sufficient to study $\tau(\chi,\psi)$ for normalized characters only. Observe also that if  $e(\chi)=1$ then
\begin{equation} \label{gautau} \tau(\chi,\psi)=q^{-\half}G(\chi',\psi') \end{equation}
where $\chi'$ is the restriction of $\chi$ to $\K^*$ and $\psi'$ is the restriction {of} $\psi$ to $\K$.

\begin{cor} \label{hdtau1} Assume that the residual characteristic of $F$ is odd. Assume also that $d$ is relatively prime to the residual characteristic of  $F$ and that $\mu_d \subseteq F$. Let $J \in\widehat{ F^*/{F^*}^d}$ be a lift of $\widehat{ {\K}^*/{\K^*}^d}$. Assume that $e(\chi)\leq 1$. Then
$$\prod_{\eta \in J}\tau(\chi \eta,\psi)=  \tau(\chi^d,\psi_d) \begin{cases} 1 & d \, \operatorname{is} \,  \operatorname{odd} ;\\  \eta_{_\K}(2d)q^{-\half}G_\psi(\K)  &  d \, \operatorname{is} \,  \operatorname{even} \end{cases}$$
\end{cor}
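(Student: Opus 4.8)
The plan is to split into two cases according to whether the character $\chi'^d$ of $\K^*$ is trivial, where $\chi'$ denotes the restriction of $\chi$ to $\K^*$ (well defined since $e(\chi)\le 1$); recall $\chi^d$ is ramified exactly when $\chi'^d\neq 1$, and that $\mu_d\subseteq F$ forces $\mu_d\subseteq\K$ by Lemma \ref{Jlemma}(3), so the classical Hasse--Davenport relation \eqref{hd} over $\K$ is at our disposal. Throughout I will use that restriction to $\Of^*$ carries $J$ isomorphically onto $\widehat{\K^*/{\K^*}^d}$, and that the residue-field character attached to $\psi_d$ is $\psi'_d$, which is immediate from $e(\psi_d)=e(\psi)$ and the definition of the restriction.

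Suppose first $\chi'^d\neq 1$. By Lemma \ref{Jlemma}(2) we get $e(\chi)=e(\chi^d)=e(\eta\chi)=1$ for every $\eta\in J$, so \eqref{gautau} gives $\tau(\chi\eta,\psi)=q^{-1/2}G(\chi'\eta',\psi')$ with $\eta'$ the restriction of $\eta$. As $\eta$ runs through $J$, the characters $\chi'\eta'$ run once each through the coset $\chi'\,\widehat{\K^*/{\K^*}^d}$, which misses the trivial character since $\chi'\notin\widehat{\K^*/{\K^*}^d}$; hence \eqref{hd} applies to $\chi'$. Substituting \eqref{hd} and then Lemma \ref{twistprod} into $\prod_{\eta\in J}\tau(\chi\eta,\psi)=q^{-d/2}\prod_{\eta'}G(\chi'\eta',\psi')$, and using $\tau(\chi^d,\psi_d)=q^{-1/2}G(\chi'^d,\psi'_d)$ (valid because $e(\chi^d)=1$), reduces the identity to matching powers of $q$; in the even case one additionally rewrites $\eta_o(-1)^{d(d-2)/8}$ as $\eta_{_\K}(2d)$ via Lemma \ref{d and sign}(2).

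Suppose now $\chi'^d=1$, so $\chi^d$ is unramified and $\tau(\chi^d,\psi_d)=1$. In this case $\{(\chi\eta)':\eta\in J\}$ is the whole group $\widehat{\K^*/{\K^*}^d}$, each element occurring once; the unique $\eta\in J$ for which $(\chi\eta)'$ is trivial makes $\chi\eta$ unramified and thus contributes $\tau(\chi\eta,\psi)=1$, while the remaining $d-1$ twists contribute $q^{-1/2}G(\zeta',\psi')$ for $\zeta'$ ranging over the non-trivial elements of $\widehat{\K^*/{\K^*}^d}$. Therefore $\prod_{\eta\in J}\tau(\chi\eta,\psi)=q^{-(d-1)/2}\prod_{1\neq\zeta'}G(\zeta',\psi')$, and Lemma \ref{twistprod} together with Lemma \ref{d and sign}(2) yields precisely the right-hand side of the corollary.

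Almost all of this is bookkeeping layered on the classical relation and on Lemmas \ref{twistprod} and \ref{d and sign}; the points I expect to need the most care — rather than any deep obstacle — are correctly identifying the residue-field data of $\psi_d$ and of each twist $\chi\eta$, spotting that exactly one twist becomes unramified in the second case so that its $\tau$ equals $1$ and not $q^{-1/2}G(1,\psi')$, and carrying the even-$d$ sign through Lemma \ref{d and sign}(2).
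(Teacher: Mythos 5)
Your proposal is correct and follows essentially the same route as the paper: reduce each $\tau$ to a residue-field Gauss sum via \eqref{gautau}, apply the classical relation \eqref{hd} when $\chi'^d\neq 1$, and handle the case $\chi'^d=1$ by noting the twists sweep out $\widehat{\K^*/{\K^*}^d}$ with the single unramified twist contributing $1$, finishing with Lemma \ref{twistprod} and Lemma \ref{d and sign}(2). The subtleties you flag (the residue-field character of $\psi_d$, and that the unramified twist gives $\tau=1$ rather than $q^{-\half}G(1,\psi')$) are exactly the right ones and are handled correctly.
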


\begin{proof}
If $\chi^d$ is unramified then the restriction of $\chi$ to $\Of^*$ lies in $J$. Therefore
 $$\prod_{\eta \in J}\tau(\chi \eta,\psi)=\prod_{\eta \in J}\tau(\eta,\psi)=\prod_{1 \neq \eta \in J}\tau(\eta,\psi).$$
Since $\tau(\chi^d,\psi_d)=1$ the lemma now follows from \eqref{gautau} along with Lemma \ref{twistprod}  and the second item in Lemma \ref{d and sign}.
Assume that  $\chi^d$ is ramified. In this case, by using   \eqref{gautau} and  the second item in Lemma \ref{d and sign} once more the Lemma follows the Hasse-Davenport product relation \eqref{hd}.
\end{proof}

\subsection{Tate $\gamma$ and $\epsilon$-factors.} \label{tp}
In this Section we recall the definition of Tate $\gamma$ and $\epsilon$-factors along with some of their basic properties. The standard references are \cite{T} and \cite{Tate79}.

Let $S(F)$ be the space of Schwartz functions on $\F$.  For $\phi \in S(F)$ let $\phi_\psi \in S(F)$  be its $\psi$-Fourier transform, i.e.,
$$\phi_\psi(x)=\int_F \phi(y) \psi(xy) \, d_\psi y.$$
Here $d_\psi y$ is the $\psi$-self dual Haar measure on $F$. In other words, $d_\psi y$ is the unique  Haar measure on $F$ such that $\left(\phi_\psi \right)_{{\psi}}(x)=\phi(-x)$ for all $\phi \in S(F), x\in \F$. If $\psi$ is normalized then $\int_{\Pf^m}\, d_\psi x=q^{-m}.$ We set $d^*_\psi x=\frac {d_\psi x}{\ab x \ab}.$ It is a Haar measure on $F^*$.

$F^*$ acts on $S(F)$ and on $\widehat{S(F)}$ { in} the same way as in \eqref{actonhat}. The space $\widehat{S(F)}_{\chi}$  of $\chi$ equivariant functionals on $S(F)$, is one dimensional, see \cite{Kudla} for example. Note that contrary to the finite field case, this uniqueness holds also for the trivial character. For $s\in \C$ define $\chi_s$  to be the unramified twist of $\chi$   given by
$$x \mapsto \ab x \ab^s\chi(x).$$
If $Re(s)>>0$ then for all $\phi \in S(F)$,
$$\int_{F^*} \phi(x) \chi_s(x) \, d^*x$$
converges to a rational function in $q^{-s}$.  Its meromorphic continuation is denoted by $\zeta(s,\chi,\phi).$
This function has ${L(s,\chi)}^{-1}$ as  "common denominator"  where the local $L-$function is defined by $$L(s,\chi)=\begin{cases}  \frac {1} {1-q^{-s}\chi(\varpi)}\ & \chi \, \operatorname{is} \,  \operatorname{unramified} ;\\  1  &  \operatorname{otherwise.} \end{cases}$$
In other words, $$\phi \mapsto {L(s,\chi)}^{-1} \zeta(s,\chi,\phi)$$
is a non-zero element in $\widehat{S(F)}_{\chi_s}$.
Since $$\phi \mapsto {L(1-s,\chi^{-1})}^{-1} \zeta(1-s,\chi^{-1},\phi_\psi)$$
is another non-zero element in  $\widehat{S(F)}_{\chi_s}$  there exists a non-zero
monomial function in $q^{-s}$  denoted by $\epsilon(s,\chi,\psi)$
such that for all $\phi \in S(F)$,
\begin{equation} \label{funeqe} {L(1-s,\chi^{-1})}^{-1}\zeta(1-s,\chi^{-1}, \phi_\psi)=\epsilon(s,\chi,\psi){L(s,\chi)}^{-1}\zeta(s,\chi,\phi). \end{equation}
Tate $\gamma$-factor is defined by
\begin{equation} \label{funeqg} \gamma(s,\chi,\psi)=\epsilon(s,\chi,\psi)\frac{L(1-s,\chi^{-1})}{L(s,\chi)}. \end{equation}
 So it  satisfies the  functional equation

$$ \zeta(1-s,\chi^{-1},\phi_\psi)=\gamma(s,\chi,\psi)\zeta(s,\chi,\phi).$$
The following are easy and well known properties of $\epsilon$-factors: $\epsilon(s,\eta,\psi)$=1 if $\eta$ is unramified and $\psi$ is normalized. Also,
\begin{eqnarray} \label{Tate gamma} {\epsilon}(1-s,\chi^{-1},\psi) &=& \chi(-1)\epsilon(s,\chi,\psi)^{-1}, \\
\label{changepsi}\epsilon(s,\chi,\psi_a) &=& \chi(a)\ab a \ab^{s-\half}\epsilon(s,\chi,\psi), \\
\label{epsilon old twist} \epsilon(s+t,\chi,\psi) &=& q^{\bigl(e(\psi)-e(\chi)\bigr)t}\epsilon(s,\chi,\psi),\\
\label{epsilon twist} \epsilon(s,\chi\eta,\psi) &=&  \eta(\varpi)^{e(\chi)-e(\psi)}\epsilon(s,\chi,\psi),\end{eqnarray}
where $\eta$ is an unramified character.

If $\chi$ is ramified then \eqref{funeqe} and \eqref{funeqg} coincide and  $\gamma(s,\chi,\psi)=\epsilon(s,\chi,\psi)$. Hence, in these cases, the definition of  $\epsilon(s,\chi,\psi)$ and $\epsilon'(\chi',\psi')$ are {completely} analogues. Furthermore, as we shall now show, $\epsilon'${-factors} are special values of $\epsilon$-factors.

\begin{prop} \label{eptau}
$$\epsilon(1-s,\chi^{-1},\psi)= \bigl(\chi(\varpi)q^{\half-s} \bigr)^{e(\psi)-e(\chi)} \tau(\chi,\psi).$$
\end{prop}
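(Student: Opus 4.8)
The plan is to evaluate both sides of the proportionality \eqref{funeqe} at a cleverly chosen test function $\phi$ and read off $\epsilon(1-s,\chi^{-1},\psi)$ directly. Because the $\epsilon$-factor is a monomial in $q^{-s}$, it suffices to compute it at a single convenient value, or equivalently to exploit the homogeneity relations \eqref{epsilon old twist} and \eqref{changepsi} together with one honest integral computation. First I would reduce to the case that $\psi$ is normalized: using \eqref{changepsi} with $a = \varpi^{e(\psi)}$ and \eqref{tauunc}, both sides transform the same way (one checks the powers of $\chi(\varpi)$ and $q$ match), so we may assume $e(\psi) = 0$ and then the claimed formula reads $\epsilon(1-s,\chi^{-1},\psi) = \bigl(\chi(\varpi)q^{1/2-s}\bigr)^{-e(\chi)}\tau(\chi,\psi)$.

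Next I would split into the unramified and ramified cases. If $\chi$ is unramified, then $\tau(\chi,\psi) = 1$ and $e(\chi) = 0$, so the identity asserts $\epsilon(1-s,\chi^{-1},\psi) = 1$, which is exactly the recorded fact that $\epsilon(s,\eta,\psi) = 1$ for $\eta$ unramified and $\psi$ normalized (applied to $\eta = \chi^{-1}$, using $1-s$ in place of $s$ and that $\chi^{-1}$ is still unramified). If $\chi$ is ramified, then $L(s,\chi) = L(1-s,\chi^{-1}) = 1$, so \eqref{funeqe} becomes simply $\zeta(1-s,\chi^{-1},\phi_\psi) = \epsilon(s,\chi,\psi)\,\zeta(s,\chi,\phi)$, and by \eqref{Tate gamma} we have $\epsilon(1-s,\chi^{-1},\psi) = \chi(-1)\epsilon(s,\chi,\psi)^{-1}$; alternatively one works directly with the functional equation for $\chi^{-1}$. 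I would take $\phi = \mathbf{1}_{1+\Pf^{e(\chi)}}$, the indicator of the coset $1 + \Pf^{e(\chi)}$. Then $\zeta(s,\chi,\phi) = \int_{1+\Pf^{e(\chi)}} \chi(x)|x|^s\,d^*x = \chi(1)\cdot\mathrm{vol}(1+\Pf^{e(\chi)}) = q^{-e(\chi)}$ (with the $\psi$-self-dual, hence on $\Of^*$ suitably normalized, measure), since $\chi$ and $|\cdot|^s$ are trivial on $1+\Pf^{e(\chi)}$. The main work is computing the Fourier transform $\phi_\psi(x) = \int_F \mathbf{1}_{1+\Pf^{e(\chi)}}(y)\psi(xy)\,d_\psi y = \psi(x)\,\mathbf{1}_{\Pf^{-e(\chi)}}(x)\cdot q^{-e(\chi)}$ (a Gauss-sum-type exponential sum localization), and then evaluating $\zeta(1-s,\chi^{-1},\phi_\psi) = q^{-e(\chi)}\int_{F^*}\chi^{-1}(x)|x|^{1-s}\psi(x)\mathbf{1}_{\Pf^{-e(\chi)}}(x)\,d^*x$; the standard conductor argument shows the integrand is supported on $|x| = q^{e(\chi)}$, i.e. $x \in \varpi^{-e(\chi)}\Of^*$, yielding a finite sum over $\Of^*/(1+\Pf^{e(\chi)})$ that is precisely $q^{-e(\chi)}\cdot\chi(\varpi^{-e(\chi)})q^{(1-s)e(\chi)}q^{-e(\chi)}\cdot\bigl(\text{the sum }\sum_a \chi^{-1}(a)\psi(\varpi^{-e(\chi)}a)\bigr)$.

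Dividing, $\epsilon(1-s,\chi^{-1},\psi)$ (via the $\chi^{-1}$ functional equation) equals $\chi(\varpi)^{-e(\chi)}q^{(1-s)e(\chi)}q^{-e(\chi)}\sum_{a\in\Of^*/(1+\Pf^{e(\chi)})}\chi^{-1}(a)\psi(\varpi^{-e(\chi)}a)$, and recognizing $q^{-e(\chi)/2}\sum_a \chi^{-1}(a)\psi(\varpi^{-e(\chi)}a)$ — after the substitution $a \mapsto a^{-1}$ or comparison with the definition, noting $e(\psi) = 0$ — as $\tau(\chi,\psi)$ (possibly up to the $\chi(-1)$ bookkeeping from \eqref{Tate gamma}, which I would track carefully), one collects the powers of $q$: $q^{(1-s)e(\chi)}q^{-e(\chi)}q^{-e(\chi)/2} = q^{(1/2-s)e(\chi)} = \bigl(q^{1/2-s}\bigr)^{-(e(\psi)-e(\chi))}$ since $e(\psi) = 0$. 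This gives the normalized case, and undoing the reduction restores the general $\psi$. The main obstacle is purely bookkeeping: getting the normalization of the Haar measure $d^*x$ on $\Of^*$ right, correctly tracking the inversions $\chi \leftrightarrow \chi^{-1}$ and $s \leftrightarrow 1-s$ forced by using \eqref{funeqe} versus the $\gamma$-factor functional equation, and matching the sign $\chi(-1)$ — none of it is conceptually hard, but it is easy to drop a factor. As a sanity check I would verify the $e(\chi) = 1$ specialization against \eqref{gautau} and \eqref{finiteeps}, which must reproduce $\epsilon(1-s,\chi^{-1},\psi) = (\chi(\varpi)q^{1/2-s})^{-1}q^{-1/2}G(\chi',\psi')$.
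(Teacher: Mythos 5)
Your strategy is the same as the paper's: reduce to normalized $\psi$ via \eqref{changepsi} and \eqref{tauunc}, dispose of the unramified case using $\epsilon(s,\eta,\psi)=1$, and in the ramified case evaluate the functional equation on an explicit test function supported near $1+\Pf^{e(\chi)}$. The only difference is that the paper cites the resulting integral formula from Tate's Corvallis notes, whereas you derive it by hand with $\phi=\mathbf{1}_{1+\Pf^{e(\chi)}}$; your computations of $\zeta(s,\chi,\phi)$ and of $\phi_\psi$ are correct.

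However, the final bookkeeping step, which you yourself flag as delicate, is actually wrong as written, and it is not merely a dropped constant. Plugging $\phi$ into \eqref{funeqe} as stated computes $\epsilon(s,\chi,\psi)$, namely
$\epsilon(s,\chi,\psi)=\chi(\varpi)^{e(\chi)}q^{(1-s)e(\chi)}q^{-e(\chi)}\sum_{a}\chi^{-1}(a)\psi(\varpi^{-e(\chi)}a)$,
and this is \emph{not} $\epsilon(1-s,\chi^{-1},\psi)$: passing between the two via \eqref{Tate gamma} requires inverting the Gauss sum (i.e.\ the $\tau$-analogue of \eqref{gaussinv}), which you have not done. Your displayed formula for $\epsilon(1-s,\chi^{-1},\psi)$ is a hybrid of the two: it carries the $\chi^{-1}$-sum and the exponent $(1-s)e(\chi)$ from the $(\chi,s)$ functional equation but the prefactor $\chi(\varpi)^{-e(\chi)}$ from the $(\chi^{-1},1-s)$ one, and the sum $q^{-e(\chi)/2}\sum_a\chi^{-1}(a)\psi(\varpi^{-e(\chi)}a)$ is $\tau(\chi^{-1},\psi)$, not $\tau(\chi,\psi)$ (the substitution $a\mapsto a^{-1}$ does not repair this). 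Consequently your collected power of $q$ comes out as $q^{(\half-s)e(\chi)}=\bigl(q^{\half-s}\bigr)^{-(e(\psi)-e(\chi))}$, which is the reciprocal of the required $\bigl(q^{\half-s}\bigr)^{e(\psi)-e(\chi)}$ --- and indeed your own $e(\chi)=1$ sanity check against \eqref{gautau} would catch this, since it demands $G(\chi',\psi')$, not $G(\chi'^{-1},\psi')$. The clean fix is to apply the functional equation directly to the pair $(\chi^{-1},1-s)$, i.e.\ use $\zeta(s,\chi,\phi_\psi)=\epsilon(1-s,\chi^{-1},\psi)\,\zeta(1-s,\chi^{-1},\phi)$ with the same $\phi$; then $\zeta(1-s,\chi^{-1},\phi)=q^{-e(\chi)}$ and the support argument gives $\zeta(s,\chi,\phi_\psi)=q^{-2e(\chi)}\chi(\varpi)^{-e(\chi)}q^{se(\chi)}\sum_a\chi(a)\psi(\varpi^{-e(\chi)}a)$, so that $\epsilon(1-s,\chi^{-1},\psi)=\bigl(\chi(\varpi)q^{\half-s}\bigr)^{-e(\chi)}\tau(\chi,\psi)$ drops out with no inversion and no $\chi(-1)$ to track.
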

\begin{proof} This is well known. The proof is included here for completeness. Suppose first that $\chi$ is unramified. Since $\psi_o=\psi_{\varpi^{e(\psi)}}$ is normalized it follows that $\epsilon(s,\chi,\psi_o)=1$. Hence, the assertion follows from  \eqref{changepsi}.

Suppose now that $\chi$ is ramified. Due to \eqref{changepsi} and to \eqref{tauunc} it is sufficient to prove the assertion for normalized $\psi$. In this case, from Page 14 of \cite{Tate79} it follows that
$$ \epsilon(1-s,\chi^{-1},\psi)=\bigl(\chi(\varpi)q^{1-s} \bigr)^{-e(\chi)} \int_{\Of^*} \chi(u)\psi(\varpi^{-e(\chi)}u) \, d_\psi u. $$
We write
$$\int_{\Of^*} \chi(u)\psi(\varpi^{-e(\chi)}u)= \sum_{a\in \Of^*/1+\Pf^{e(\chi)}} \int_{1+\Pf^{e(\chi)}} \chi(au)\psi(\varpi^{-e(\chi)}au) \, d_\psi u.$$
For $a\in \Of^*$, the maps $u\mapsto \chi(au)$ and $u\mapsto \psi(\varpi^{-e(\chi)}au)$ defined on $1+\Pf^{e(\chi)}$ are constant maps. Since $\int_{1+\Pf^{e(\chi)}}\, d_\psi u=q^{-e(\chi)}$ the assertion follows.
\end{proof}

Proposition \ref{eptau} and Equation \eqref{gautau} show that if $e(\chi)=1$ then $\epsilon(s, \chi,\psi)$ is essentially a Gauss sum. Moreover, let $\chi'$ be a non-trivial character of $\K^*$. Lift $\chi'$ to a character $\chi$ of $F^*$ by setting $\chi(\varpi)=1$. Let $\psi'$ be the restriction of $\psi$ to $\K$. By Proposition \ref{eptau} and by \eqref{gautau} $\epsilon(\half, \chi,\psi)=\epsilon'(\chi',\psi')$ (the appearance of the $\half$ here arises from the fact that the action of $F^*$ on $S(F)$ is not unitary).  We note that the analogy between  $\epsilon$ and $\epsilon'$ fails for unramified characters of $F^*$ as the uniqueness giving rise to $\epsilon'$ breaks for the trivial character of $\K$.

\subsection{Weil index and the metaplectic $\widetilde{\gamma}$-factor}

For $a \in F^*$ let $\eta_a$ be the quadratic character of $F^*$ whose kernel is $N\bigl(F(\sqrt{a})\bigl)$. The map $a\mapsto \eta_a$ is an isomorphism from $F^*/{F^*}^2$ to its dual and  $\eta_a(b)=\eta_b(a)$, see \cite{FV}  Chapter IV, Section 5 for example. Let
$$\gamma(\psi)=\ab 2 \ab^{\half} \lim_{r \rightarrow \infty} \int_{\Pf^{-r}} \psi(x^2) d_\psi x. $$
be the Weil index defined in \cite{Weil65}. It is an eighth root of 1.
For $a\in F^*$ define the normalized Weil index
$$\gamma_\psi(a)=\frac{\gamma(\psi_a)}{\gamma(\psi)}.$$
Observe that since $\gamma(\psi)^{-1}=\overline{\gamma(\psi)}=\gamma(\overline{\psi})=\gamma(\psi_{-1})$ it follows that $\gamma(\psi)^2=\gamma_\psi(-1)$.

It  was proven in Section 14 of \cite{Weil65} that $\gamma_\psi(ab)=\gamma_\psi(a)\gamma_\psi(b)\eta_a(b) $ but we shall not use directly this important property. By \cite{Kahn} and \cite{Kahn87}

\begin{equation} \label{kahn} \gamma_\psi(a)=\epsilon(\half,\eta_a,\psi_{-1})\end{equation}
(see also \cite{Sz9} for a simpler proof).

If the residual characteristic of $F$ is odd then $\eta_a$ is unramified is and only if the valuation of $a$ is even. In particular  $\eta_a(\varpi)=\eta_{_\K}(a)$  for all  $a\in \Of^*$.  It now follows from Proposition \ref{eptau} and \eqref{kahn} that in the odd residual characteristic case  for  $a\in \Of^*$ we have $\gamma_\psi(a)=\eta_{_\K}(a)^{e(\psi)}$. By the above, in this case we also have $\gamma(\psi)^4=\gamma_\psi(-1)^2=1$.

\begin{lem} \label{weild} Let $F$ be a p-adic field of odd residual characteristic and let $d$ be an  odd integer, relatively prime to the residual characteristic of  $F$. If $\mu_d \subseteq F$ then $\gamma(\psi)^d=\gamma(\psi_d).$
\end{lem}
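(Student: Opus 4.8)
The plan is to reduce the claim to the first part of Lemma \ref{d and sign} via the definition of the normalized Weil index. Since $\gamma_\psi(d) = \gamma(\psi_d)/\gamma(\psi)$ by definition, the asserted identity $\gamma(\psi)^d = \gamma(\psi_d)$ is equivalent, after dividing by the nonzero scalar $\gamma(\psi)$, to
$$\gamma(\psi)^{d-1} = \gamma_\psi(d).$$
So the whole matter comes down to evaluating the two sides of this equation separately.

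For the left-hand side I would use that $d$ is odd, so $d-1 = 2m$ with $m = (d-1)/2 \in \N$, together with the relation $\gamma(\psi)^2 = \gamma_\psi(-1)$ recorded in the excerpt; this gives $\gamma(\psi)^{d-1} = \gamma_\psi(-1)^{m}$. Now $d$ is a positive integer prime to the residual characteristic of $F$, hence a unit of $\Of$, and of course $-1 \in \Of^*$ as well. In the odd residual characteristic case we have the explicit formula $\gamma_\psi(a) = \eta_{_\K}(a)^{e(\psi)}$ for $a \in \Of^*$, which applies to both $a=-1$ and $a=d$. Combined with $sign(\K) = \eta_{_\K}(-1)$ this yields $\gamma(\psi)^{d-1} = sign(\K)^{m\,e(\psi)} = \bigl(sign(\K)^{(d-1)/2}\bigr)^{e(\psi)}$, whereas $\gamma_\psi(d) = \eta_{_\K}(d)^{e(\psi)}$.

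Hence it suffices to prove $sign(\K)^{(d-1)/2} = \eta_{_\K}(d)$, and then raise both sides to the $e(\psi)$-th power. But $\mu_d \subseteq F$ forces $\mu_d \subseteq \K$ by Lemma \ref{Jlemma}(3), and $\K$ has odd characteristic, so this is precisely the first assertion of Lemma \ref{d and sign}. This completes the argument.

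I do not expect any real obstacle: the proof is an assembly of facts already established in the excerpt. The substantive input is Lemma \ref{d and sign}(1), which in turn rests on the classical Hasse--Davenport product relation, and the odd-residual-characteristic hypothesis enters in two essential places — it ensures $\gamma(\psi)^4 = 1$ (so that the passage through $\gamma(\psi)^2 = \gamma_\psi(-1)$ and the even exponent $d-1$ is unambiguous) and it supplies the clean evaluation $\gamma_\psi(a) = \eta_{_\K}(a)^{e(\psi)}$ on units. The only mild care needed is the remark that $d$, being an integer coprime to the residual characteristic, is a unit of $\Of$, so that this last formula indeed applies to it.
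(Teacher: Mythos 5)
Your proof is correct and follows essentially the same route as the paper's: both reduce the claim to $\gamma(\psi)^{d-1}=\gamma_\psi(d)$, evaluate each side via $\gamma(\psi)^2=\gamma_\psi(-1)$ and the formula $\gamma_\psi(a)=\eta_{_\K}(a)^{e(\psi)}$ for units in odd residual characteristic, and conclude with the first item of Lemma \ref{d and sign}. Your added remark that $d$ is a unit of $\Of$ (so the formula applies to it) is a worthwhile clarification, but the argument is the same.
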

\begin{proof}
Since $\gamma(\psi)^2=\gamma_\psi(-1)$ we have $$\gamma(\psi)^d={\gamma(\psi)^{2}}^{\frac {d-1}{2}}\gamma(\psi)=\eta_{_\K}(-1)^{e(\psi)\frac {d-1}{2}}\gamma(\psi).$$
Thus,
$$\frac{\gamma(\psi_{d})}{\gamma(\psi^d)}=\eta_{_\K}(-1)^{e(\psi) \frac {d-1}{2}}\gamma_\psi(d)=\eta_{_\K}(-1)^{e(\psi)\frac {d-1}{2}}\eta_{_\K}(d)^{e(\psi)}.$$
With the first item in Lemma \ref{d and sign} we now finish.
\end{proof}
By modifying the functional equation giving rise to Tate $\gamma$-factor we have defined in \cite{Sz3} another factor, $\widetilde{\gamma}(s,\chi,\psi)$, which we call the metaplectic $\widetilde{\gamma}$-factor.  Similar to Tate $\gamma$-factor it is the meromorphic continuation of a certain principal value integral. This integral was computed in an unpublished note of J.Sweet, \cite{Sweet}. The computation was reproduced in the appendix of \cite{GoSz}:

$$ \widetilde{\gamma}(1-s,\chi^{-1},\psi)=\gamma(\psi) \chi(-1) \frac{ \gamma(s+\half,\chi,\psi)}{ \gamma(2s,\chi^{2},{\psi_{_2}})}$$
(we refer the reader also to \cite{Sz22} for a shorter proof of this identity). We now define
\begin{equation} \label{eptildef} \widetilde{\epsilon}(1-s,\chi^{-1},\psi)=\gamma(\psi) \chi(-1) \frac{ \epsilon(s+\half,\chi,\psi)}{ \epsilon(2s,\chi^{2},{\psi_{_2}})} \end{equation}
so that $$ \widetilde{\gamma}(1-s,\chi^{-1},\psi)= \widetilde{\epsilon}(1-s,\chi^{-1},\psi)\frac{L(\half-s,\chi^{-1})L(2s,\chi^{2})}{L(\half+s,\chi)L(1-2s,\chi^{-2})}.$$
Observe that if $\chi^2$ is ramified then $\widetilde{\epsilon}(1-s,\chi^{-1},\psi)=\widetilde{\gamma}(1-s,\chi^{-1},\psi).$
\begin{lem} \label{verifygao} (\cite{GSS2}, Corollary 4.5)
If $F$ has an odd residual characteristic, $\psi$ is normalized and $\chi^2$ is unramified then
$$\prod_{\beta \in\widehat{ F^* /{F^*}^2}} \widetilde{\gamma}(1-s,(\chi\beta)^{-1},\psi)=sign(F)\gamma(1-2s,\chi^{-2},\psi_{2})^2 \frac{L (2s,\chi^2 )L (-2s,\chi^{-2}\bigr)}{L (1-2s,\chi^{-2} )L (1+2s,\chi^2)}. $$
\end{lem}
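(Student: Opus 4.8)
The plan is to reduce the claimed identity, which involves metaplectic $\widetilde\gamma$-factors, to a product identity for Tate $\epsilon$ and $\gamma$-factors by feeding the Sweet/J.~Sweet formula \eqref{eptildef} into the left-hand side. Writing $d=2$ in the notation of the paper, the index set $\widehat{F^*/{F^*}^2}$ has $[F^*:{F^*}^2]=4$ elements by the third item of Lemma \ref{Jlemma}. For each $\beta$ in this group, the displayed formula before \eqref{eptildef} gives
$$\widetilde\gamma(1-s,(\chi\beta)^{-1},\psi)=\gamma(\psi)\,\chi\beta(-1)\,\frac{\gamma(s+\half,\chi\beta,\psi)}{\gamma(2s,\chi^2\beta^2,\psi_{_2})}.$$
Since $\beta^2$ is trivial, the denominator is $\gamma(2s,\chi^2,\psi_{_2})$, independent of $\beta$. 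So the left-hand side equals
$$\gamma(\psi)^4\Bigl(\prod_{\beta}\chi\beta(-1)\Bigr)\,\gamma(2s,\chi^2,\psi_{_2})^{-4}\;\prod_{\beta\in\widehat{F^*/{F^*}^2}}\gamma(s+\half,\chi\beta,\psi).$$

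Now I would evaluate the three scalar-type pieces. First, $\prod_\beta \chi\beta(-1)=\chi(-1)^4\prod_\beta\beta(-1)=\prod_\beta\beta(-1)$; grouping $\pm1$ according to whether $\beta(-1)=1$ exactly half the time (unless all four are trivial on $-1$, which happens iff $-1\in{F^*}^2$), this product is $1$ in both cases, or more carefully equals $sign(F)$ after combining with the Weil-index factor — I would track this sign together with $\gamma(\psi)^4$. In the odd residual characteristic case the discussion preceding Lemma \ref{weild} gives $\gamma(\psi)^4=\gamma_\psi(-1)^2=1$, so the whole prefactor $\gamma(\psi)^4\prod_\beta\chi\beta(-1)$ collapses to $\prod_\beta\beta(-1)$, which one checks is $sign(F)$: indeed if $-1\in{F^*}^2$ all four $\beta(-1)=1$ and the product is $1=sign(F)$, while if $-1\notin{F^*}^2$ then exactly two of the four characters are nontrivial on $-1$ — wait, that gives product $1$ too, so the $sign(F)$ must instead emerge from the main product below; I would locate it precisely there rather than here.

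The heart of the matter is the product $\prod_{\beta\in\widehat{F^*/{F^*}^2}}\gamma(s+\half,\chi\beta,\psi)$ in the case $\chi^2$ unramified. Here I would invoke the $d=2$ instance of the generalized Hasse–Davenport relation for $\gamma$-factors — that is, the $\gamma$-factor analog of \eqref{example} packaged in Theorem \ref{mainres}, whose ramified content rests on Corollary \ref{hdtau1}/Theorem \ref{tau theorem} applied with $d=2$. Converting $\gamma$ to $\epsilon$ via \eqref{funeqg} and Proposition \ref{eptau}, the product of the four twisted $\gamma(s+\half,\chi\beta,\psi)$ should collapse, up to an explicit power of $q$ and the even-$d$ Gauss-sum factor $\eta_{_\K}(2d)q^{-\half}G_\psi(\K)$ appearing in Corollary \ref{hdtau1} (with $d=2$ this is $\eta_{_\K}(4)q^{-\half}G_\psi(\K)=q^{-\half}G_\psi(\K)$), to an expression in $\gamma(2s,\chi^2,\psi_{_2})$ and $\gamma(1-2s,\chi^{-2},\psi)$. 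Using $G_\psi(\K)^2=q\cdot sign(\K)=q\cdot sign(F)$ from \eqref{quadinv} and Lemma \ref{Jlemma}, together with $\gamma(2s,\chi^2,\psi_{_2})\gamma(1-2s,\chi^{-2},\psi_{_2})=\chi^2(-1)=1$ (from \eqref{Tate gamma}) and the change-of-$\psi$ rule \eqref{changepsi} to pass between $\psi$ and $\psi_{_2}$, the four copies of $\gamma(2s,\chi^2,\psi_{_2})^{-1}$ and the product should recombine into $sign(F)\gamma(1-2s,\chi^{-2},\psi)^2$ times the ratio of $L$-factors $L(2s,\chi^2)L(-2s,\chi^{-2})/\bigl(L(1-2s,\chi^{-2})L(1+2s,\chi^2)\bigr)$, the $L$-factor bookkeeping coming from the definitions of $\widetilde\gamma$ versus $\widetilde\epsilon$ and of $\gamma$ versus $\epsilon$.

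I expect the main obstacle to be precisely this last bookkeeping: keeping the quadratic Gauss sum $G_\psi(\K)$, the various quartic signs $\gamma(\psi)^4$, $\chi(-1)^4$, $sign(F)$, $\eta_{_\K}(2)$, and the powers of $q$ from Proposition \ref{eptau} all aligned so that everything combines to the stated clean right-hand side, and in correctly tracking the $L$-factors when $\chi$ is unramified (so $L(s,\chi)\ne 1$) versus the generically ramified twists $\chi\beta$. A cross-check I would run at the end: specialize $\chi$ trivial and $\psi$ normalized and verify both sides reduce consistently, and separately verify the $q^{-s}$-degree (the monomial degree) of both sides matches using \eqref{epsilon old twist}.
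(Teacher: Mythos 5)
The paper itself gives no proof of this lemma---it is quoted from \cite{GSS2}, Corollary 4.5---so your attempt can only be judged on its own merits. Your opening moves are fine: substituting the formula preceding \eqref{eptildef} for each $\widetilde\gamma$, noting $\beta^2=1$ so the denominators all equal $\gamma(2s,\chi^2,\psi_{_2})$, using $\gamma(\psi)^4=1$, and observing that $\prod_\beta\chi\beta(-1)=1$. But the step you yourself call ``the heart of the matter,'' the evaluation of $\prod_{\beta\in\widehat{F^*/{F^*}^2}}\gamma(s+\half,\chi\beta,\psi)$, is never carried out, and the tool you propose for it would fail. Theorem \ref{mainres} (and the $\gamma$-factor analog of \eqref{example}) requires $\chi^d$ to be \emph{ramified}, whereas the present lemma assumes $\chi^2$ is \emph{unramified}; the paper explicitly stresses at the end of Section \ref{mainsec} that those identities are false for unramified characters. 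Corollary \ref{hdtau1} does cover the unramified case, but only for a product over a two-element lift $J$ of $\widehat{\K^*/{\K^*}^2}$, not over the full four-element group $\widehat{F^*/{F^*}^2}$ appearing here, so it does not apply either. Finally, you never locate the source of $sign(F)$: you correctly find that $\prod_\beta\beta(-1)=1$ and then defer the question.

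What actually closes the argument is elementary and uses no Hasse--Davenport input. Since $\chi^2$ is unramified, the four twists $\chi\beta$ are exactly $\chi_u,\ \chi_u\eta',\ \chi_u\eta,\ \chi_u\eta\eta'$, with $\chi_u$ unramified, $\eta'$ the unramified quadratic character and $\eta$ the ramified quadratic character. For the two unramified twists, $\gamma$ is a pure $L$-ratio (as $\psi$ and $\psi_{_2}$ are normalized the $\epsilon$'s are $1$), and $L(\half+s,\chi_u)L(\half+s,\chi_u\eta')=L(1+2s,\chi^2)$, similarly for $\half-s$; this produces the $L$-factors on the right. For the two ramified twists, $\gamma=\epsilon$, and by \eqref{epsilon twist} they differ by $\eta'(\varpi)^{e(\chi_u\eta)-e(\psi)}=-1$, so their product is $-\epsilon(s+\half,\chi_u\eta,\psi)^2$, which by Proposition \ref{eptau}, \eqref{quadgauss}, \eqref{quadinv} and Lemma \ref{Jlemma} equals $-\chi^2(\varpi)q^{-2s}\,sign(F)$; this is where $sign(F)$ enters, matching the factor $L(-2s,\chi^{-2})/L(2s,\chi^2)=-\chi^2(\varpi)q^{-2s}$ hidden in the right-hand side. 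Combining these with the four copies of $\gamma(2s,\chi^2,\psi_{_2})^{-1}=L(2s,\chi^2)/L(1-2s,\chi^{-2})$ gives the stated identity. Since your proposal neither performs this computation nor supplies a valid substitute for it, the proof as written has a genuine gap.
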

{(according to  \cite{GSS2}, Corollary 4.5 we should have  $\psi$ rather than $\psi_2$ in the right hand side. The change is justified by \eqref{changepsi}).}

\section{Relations between $\psi$ and some restrictions of $\chi$} \label{reschi}
In this {section} and in Section \ref{comptau} we assume that $\psi$ is normalized and that $e(\chi)=m\geq 2$. In these two sections we shall denote by  $k$ the largest integer satisfying $m\geq 2k$, namely,

$$k= \begin{cases}  \frac {m} {2}\ & m \, \operatorname{is} \,  \operatorname{even} ;\\   \frac {m-1}{2}  &  m \, \operatorname{is} \,  \operatorname{odd} . \end{cases}$$
Define
$$h_{\psi}: 1+\Pf^{m-k} /1+\Pf^m \rightarrow \C^*$$ by
  $$h_{\psi}(x)=\psi_{\varpi^{-m}}(x-1)=\frac 1 {\psi(\varpi^{-m})}\psi(\varpi^{-m}x).$$
\begin{lem}  \label{h lem}  The following hold.
\begin{enumerate}
\item $h_{\psi}$ is a well defined character of $1+\Pf^{m-k} /1+\Pf^m$.
\item The restriction of $h_{\psi}$ to $1+\Pf^{m-1} /1+\Pf^m$ is non-trivial.
\item For $a,b \in \Of^*$, $h_{\psi_a}=h_{\psi_b}$ if and only if $a \equiv b \, \, {mod} \, \, 1+ \Pf^k$.
\end{enumerate}
\end{lem}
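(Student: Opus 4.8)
The plan is to verify the three assertions by direct computation with the character $\psi_{\varpi^{-m}}$, using only the definition of the conductor $e(\psi)=0$ (i.e.\ $\psi$ normalized, so $\psi$ is trivial on $\Pf^0=\Of$ but not on $\Pf^{-1}$) together with the elementary bilinearity $\psi_{\varpi^{-m}}\bigl((x-1)(y-1)\bigr)$ controlling the multiplicativity defect. First I would observe that for $x,y\in 1+\Pf^{m-k}$ one has $xy-1=(x-1)+(y-1)+(x-1)(y-1)$, and since $m-k\geq k$ (because $m\geq 2k$) the product term $(x-1)(y-1)$ lies in $\Pf^{2(m-k)}\subseteq\Pf^{m}$. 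As $\psi$ is normalized, $\psi(\varpi^{-m}\Pf^{m})=\psi(\Of)=1$, hence $h_\psi(xy)=h_\psi(x)h_\psi(y)$; the same reasoning with $x-1\in\Pf^m$ shows $h_\psi$ is trivial on $1+\Pf^m$, so it is a well-defined character of the quotient. That dispatches item (1).

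For item (2) I would argue that triviality of $h_\psi$ on $1+\Pf^{m-1}/1+\Pf^m$ would force $\psi(\varpi^{-m}\Pf^{m-1})=1$, i.e.\ $\psi(\Pf^{-1})=1$, contradicting $e(\psi)=0$. Concretely, pick $u\in\Pf^{-1}\setminus\Of$; then $1+\varpi^m u\in 1+\Pf^{m-1}$ and $h_\psi(1+\varpi^m u)=\psi(u)\neq 1$. For item (3), note $h_{\psi_a}(x)=\psi(a\varpi^{-m}(x-1))$, so $h_{\psi_a}=h_{\psi_b}$ iff $\psi\bigl((a-b)\varpi^{-m}(x-1)\bigr)=1$ for all $x\in 1+\Pf^{m-k}$, i.e.\ iff $\psi$ is trivial on $(a-b)\varpi^{-m}\Pf^{m-k}=(a-b)\Pf^{-k}$. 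Since $e(\psi)=0$, this holds iff $(a-b)\Pf^{-k}\subseteq\Of$, i.e.\ iff $a-b\in\Pf^{k}$, which for $a,b\in\Of^*$ is exactly $a\equiv b\pmod{1+\Pf^k}$ (dividing the congruence $a-b\in\Pf^k$ by $b\in\Of^*$).

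The only place requiring any care is the inequality $2(m-k)\geq m$ underpinning item (1): this is equivalent to $m\geq 2k$, which is precisely how $k$ was defined (the largest integer with $m\geq 2k$), so it holds with room to spare when $m$ is odd and with equality when $m$ is even. I do not anticipate a genuine obstacle; the main thing to keep straight is the bookkeeping of conductors and the translation between the additive statement ``$\psi$ trivial on $\Pf^j$'' and ``$j\geq 0$.''
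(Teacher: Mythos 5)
Your proof is correct and takes essentially the same approach as the paper's: both verify the three items by direct computation with $\psi_{\varpi^{-m}}(x-1)$, using the inequality $2(m-k)\geq m$ (i.e.\ $m\geq 2k$) to dispose of the cross term $(x-1)(y-1)$ and the normalization $e(\psi)=0$ to decide when $\psi$ is trivial on the relevant fractional ideals. One cosmetic remark on item (2): not every $u\in\Pf^{-1}\setminus\Of$ satisfies $\psi(u)\neq 1$ when $q>p$, but your preceding sentence (triviality of $h_\psi$ on $1+\Pf^{m-1}$ would force $\psi(\Pf^{-1})=1$, contradicting $e(\psi)=0$) already completes the argument, so nothing is missing.
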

\begin{proof}
Given $x,y \in1+\Pf^{m-k}$ write $x=1+u\varpi^{m-k}, \, y=1+t\varpi^{m-k}$ where $u,t \in \Of$. Observe that $$h_{\psi}(x)=\psi(u\varpi^{-k}), \, h_{\psi}(y)=\psi(t\varpi^{-k}).$$
\begin{enumerate}
\item Since $xy-1=(u+t)\varpi^{m-k}+ut\varpi^{2m-2k}$ and since $m-2k\geq 0$ we have
$$h_{\psi}(xy)=\psi\bigl((u+t)\varpi^{-k}\bigr)=\psi(u\varpi^{-k})\psi(t\varpi^{-k}).$$
If $y \in 1+\Pf^m$ then $t\in \Pf^k$ so $h_{\psi}(xy)=h_{\psi}(x)$. This shows that $h_{\psi}$ is well defined. It is also clear that $h_{\psi}(xy)=h_{\psi}(x)h_{\psi}(y)$ for all $x,y \in1+\Pf^{m-k}$.

\item If $x\in 1+\Pf^{m-1}$ then $u=u'\varpi^{k-1}$ where $u' \in \Of$. Therefore,   $h_{\psi}(x)= \psi \bigl(\varpi^{-1}u' \bigr)$. The assertion follows.
\item Observe that  $$h_{\psi_a}(x)h_{\psi_b}^{-1}(x)=\psi\bigl(\varpi^{-k}u(a-b) \bigr)=\psi_b\bigl(\varpi^{-k}u(ab^{-1}-1) \bigr).$$
The assertion follows.
\end{enumerate}
\end{proof}
\begin{cor} \label{c cor} (\cite{del}, Lemma 4.16) There exists a unique element $c_{\chi,\psi} \in  \Of^*/1+\Pf^k$ such that
\begin{equation} \label{c def} h_{\psi_{c_{\chi,\psi}}}(x)=\chi^{-1}(x) \end{equation} for all $x\in 1+\Pf^{m-k} /1+\Pf^{m}$.
\end{cor}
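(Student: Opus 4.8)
The plan is to exhibit $c_{\chi,\psi}$ directly from the structure of the characters of the quotient $1+\Pf^{m-k}/1+\Pf^m$. First I would observe that $\chi^{-1}$, being trivial on $1+\Pf^m$ by the definition of the conductor, restricts to a well-defined character of the abelian group $A:=1+\Pf^{m-k}/1+\Pf^m$. By Lemma \ref{h lem}(1), each $h_{\psi_a}$ for $a\in\Of^*$ is also a character of $A$, and by Lemma \ref{h lem}(3) the assignment $a\mapsto h_{\psi_a}$ depends only on the class of $a$ in $\Of^*/1+\Pf^k$ and is injective on that quotient. So the heart of the matter is to show that the map $\Of^*/1+\Pf^k\to\widehat{A}$, $a\mapsto h_{\psi_a}$, is in fact a bijection onto $\widehat A$, and that $\chi^{-1}$ lies in its image.

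The key step is a counting/duality argument. The group $A=1+\Pf^{m-k}/1+\Pf^m$ has order $q^{k}$ (since $m-(m-k)=k$), hence $\lvert\widehat A\rvert=q^k$. On the other side, $\lvert\Of^*/1+\Pf^k\rvert=q^{k-1}(q-1)$ when $k\ge 1$, which is \emph{not} equal to $q^k$, so the naive count fails and one must be more careful: the correct source group is not all of $\Of^*$. Instead I would argue as follows. The pairing $(u,v)\mapsto\psi(\varpi^{-k}uv)$ on $\Of/\Pf^k$ identifies $\Of/\Pf^k$ with its own Pontryagin dual (this uses that $\psi$ is normalized, so $\psi$ is trivial on $\Of$ but not on $\Pf^{-1}$). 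Writing a generic element of $A$ as $1+u\varpi^{m-k}$ with $u$ running over $\Of/\Pf^k$, the computation in the proof of Lemma \ref{h lem} shows $h_{\psi_a}(1+u\varpi^{m-k})=\psi(\varpi^{-k}au)$; thus as $a$ ranges over $\Of/\Pf^k$ the characters $h_{\psi_a}$ range over \emph{all} of $\widehat A$, and $h_{\psi_a}$ is trivial iff $a\in\Pf^k$. Restricting to $a\in\Of^*$, i.e. to $a$ not in $\Pf$, we hit exactly those characters of $A$ that are nontrivial on the subgroup $1+\Pf^{m-1}/1+\Pf^m$ (this is the content of, and is consistent with, Lemma \ref{h lem}(2)).

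It therefore remains to check that $\chi^{-1}\vert_A$ is nontrivial on $1+\Pf^{m-1}/1+\Pf^m$; but this is immediate from $e(\chi)=m$, which says precisely that $\chi$ (hence $\chi^{-1}$) is nontrivial on $1+\Pf^{m-1}$ while trivial on $1+\Pf^m$. Hence $\chi^{-1}\vert_A=h_{\psi_a}$ for a unique $a\in\Of/\Pf^k$, and that $a$ is automatically a unit; its image $c_{\chi,\psi}\in\Of^*/1+\Pf^k$ is the desired element, with uniqueness coming from the injectivity already noted (Lemma \ref{h lem}(3)). I expect the main obstacle to be bookkeeping the two slightly different quotients ($\Of/\Pf^k$ versus $\Of^*/1+\Pf^k$, and $1+\Pf^{m-k}/1+\Pf^m$ versus its subgroup $1+\Pf^{m-1}/1+\Pf^m$) and making the self-duality of $\Of/\Pf^k$ under $(u,v)\mapsto\psi(\varpi^{-k}uv)$ precise; once those are pinned down the existence and uniqueness of $c_{\chi,\psi}$ fall out. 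Alternatively, one may simply cite \cite{del}, Lemma 4.16, as the statement already does, and present the above as the sketch of its proof.
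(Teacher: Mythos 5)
Your proposal is correct and follows essentially the same route as the paper: both reduce the claim to showing that $a\mapsto h_{\psi_a}$ is a bijection from $\Of^*/1+\Pf^k$ onto the set of characters of $1+\Pf^{m-k}/1+\Pf^m$ that are nontrivial on $1+\Pf^{m-1}/1+\Pf^m$, with injectivity supplied by Lemma \ref{h lem}. The only (cosmetic) difference is the surjectivity step, which the paper handles by counting that set as $(q-1)q^{k-1}$ via the character-extension lemma, while you obtain it from the self-duality of $\Of/\Pf^k$ under $(u,v)\mapsto\psi(\varpi^{-k}uv)$; both arguments are valid and use the normalization of $\psi$ in the same way.
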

\begin{proof} Denote by $A$ the set of characters of $1+\Pf^{m-k} /1+\Pf^m$ whose restriction to \\$1+\Pf^{m-1} /1+\Pf^m$ is non-trivial. Since $x\mapsto \chi^{-1}(x)$ is an element of $A$ we need to prove that $a\mapsto h_{\psi_a}$ is a bijection from $ \Of^*/1+\Pf^k$ to $A$. By the second and third items in Lemma \ref{h lem} we know that this map is well defined and that it is one to one. It is left to show that $A$ has the same cardinality as $\Of^*/1+\Pf^k$. Recall that if $H$ is a subgroup of a finite  { abelian} group $G$ then any character on $H$ has $[G:H]$ extensions to a character of $G$. Thus, the cardinality of $A$ is $q^k-q^{k-1}$ as required.
\end{proof}
\begin{lem} \label{cprop} $c_{\chi,\psi}$ has the following properties
\begin{enumerate}
\item $c_{\chi,\psi}=c_{\chi,\psi_y}$ for all $y\in 1+\Pf^k$.
\item For any $l\in \Z$
\begin{equation} \label{c k prop} c_{\chi,\psi}=c_{\chi^l,\psi_l}.\end{equation}
\item For $a\in \Of^*$ we have
\begin{equation} \label{c shift prop} c_{\chi,\psi_a}=a^{-1}c_{\chi,\psi}. \end{equation}
\item { I}f $\eta$ is a character of $F^*$ such that $e(\eta)\leq m-k$ then  $c_{\eta\chi,\psi}=c_{\chi,\psi}$
\end{enumerate}
\end{lem}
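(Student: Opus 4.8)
The strategy is to verify each of the four properties by tracing the defining relation \eqref{c def} through the relevant substitution, using the uniqueness of $c_{\chi,\psi}$ established in Corollary \ref{c cor} and the explicit formulas for $h_\psi$ gathered in Lemma \ref{h lem}. In each case the plan is to show that the candidate element on the right-hand side satisfies the defining equation characterizing the left-hand side; uniqueness then forces equality. I expect the only genuinely delicate point to be bookkeeping with the parameter $k$: items (2) and (4) involve changing $\chi$ (to $\chi^l$, or to $\eta\chi$), and one must check that the integer $k$ attached to the new character agrees with the one for $\chi$, or at least that the defining relation still makes sense on the group $1+\Pf^{m-k}/1+\Pf^m$.

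For item (1), note that $c_{\chi,\psi}$ lives in $\Of^*/1+\Pf^k$, so for $y\in 1+\Pf^k$ we have $c_{\chi,\psi}y = c_{\chi,\psi}$ as elements of that quotient. Separately, by the first item of Lemma \ref{cprop} that I am proving — actually more cleanly by the third item of Lemma \ref{h lem} — one checks $h_{\psi_y}=h_\psi$ for $y\in 1+\Pf^k$, hence $h_{\psi_{c_{\chi,\psi}}}=h_{(\psi_y)_{c_{\chi,\psi}}}$, and the characterizing identity for $c_{\chi,\psi_y}$ is satisfied by $c_{\chi,\psi}$; uniqueness concludes. For item (3), apply the definition: $h_{\psi_{a^{-1}c_{\chi,\psi}}} = h_{(\psi_a)_{a^{-1}c_{\chi,\psi}}}$, and since $a\cdot a^{-1}c_{\chi,\psi}=c_{\chi,\psi}$ we get $h_{(\psi_a)_{a^{-1}c_{\chi,\psi}}} = h_{\psi_{c_{\chi,\psi}}} = \chi^{-1}$ on $1+\Pf^{m-k}/1+\Pf^m$; by uniqueness $c_{\chi,\psi_a}=a^{-1}c_{\chi,\psi}$. (Here I am implicitly using $e(\chi)$ unchanged under the unramified-in-the-relevant-sense twist $\psi\mapsto\psi_a$, $a\in\Of^*$, which is immediate.)

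For item (2), the key observation is that $\chi^l$ has conductor $\le m$ — in fact, on $1+\Pf^{m-k}$, whose image in $1+\Pf^{m-k}/1+\Pf^m$ is where the relation is tested, raising to the $l$-th power is compatible with $h$ because $h_\psi$ is a homomorphism: from the computation $h_\psi(x)=\psi(u\varpi^{-k})$ for $x=1+u\varpi^{m-k}$ one sees $h_{\psi_l}(x)=\psi(lu\varpi^{-k})=h_\psi(x)^l$ when... more carefully, $h_{\psi_l}(x)=\psi_l(u\varpi^{-k})=\psi(lu\varpi^{-k})$, which equals $h_\psi(x^l)$ since $x^l \equiv 1+lu\varpi^{m-k}$ modulo $1+\Pf^m$ (using $m-2k\ge 0$ to kill cross terms, exactly as in the proof of Lemma \ref{h lem}(1)). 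Thus $h_{(\psi_l)_{c_{\chi,\psi}}}(x) = h_{\psi_{c_{\chi,\psi}}}(x^l) = \chi^{-1}(x^l) = (\chi^l)^{-1}(x)$, so $c_{\chi,\psi}$ satisfies the relation defining $c_{\chi^l,\psi_l}$, and uniqueness gives \eqref{c k prop}. The one subtlety to dispatch is that the value of $k$ used for $\chi^l$ should be the same $m$-dependent $k$; since $e(\chi^l)\le m$ and the relation only needs to be checked as a statement about characters of $1+\Pf^{m-k}/1+\Pf^m$, this is harmless, but I would state it carefully. Finally, item (4): if $e(\eta)\le m-k$ then $\eta$ is trivial on $1+\Pf^{m-k}$, hence $(\eta\chi)^{-1}=\chi^{-1}$ on $1+\Pf^{m-k}/1+\Pf^m$, so the defining relation for $c_{\eta\chi,\psi}$ coincides verbatim with that for $c_{\chi,\psi}$; uniqueness gives $c_{\eta\chi,\psi}=c_{\chi,\psi}$. (Implicitly $e(\eta\chi)=m$ here, which holds since $e(\eta)\le m-k<m=e(\chi)$.) The main obstacle, such as it is, is purely organizational: making sure the conductor of the twisted/powered character is controlled so that "the relation on $1+\Pf^{m-k}/1+\Pf^m$" is the right object to compare, after which every assertion is a one-line application of Corollary \ref{c cor}.
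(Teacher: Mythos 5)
Your proposal is correct and follows essentially the same route as the paper: each item is reduced to checking that the candidate element satisfies the defining relation \eqref{c def} on $1+\Pf^{m-k}/1+\Pf^m$ (item (1) via $h_\psi=h_{\psi_y}$, item (2) by raising \eqref{c def} to the $l$-th power, item (3) via $(\psi_a)_b=\psi_{ab}$, item (4) via triviality of $\eta$ on $1+\Pf^{m-k}$), and then invoking the uniqueness from Corollary \ref{c cor}. Your extra bookkeeping about the conductor of $\chi^l$ and $\eta\chi$ is a reasonable point of care that the paper leaves implicit.
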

\begin{proof}
The first asterion follows from the fact that  $h_{\psi}=h_{\psi_y}$
for all $y\in 1+\Pf^k$, see the last item in Lemma \ref{h lem}. The second assertion is proven by taking the $l^{th}$ power of both sides of \eqref{c def}. For the third assertion just note that
$$ h_{\psi_{ac_{\chi,\psi_a}}}(x)=\chi^{-1}(x)$$ for all $x\in 1+\Pf^{m-k} /1+\Pf^{m}$. Equivalently, $ac_{\chi,\psi_a}=c_{\chi,\psi}.$ The last statement follows from the fact that the restrictions of $\chi$ and $\eta\chi$ to $1+\Pf^{m-k}$ are equal.
\end{proof}
\begin{lem} \label{blemma} Assume that both $m$ and the residual characteristic of $F$  are odd. Then, there exists a unique element $b_{\chi,\psi}$ in $\Of / \Pf$ such that for all $x\in 1+\Pf^{\frac{m-1}{2}}$

$$\chi(x)=\psi_{2^{-1}c_{\chi,\psi}\varpi^{-m}}\bigl((x-1)^2-2(x-1)\bigr)\psi_{c_{\chi,\psi}b_{\chi,\psi}\varpi^{-\frac{m+1}{2}}}(x-1).$$
\end{lem}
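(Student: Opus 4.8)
The plan is to complete the square. First, since $m$ is odd with $m\ge 2$ we in fact have $m\ge 3$, so $k=\tfrac{m-1}{2}\ge 1$ and $m-k=k+1=\tfrac{m+1}{2}$; I would fix once and for all a representative in $\Of^*$ of the class $c_{\chi,\psi}\in\Of^*/1+\Pf^{k}$ supplied by Corollary~\ref{c cor}, keeping the notation $c_{\chi,\psi}$ for it (changing the representative merely shifts $b_{\chi,\psi}$ by a compensating element of $\K$, a point I would flag in passing). By Corollary~\ref{c cor} this gives $\chi(x)=\psi\bigl(-c_{\chi,\psi}\varpi^{-m}(x-1)\bigr)$ for every $x\in 1+\Pf^{k+1}$. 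The underlying idea is that the map $1+u\mapsto\psi(-c_{\chi,\psi}\varpi^{-m}u)$ is multiplicative on $1+\Pf^{k+1}$ but not on the larger group $1+\Pf^{k}$; a quadratic correction repairs this, and the residual discrepancy with $\chi$ is a character of the residue field, which produces $b_{\chi,\psi}$.

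Next I would introduce, using that $2\in\Of^*$ because the residual characteristic is odd,
$$\theta(x)=\psi_{2^{-1}c_{\chi,\psi}\varpi^{-m}}\bigl((x-1)^2-2(x-1)\bigr),\qquad x\in 1+\Pf^{k},$$
and prove that $\theta$ is a character of $1+\Pf^{k}$. With $u=x-1$, $v=y-1\in\Pf^{k}$ and $xy-1=u+v+uv$, the polynomial $(u+v+uv)^2-2(u+v+uv)$ differs from $(u^2-2u)+(v^2-2v)$ by $u^2v^2+2u^2v+2uv^2\in\Pf^{3k}$. Two features matter here: the linear cross term $2uv$ cancels --- which is exactly why the right expression is $(x-1)^2-2(x-1)$ and not $(x-1)^2$ --- and $3k\ge m$, which holds precisely because $m\ge 3$. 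Since $2^{-1}c_{\chi,\psi}\varpi^{-m}\cdot\Pf^{3k}\subseteq\Of$ and $\psi$ is trivial on $\Of$, this yields $\theta(xy)=\theta(x)\theta(y)$.

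I would then compare $\theta$ with $\chi$: for $u=x-1\in\Pf^{k+1}$ one has $u^2\in\Pf^{m+1}$, so the quadratic factor of $\theta(x)$ equals $1$ and $\theta(x)=\psi(-c_{\chi,\psi}\varpi^{-m}u)=\chi(x)$ by the first paragraph. Hence $\chi\theta^{-1}$ is a character of $1+\Pf^{k}$ that is trivial on $1+\Pf^{k+1}$, so it factors through $(1+\Pf^{k})/(1+\Pf^{k+1})$; since $k\ge 1$, the map $1+t\varpi^{k}\mapsto t+\Pf$ identifies this quotient with the additive group of $\K$. As $\psi$ is normalized, $\psi'$ is a nontrivial additive character of $\K$, so there is a unique $\gamma\in\K$ with $(\chi\theta^{-1})(1+t\varpi^{k})=\psi'(\gamma\bar t)=\psi(\gamma t\varpi^{-1})$.

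Finally I would set $b_{\chi,\psi}:=\overline{c_{\chi,\psi}}^{\,-1}\gamma\in\K=\Of/\Pf$, which is legitimate since $\overline{c_{\chi,\psi}}\in\K^*$; then for $x=1+t\varpi^{k}$, using $k+1=\tfrac{m+1}{2}$, one has $\psi(\gamma t\varpi^{-1})=\psi\bigl(c_{\chi,\psi}b_{\chi,\psi}\varpi^{-(m+1)/2}\,t\varpi^{k}\bigr)=\psi_{c_{\chi,\psi}b_{\chi,\psi}\varpi^{-(m+1)/2}}(x-1)$. Multiplying by $\theta(x)$ gives the asserted identity, and uniqueness of $b_{\chi,\psi}$ is inherited from that of $\gamma$. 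I expect the verification that $\theta$ is a character to be the only real obstacle: it requires guessing the correct quadratic correction and then careful bookkeeping of $\varpi$-adic valuations, but everything reduces to the single inequality $3k\ge m$ and to the cancellation of the linear term.
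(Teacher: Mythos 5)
Your proposal is correct and follows essentially the same route as the paper: both arguments verify that $x\mapsto\psi_{2^{-1}c_{\chi,\psi}\varpi^{-m}}\bigl((x-1)^2-2(x-1)\bigr)$ is a character of $1+\Pf^{\frac{m-1}{2}}$ agreeing with $\chi$ on $1+\Pf^{\frac{m+1}{2}}$, and then identify the remaining discrepancy with one of the $q$ linear twists $x\mapsto\psi_{a\varpi^{-\frac{m+1}{2}}}(x-1)$. Your bookkeeping via the error term in $\Pf^{3k}$ and the identification of the quotient with $(\K,+)$ is only a cosmetic repackaging of the paper's extension-counting argument.
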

\begin{proof}
Similar to the proof of Lemma \ref{h lem} one shows that the map $x\mapsto \psi_{\varpi^{-\frac{m+1}{2}}}(x-1)$ defines a character on $1+\Pf^{\frac{m-1}{2}}$ whose kernel contains $1+\Pf^{\frac{m+1}{2}}$ and that for $a,b \in \Of$ the maps
 $x\mapsto \psi_{a\varpi^{-\frac{m+1}{2}}}(x-1)$ and  $x\mapsto \psi_{b\varpi^{-\frac{m+1}{2}}}(x-1)$ defined  on $1+\Pf^{\frac{m-1}{2}}$ are equal if and only if  $a \equiv b \, \, {mod} \, \, \Pf$.

Since any character of $1+\Pf^{\frac{m+1}{2}}$ has exactly $q$ extensions to a character of $1+\Pf^{\frac{m-1}{2}}$ and since $\chi \mid_{ 1+\Pf^{\frac{m+1}{2}}}=h_{\psi_{-c_{\chi,\psi}}}$ it is left to show that the map
$$x\mapsto \alpha(x)=\psi_{2^{-1}c_{\chi,\psi}\varpi^{-m}}\bigl((x-1)^2-2(x-1)\bigr)$$ defined on $1+\Pf^{\frac{m-1}{2}}$  is a character and that its restriction to $1+\Pf^{\frac{m+1}{2}}$ equals $h_{\psi_{-c_{\chi,\psi}}}$.

We start with the second assertion. Given $x\in 1+\Pf^{\frac{m-1}{2}}$ write $x=1+t\varpi^{\frac{m-1}{2}}$ where $t\in \Of$. With this notation
$$\alpha(x)=\psi_{2^{-1}c_{\chi,\psi}\varpi^{-m}}\bigl( t^2\varpi^{m-1}-2t\varpi^{\frac{m-1}{2}}\bigr).$$
Suppose now that $x\in 1+\Pf^{\frac{m+1}{2}}$. Then $t=t'\varpi$ where $t'\in \Of$. Since $2^{-1}\in \Of^*$, as the residual characteristic of $F$ is assumed to be odd, it follows that $\psi_{2^{-1}}$ is normalized so we have
$$\alpha(x)=  \psi_{2^{-1}c_{\chi,\psi}\varpi^{-m}}\bigl( t'^2\varpi^{m+1}-2t'\varpi^{\frac{m+1}{2}}\bigr)= \psi_{-c_{\chi,\psi}\varpi^{-m}}\bigl(t'\varpi^{\frac{m+1}{2}}\bigr)= \psi_{-c_{\chi,\psi}\varpi^{-m}}(x-1).$$

We now show that $\alpha$ is a character. Write  $x=1+t\varpi^{\frac{m-1}{2}}, \, y=1+z\varpi^{\frac{m-1}{2}}$ where $t,z\in \Of$.
By the above $$\alpha(x)\alpha(y)=\psi_{2^{-1}c_{\chi,\psi}\varpi^{-m}}\bigl( (t^2+z^2)\varpi^{m-1}-2(t+z)\varpi^{\frac{m-1}{2}}\bigr).$$
On the other hand
$$xy=1+\varpi^{\frac{m-1}{2}}(z+t+zt\varpi^{\frac{m-1}{2}}),$$
Hence,
$$\alpha(xy)=\psi_{2^{-1}c_{\chi,\psi}\varpi^{-m}}\bigl( (z+t+zt\varpi^{\frac{m-1}{2}})^2\varpi^{m-1}-2(z+t+zt\varpi^{\frac{m-1}{2}})\varpi^{\frac{m-1}{2}}\bigr).$$
Since
\begin{eqnarray}  \nonumber (z+t+zt\varpi^{\frac{m-1}{2}})^2 &\varpi^{m-1} &-2(z+t+zt\varpi^{\frac{m-1}{2}})\varpi^{\frac{m-1}{2}}= \\  \nonumber
z^2\varpi^{m-1}+t^2 \! \! \! \! & \varpi^{m-1} &+\bigl(z^2t^2\varpi^{2(m-1)}+2z^2t\varpi^{\frac{3(m-1)}{2}}+2zt^2\varpi^{\frac{3(m-1)}{2}}\bigr)-
2(z+t)\varpi^{\frac{m-1}{2}} \end{eqnarray}
it follows that
$$\alpha(xy)=\psi_{2^{-1}c_{\chi,\psi}\varpi^{-m}}\bigl((z^2+t^2)\varpi^{m-1}-2(z+t)\varpi^{\frac{m-1}{2}}\bigr).$$
\end{proof}
\begin{remark} The formula proven in Lemma \ref{blemma} might seem unexplained. However it is closely related to the following observation. On the set $\K \times \K$ one defines an abelian group structure by defining $(x,y)(a,b)=(a+x,y+b+ax).$ Denote this group by $A$ (it is isomorphic to a certain { maximal} abelian subgroup of the Heisenberg group). Note that for $l\geq 1$ the group $1+\Pf^l/1+\Pf^{l+2}$ is isomorphic $A$. Lemma \ref{blemma} essentially gives a formula for extending a non trivial character  of the subgroup $(0,\K)\simeq \K$ of $A$ to $A$. The key fact is that if the residual characteristic of $F$ is odd then $(x,y)\mapsto(x,y-\frac{x^2}{2})$ is an isomorphism from $A$ to the group  $\K \times \K$. If the residual characteristic of $F$ is even then $A$ and  $\K \times \K$ are not isomorphic since $(1,1)\in A$ has order 4 while all the non-trivial elements in $\K \times \K$ has order 2. At this point we do not know how to solve this extension problem in the even cases. Consequently, we had to exclude  these cases in Lemma \ref{blemma}.
\end{remark}
\begin{lem} \label{bprop} $b_{\chi,\psi}$ has the following properties
\begin{enumerate}

\item  $b_{\chi,\psi}=b_{\chi^l,\psi_l}.$

\item If $\eta$ is a character of $F^*$ such that $e(\eta) \leq \frac{m-1}{2}$ then  $b_{\eta\chi,\psi}=b_{\chi,\psi}$.
\end{enumerate}
\end{lem}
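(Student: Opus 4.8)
The plan is to deduce both items from the uniqueness clause of Lemma~\ref{blemma}, combined with the transformation rules for $c_{\chi,\psi}$ recorded in Lemma~\ref{cprop}. The mechanism is identical in the two cases: for the twisted pair in question I will verify, using facts already available, that $b_{\chi,\psi}$ itself satisfies the identity that characterizes the $b$-invariant of that pair, and then conclude the desired equality by uniqueness. The only algebraic input is the elementary chain $\psi_a(z)^l=\psi_{la}(z)=(\psi_l)_a(z)$, valid for any $a,z$ and $l\in\Z$.

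For the first item, note first that since $l$ is prime to the residual characteristic of $F$, $\psi_l$ is again normalized, and, inspecting the nontrivial additive character that $\chi$ induces on $(1+\Pf^{m-1})/(1+\Pf^m)\cong\K^+$, one sees that $e(\chi^l)=m$ is still odd; hence $(\chi^l,\psi_l)$ falls under the standing hypotheses of this section and $b_{\chi^l,\psi_l}$ is defined. Now raise the identity of Lemma~\ref{blemma} for $(\chi,\psi)$ to the $l$-th power. Applying $\psi_a(z)^l=(\psi_l)_a(z)$ to each of the two factors on the right, and using $c_{\chi^l,\psi_l}=c_{\chi,\psi}$ from the second item of Lemma~\ref{cprop}, the identity one obtains is exactly the identity of Lemma~\ref{blemma} for the pair $(\chi^l,\psi_l)$ with $b_{\chi,\psi}$ playing the role of $b$. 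By the uniqueness of that $b$, $b_{\chi^l,\psi_l}=b_{\chi,\psi}$.

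For the second item, the hypothesis $e(\eta)\le\frac{m-1}{2}$ forces $\eta$ to be trivial on $1+\Pf^{\frac{m-1}{2}}$, so that $(\eta\chi)(x)=\chi(x)$ for every $x\in 1+\Pf^{\frac{m-1}{2}}$; in particular $e(\eta\chi)=m$ (as $e(\eta)<e(\chi)$), so $b_{\eta\chi,\psi}$ is defined. Moreover $e(\eta)\le\frac{m-1}{2}\le\frac{m+1}{2}=m-k$, so the fourth item of Lemma~\ref{cprop} gives $c_{\eta\chi,\psi}=c_{\chi,\psi}$. Substituting these two equalities into the identity of Lemma~\ref{blemma} for $(\eta\chi,\psi)$ turns it into the identity of Lemma~\ref{blemma} for $(\chi,\psi)$, with $b_{\eta\chi,\psi}$ in the role of $b$; since Lemma~\ref{blemma} applied to $\chi$ shows that role is played by $b_{\chi,\psi}$, uniqueness gives $b_{\eta\chi,\psi}=b_{\chi,\psi}$.

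I do not expect a genuine obstacle; the argument is bookkeeping with the definitions of $b$ and $c$, mirroring the proofs of the second and fourth items of Lemma~\ref{cprop}. The two points that require a little care are: confirming that the twisted pairs $(\chi^l,\psi_l)$ and $(\eta\chi,\psi)$ still satisfy the running hypotheses — normalized additive character, odd conductor $m\ge 2$ — so that their $b$-invariants are defined (for $(\chi^l,\psi_l)$ this is exactly where $l$ prime to the residual characteristic is used); and tracking the scalars through the $l$-th power, noting that the identification of $b_{\chi,\psi}$ with the $b$-parameter of $(\chi^l,\psi_l)$ is unambiguous because the scalar multiplying $b$ in the characterizing identity, namely $c_{\chi,\psi}$, lies in $\Of^*$.
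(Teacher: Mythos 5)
Your proof is correct and is exactly the argument the paper intends: the paper's proof is the one-line remark that the claim ``follows by using similar reasoning as in the proof of Lemma~\ref{cprop}'', i.e.\ raise the characterizing identity of Lemma~\ref{blemma} to the $l$-th power (resp.\ note that $\eta\chi$ and $\chi$ agree on $1+\Pf^{\frac{m-1}{2}}$), invoke the corresponding items of Lemma~\ref{cprop} for $c_{\chi,\psi}$, and conclude by the uniqueness of $b$. Your added observation that $l$ should be prime to the residual characteristic (so that $e(\chi^l)=m$ and $\psi_l$ stays normalized, making $b_{\chi^l,\psi_l}$ well defined) is a sensible precision that the paper leaves implicit.
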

\begin{proof}
This follows by using similar reasoning as in the proof of Lemma \ref{cprop}.
\end{proof}

\section{Computations of $\tau(\chi,\psi)$ in the case where $e(\chi)\geq 2$} \label{comptau}


\begin{lem} (\cite{BH}, Section 23.6) \label{tdelta}
\begin{equation} \label{taufor} \tau(\chi ,\psi)=q^{k-\frac m 2}\chi(c_{\chi,\psi})\sum_{y\in 1+\Pf^{k}/1+\Pf^{m-k}}\chi(y) \psi(c_{\chi,\psi}y\varpi^{-m}).\end{equation}

\end{lem}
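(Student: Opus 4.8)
The plan is to start from the definition $\tau(\chi,\psi)=q^{-m/2}\sum_{a\in \Of^*/1+\Pf^m}\chi(a)\psi(\varpi^{-m}a)$ (recalling $e(\psi)=0$, $e(\chi)=m$) and reorganize the sum over $\Of^*/1+\Pf^m$ by first summing over the smaller quotient. The key structural observation is that $\Of^*/1+\Pf^m$ decomposes compatibly with the chain $1+\Pf^k \supseteq 1+\Pf^{m-k}$: every $a\in\Of^*$ can be written (modulo $1+\Pf^m$) as $a=c_{\chi,\psi}\, y\, u$ where $y$ ranges over coset representatives of $1+\Pf^k/1+\Pf^{m-k}$ and $u$ ranges over $1+\Pf^{m-k}/1+\Pf^m$, up to a bounded-index ambiguity coming from $\Of^*/1+\Pf^k$. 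The cleanest way to phrase this is: fix the representative $c_{\chi,\psi}$ of its class in $\Of^*/1+\Pf^k$, and write a general unit as $c_{\chi,\psi}\cdot z\cdot u$ with $z\in 1+\Pf^k/1+\Pf^m$ and then split $z=yu'$ with $y\in 1+\Pf^k/1+\Pf^{m-k}$, $u'\in 1+\Pf^{m-k}/1+\Pf^m$. Actually the honest reindexing is over $a = c_{\chi,\psi}\, y\, x$ with $y\in (1+\Pf^k)/(1+\Pf^{m-k})$ and $x\in (1+\Pf^{m-k})/(1+\Pf^m)$, multiplied by the requirement that we also sum over the $\Of^*/1+\Pf^k$ part — but that latter part is absorbed by noting $\tau$ only depends on $c_{\chi,\psi}$ through its class mod $1+\Pf^k$. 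So first I would write
$$\sum_{a\in \Of^*/1+\Pf^m}\chi(a)\psi(\varpi^{-m}a)=\sum_{y}\ \sum_{x\in (1+\Pf^{m-k})/(1+\Pf^m)}\chi(c_{\chi,\psi}yx)\psi(\varpi^{-m}c_{\chi,\psi}yx),$$
where $y$ runs over a set of representatives such that $c_{\chi,\psi}y$ runs over $\Of^*/1+\Pf^{m-k}$ — this needs the fact that $a\mapsto$ its class determines a well-defined decomposition, which follows from $1+\Pf^{m-k}$ being a subgroup.

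Next I would factor $\chi(c_{\chi,\psi}yx)=\chi(c_{\chi,\psi})\chi(y)\chi(x)$ and $\psi(\varpi^{-m}c_{\chi,\psi}yx)=\psi(c_{\chi,\psi}\varpi^{-m}yx)$, and perform the inner sum over $x\in (1+\Pf^{m-k})/(1+\Pf^m)$. Here the point of Corollary \ref{c cor} enters: $\chi(x)=h_{\psi_{c_{\chi,\psi}}}(x)^{-1}=\psi_{c_{\chi,\psi}\varpi^{-m}}(x-1)^{-1}$ for $x\in 1+\Pf^{m-k}/1+\Pf^m$. Writing $x=1+w$ with $w\in\Pf^{m-k}$, we get $\psi(c_{\chi,\psi}\varpi^{-m}yx)=\psi(c_{\chi,\psi}\varpi^{-m}y)\psi(c_{\chi,\psi}\varpi^{-m}yw)$. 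When $y\in 1+\Pf^k$, a short computation (using $m-2k\ge 0$, exactly as in the proof of Lemma \ref{h lem}) shows $\psi(c_{\chi,\psi}\varpi^{-m}yw)=\psi(c_{\chi,\psi}\varpi^{-m}w)=h_{\psi_{c_{\chi,\psi}}}(x)=\chi(x)^{-1}$, so the $x$-sum becomes $\sum_x \chi(x)\chi(x)^{-1}=\#\{x\}=q^{m-(m-k)}=q^{k}$. Thus the inner sum contributes a clean factor $q^k$, collapsing the $x$-variable entirely, and we are left with
$$\tau(\chi,\psi)=q^{-m/2}\chi(c_{\chi,\psi})\, q^{k}\sum_{y\in (1+\Pf^k)/(1+\Pf^{m-k})}\chi(y)\psi(c_{\chi,\psi}y\varpi^{-m}),$$
which is exactly \eqref{taufor}.

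The main obstacle — and the step requiring the most care — is justifying the reindexing $a\leftrightarrow (y,x)$ and in particular verifying that the inner sum over $x$ really does reduce to counting, which requires $y$ to lie in $1+\Pf^k$ so that the cross term $yw$ can be replaced by $w$ modulo the kernel of $\psi(c_{\chi,\psi}\varpi^{-m}\cdot)$ restricted to $\Pf^{m-k}$. One must check that as $a$ ranges over $\Of^*/1+\Pf^m$, writing $a=c_{\chi,\psi}yx$ forces $c_{\chi,\psi}y$ to range over $\Of^*/1+\Pf^{m-k}$ and then that the character $\chi$ restricted to the coset $c_{\chi,\psi}y(1+\Pf^{m-k})/(1+\Pf^m)$ is precisely the character $x\mapsto \chi(c_{\chi,\psi}y)h_{\psi_{c_{\chi,\psi}}}(x)^{-1}$, whose orthogonality against the additive character $x\mapsto\psi(c_{\chi,\psi}\varpi^{-m}yx)$ (for $y\notin 1+\Pf^k$) must also be addressed — in those cases one expects the $x$-sum to vanish unless $y\in 1+\Pf^k$, which is why the outer sum in \eqref{taufor} is only over $y\in (1+\Pf^k)/(1+\Pf^{m-k})$. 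I would handle this by splitting the full $y$-range into $y\in 1+\Pf^k$ (giving the stated contribution) and $y\notin 1+\Pf^k$ (where the inner additive-character sum over $x\in(1+\Pf^{m-k})/(1+\Pf^m)$ is a nontrivial character sum and vanishes), citing the argument of \cite{BH}, Section 23.6 for the bookkeeping.
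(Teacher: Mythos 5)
Your argument is correct and is essentially the paper's own proof: both decompose $\Of^*/(1+\Pf^m)$ along the filtration $\Of^*\supset 1+\Pf^{k}\supset 1+\Pf^{m-k}\supset 1+\Pf^{m}$ and evaluate the innermost sum over $(1+\Pf^{m-k})/(1+\Pf^{m})$, where Corollary \ref{c cor} together with orthogonality of characters of that finite group yields $q^{k}$ on the coset of $c_{\chi,\psi}$ and $0$ on all other cosets. The only cosmetic difference is the order of grouping: the paper first sums over $a\in\Of^*/(1+\Pf^{k})$ and then splits the inner variable, while you merge the two outer layers into a single sum over $\Of^*/(1+\Pf^{m-k})$.
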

\begin{proof} We write
$$\tau(\chi ,\psi)=q^{\frac{-m}{2}}\sum_{a\in \Of^*/1+\Pf^k} \chi (a) \delta(a,\chi,\psi) $$
where for $a\in \Of^*/1+\Pf^k$ $$ \delta(a,\chi,\psi)=\sum_{x\in 1+\Pf^{k} /1+\Pf^m} \chi(x)\psi_a(\varpi^{-m}x).$$
The proof of this lemma is completed once we show that
$$\delta(a,\chi,\psi)=\begin{cases} q^k \sum_{y\in 1+\Pf^{k}/1+\Pf^{m-k}}\chi(y) \psi(c_{\chi,\psi}y\varpi^{-m}) &  a= c_{\chi,\psi}; \\ 0  &   a\neq c_{\chi,\psi}. \end{cases}$$
For this purpose we further write
\begin{eqnarray}  \nonumber \delta(a,\chi,\psi) =\sum_{y\in 1+\Pf^{k}/1+\Pf^{m-k}}\db &\chi(y)& \db \quad \sum_{x\in 1+\Pf^{{m-k}} / 1+\Pf^m} \db \chi(x)\psi_{ay}(\varpi^{-m}x)=  \\ \nonumber  \sum_{y\in 1+\Pf^{k}/1+\Pf^{m-k}} \db &\chi(y)& \psi(ay\varpi^{-m}) \db \sum_{x\in 1+\Pf^{{m-k}} / 1+\Pf^m} \db\chi(x)h_{\psi_{ay}}(x).\end{eqnarray}

By the third item of Lemma \ref{h lem} we may replace the $h_{\psi_{ay}}(x)$ in the inner summation by $h_{\psi_{a}}(x)$. By Corollary \ref{c cor} and by the orthogonality relation of characters of finite { abelian} groups
\begin{equation} \label{deltacomp} \sum_{x\in 1+\Pf^{m-k} /1+\Pf^m} \chi(x) h_{\psi_{ay}}(x)= \begin{cases} q^k  &  a=c_{\chi,\psi}; \\ 0  &   \operatorname{otherwise}. \end{cases} \end{equation}
\end{proof}
\begin{cor} \label{taulowtwist}
If $\eta$ is a character of $F^*$ such that $e(\eta)\leq k$ then  $$\tau(\chi\eta ,\psi)=\eta(c_{\chi,\psi})\tau(\chi,\psi).$$
\end{cor}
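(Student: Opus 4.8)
The plan is to apply the summation-free formula \eqref{taufor} of Lemma~\ref{tdelta} to both $\tau(\chi\eta,\psi)$ and $\tau(\chi,\psi)$ and then compare the two expressions. The first step is the elementary conductor bookkeeping. Since $e(\eta)\leq k<m=e(\chi)$, the twisted character $\chi\eta$ is again ramified with $e(\chi\eta)=m$, so the integer $k$ attached to $\chi\eta$ is the same as the one attached to $\chi$, and hence Lemma~\ref{tdelta} applies to $\chi\eta$ with the identical parameters $m$ and $k$.

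Next I would invoke the stability of $c_{\chi,\psi}$ under low-conductor twists. Because $m\geq 2k$ we have $e(\eta)\leq k\leq m-k$, so the fourth item of Lemma~\ref{cprop} gives $c_{\chi\eta,\psi}=c_{\chi,\psi}$; abbreviate this common element by $c$. Substituting $\chi\eta$ into \eqref{taufor} produces
$$\tau(\chi\eta,\psi)=q^{k-\frac m2}(\chi\eta)(c)\sum_{y\in 1+\Pf^{k}/1+\Pf^{m-k}}(\chi\eta)(y)\,\psi(cy\varpi^{-m}).$$
Since $e(\eta)\leq k$, the character $\eta$ is trivial on $1+\Pf^{k}$, so $\eta(y)=1$ for every $y$ appearing in the sum; thus the sum coincides term by term with the one in \eqref{taufor} for $\tau(\chi,\psi)$. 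Factoring the scalar $\eta(c)$ out of the prefactor $(\chi\eta)(c)=\chi(c)\eta(c)$ then yields $\tau(\chi\eta,\psi)=\eta(c)\,\tau(\chi,\psi)=\eta(c_{\chi,\psi})\tau(\chi,\psi)$, which is the claim.

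I do not expect a genuine obstacle here: the substantive content has already been extracted in Lemma~\ref{tdelta} (reduction of $\tau$ to a sum over $1+\Pf^{k}/1+\Pf^{m-k}$) and in Lemma~\ref{cprop}(4) (invariance of $c_{\chi,\psi}$ under twists of conductor at most $m-k$). The only points deserving a moment's attention are the inequality $k\leq m-k$, which is exactly what licenses the use of Lemma~\ref{cprop}(4), and the harmless edge case of unramified $\eta$ (i.e. $e(\eta)=0$), where $\eta$ is trivial on all of $\Of^{*}$ and the computation above goes through verbatim.
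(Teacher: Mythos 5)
Your proof is correct and follows exactly the paper's route: the paper's own one-line argument cites precisely Lemma~\ref{tdelta} together with the last item of Lemma~\ref{cprop}, and you have simply spelled out the details (the conductor bookkeeping $e(\chi\eta)=m$, the invariance $c_{\chi\eta,\psi}=c_{\chi,\psi}$ from $e(\eta)\leq k\leq m-k$, and the triviality of $\eta$ on $1+\Pf^{k}$). Nothing further is needed.
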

\begin{proof} This follows at once from Lemma \ref{tdelta} combined with the last item in Proportion \ref{cprop}.
\end{proof}
We Note that using similar arguments, a similar statement for $\epsilon$-factors is proven in \cite{del}, Lemma 4.16.
\begin{lem} \label{tauvalue}
If $m$ is even then
$$\tau(\chi ,\psi)=\chi (c_{\chi,\psi})\psi(c_{\chi,\psi}\varpi^{-m}).$$
If both $m$ and the residual characteristic of $F$ are odd then
$$ \tau(\chi ,\psi)= \chi (c_{\chi,\psi}) \psi(c_{\chi,\psi}\varpi^{-m})\psi_{2^{-1}c_{\chi,\psi}\varpi^{-1}}\bigl(-b^2_{\chi,\psi}\bigr)\eta_{_\K}(2c_{\chi,\psi})q^{\frac{-1}{2}}G_\psi(\K).$$

\end{lem}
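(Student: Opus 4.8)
The plan is to read both identities off the closed form established in Lemma~\ref{tdelta}, namely
$$\tau(\chi ,\psi)=q^{k-\frac m 2}\chi(c_{\chi,\psi})\sum_{y\in 1+\Pf^{k}/1+\Pf^{m-k}}\chi(y) \psi(c_{\chi,\psi}y\varpi^{-m}).$$
When $m$ is even we have $k=\frac m2$, so the power of $q$ is trivial and the quotient $1+\Pf^{k}/1+\Pf^{m-k}$ is the trivial group; only the term $y=1$ survives, and the first formula drops out.

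For the odd case, $k=\frac{m-1}{2}$, $m-k=\frac{m+1}{2}$, and $q^{k-m/2}=q^{-1/2}$; I would parametrize the summation variable as $y=1+t\varpi^{\frac{m-1}{2}}$ with $t$ running over $\Of/\Pf$. Substituting $x=y$, so $x-1=t\varpi^{\frac{m-1}{2}}$ and $(x-1)^2-2(x-1)=t^2\varpi^{m-1}-2t\varpi^{\frac{m-1}{2}}$, into the formula of Lemma~\ref{blemma} rewrites $\chi(y)$ as a product of three $\psi$-values, one of which is $\psi\bigl(-c_{\chi,\psi}t\varpi^{-\frac{m+1}{2}}\bigr)$. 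Expanding $\psi(c_{\chi,\psi}y\varpi^{-m})=\psi(c_{\chi,\psi}\varpi^{-m})\psi\bigl(c_{\chi,\psi}t\varpi^{-\frac{m+1}{2}}\bigr)$ and multiplying, the two terms of the shape $\psi\bigl(\pm c_{\chi,\psi}t\varpi^{-\frac{m+1}{2}}\bigr)$ cancel; this cancellation, produced precisely by the $-2(x-1)$ correction term built into Lemma~\ref{blemma}, is what makes the remaining summand descend to a function of $t\bmod\Pf$. After it one is left with
$$\tau(\chi,\psi)=q^{-\frac12}\chi(c_{\chi,\psi})\psi(c_{\chi,\psi}\varpi^{-m})\sum_{t\in\Of/\Pf}\psi\bigl(2^{-1}c_{\chi,\psi}(t^2+2b_{\chi,\psi}t)\varpi^{-1}\bigr).$$

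Since $\psi$ is normalized, $x\mapsto\psi(x\varpi^{-1})$ is exactly the residual character $\psi'$ of $\K$, and (using $2^{-1}\in\Of^*$, as the residual characteristic is odd) the sum becomes $\sum_{\bar t\in\K}\psi'_{2^{-1}\bar c}(\bar t^{\,2}+2\bar b\,\bar t)$, where $\bar c,\bar b$ denote the images in $\K$ of $c_{\chi,\psi},b_{\chi,\psi}$. Completing the square $\bar t\mapsto\bar t+\bar b$ turns this into $\psi'_{2^{-1}\bar c}(-\bar b^{\,2})\,G_{\psi'_{2^{-1}\bar c}}(\K)$, and then \eqref{cquadgauss} together with $\eta_{_\K}(2^{-1})=\eta_{_\K}(2)$ evaluates $G_{\psi'_{2^{-1}\bar c}}(\K)=\eta_{_\K}(2c_{\chi,\psi})G_{\psi'}(\K)=\eta_{_\K}(2c_{\chi,\psi})G_\psi(\K)$; also $\psi'_{2^{-1}\bar c}(-\bar b^{\,2})=\psi_{2^{-1}c_{\chi,\psi}\varpi^{-1}}(-b^2_{\chi,\psi})$. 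Assembling the pieces gives the claimed expression. The computation is essentially bookkeeping; the one step that needs care is verifying the cancellation of the two $\varpi^{-\frac{m+1}{2}}$-scaled linear terms after substituting into Lemma~\ref{blemma}, so that everything remaining is $\varpi^{-1}$-scaled, i.e.\ residue-field data, and keeping track of the fact that $c_{\chi,\psi}$ and $b_{\chi,\psi}$ enter only through their reductions modulo $\Pf$, which is legitimate since $k=\frac{m-1}{2}\ge1$.
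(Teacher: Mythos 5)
Your proposal is correct and follows essentially the same route as the paper's own proof: both start from the closed form in Lemma~\ref{tdelta}, dispose of the even case by noting the quotient group is trivial, and in the odd case substitute Lemma~\ref{blemma}, cancel the two $\varpi^{-\frac{m+1}{2}}$-scaled linear terms, complete the square over $\K$, and evaluate the resulting quadratic Gauss sum via \eqref{cquadgauss}. The only differences are cosmetic (explicit reduction to residue-field data, citing \eqref{cquadgauss} rather than \eqref{gausspsi} and \eqref{quadgauss} separately).
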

\begin{proof}
We shall compute the sum in the right hand side of \eqref{taufor} . If $m$ is even the summation is taken over the trivial group and there is nothing to prove. We assume that $m$ is odd. In this case the summation is taken over the group $1+\Pf^{\frac{m-1}{2}}/1+\Pf^{\frac {m+1}{2}} \simeq \K$. To facilitate the reading we shall denote this group in this proof by $c_m$. We have

$$\tau(\chi ,\psi)= q^{\frac{-1}{2}}\chi(c_{\chi,\psi}) \sum_{y\in c_m}   \chi(y) \psi(c_{\chi,\psi}y\varpi^{-m}).$$
We need to  show that
$$ \sum_{y\in c_m}   \chi(y) \psi(c_{\chi,\psi}y\varpi^{-m})=\psi(c_{\chi,\psi}\varpi^{-m})\psi_{2^{-1}c_{\chi,\psi}\varpi^{-1}}\bigl(-b^2_{\chi,\psi}\bigr)\eta_{_\K}(2c_{\chi,\psi})G_\psi(\K).$$
We write
$$\sum_{y\in c_m}   \chi(y) \psi(c_{\chi,\psi}y\varpi^{-m})= \psi(c_{\chi,\psi}\varpi^{-m})   \sum_{y\in c_m}   \chi(y) \psi_{c_{\chi,\psi}\varpi^{-m}}(y-1).$$
By Lemma \ref{blemma},

\begin{equation}\nonumber
\begin{split}
 & \sum _{y\in c_m} \chi(y)  \psi(c_{\chi,\psi}y\varpi^{-m})= \\
\psi(c_{\chi,\psi}\varpi^{-m}) & \sum _{y\in c_m}
\psi_{2^{-1}c_{\chi,\psi}\varpi^{-m}}\bigl((y-1)^2-2(y-1)\bigr)\psi_{c_{\chi,\psi} b_{\chi,\psi}\varpi^{-\frac{m+1}{2}}}(y-1)\psi_{c_{\chi,\psi}\varpi^{-m}}(y-1)= \\
\psi(c_{\chi,\psi}\varpi^{-m}) &  \sum _{y\in c_m}
\psi_{2^{-1}c_{\chi,\psi}\varpi^{-m}}\bigl((y-1)^2\bigr)\psi_{c_{\chi,\psi} b_{\chi,\psi}\varpi^{-\frac{m+1}{2}}}(y-1).
\end{split}
\end{equation}
By changing the summation index $y\mapsto 1+a\varpi^{\frac{m-1}{2}}$ we move from summation over $c_m$ to summation over $\K$:
\begin{equation}\nonumber
\begin{split}
& \sum_{y\in c_m}\chi(y) \psi(c_{\chi,\psi}y\varpi^{-m})= \\  &\psi(c_{\chi,\psi}\varpi^{-m})  \sum_{a\in\K }
\psi_{2^{-1}c_{\chi,\psi}\varpi^{-1}}\bigl(a^2)\psi_{c_{\chi,\psi} \varpi^{-1}}(b_{\chi,\psi}a)=\\
 & \psi(c_{\chi,\psi}\varpi^{-m})  \sum_{a\in\K }
\psi_{2^{-1}c_{\chi,\psi}\varpi^{-1}}\bigl(a^2+2b_{\chi,\psi}a)=\\
& \psi(c_{\chi,\psi}\varpi^{-m})\psi_{2^{-1}c_{\chi,\psi}\varpi^{-1}}\bigl(-b^2_{\chi,\psi}\bigr)  \sum_{a\in\K }
\psi_{2^{-1}c_{\chi,\psi}\varpi^{-1}}\bigl((a+b_{\chi,\psi})^2\bigr)=\\
& \psi(c_{\chi,\psi}\varpi^{-m})\psi_{2^{-1}c_{\chi,\psi}\varpi^{-1}}\bigl(-b^2_{\chi,\psi}\bigr)  \sum_{k\in\K }
\psi_{2^{-1}c_{\chi,\psi}\varpi^{-1}}(k^2).
\end{split}
\end{equation}
Since we assume that the residual characteristic of $F$ is odd  it follows from  \eqref{gausspsi} and \eqref{quadgauss} that
$$ \sum_{k\in\K }
\psi_{2^{-1}c_{\chi,\psi}\varpi^{-1}}(k^2)= \eta_{_\K}(2^{-1}c_{\chi,\psi})G_\psi(\K)$$

\end{proof}
\begin{lem} \label{taumult} Suppose that $e(\chi)=m\geq 2$. If $m$ is odd assume in addition that the residual characteristic of $F$ is odd. Fix  $d \in \N$. Assume that  $d$ is relatively prime to the residual characteristic of $F$ and that  $\mu_d \subseteq F$.
\begin{enumerate}
\item If $d$ is odd then $$\tau(\chi ,\psi)^d=\tau(\chi_d ,\psi_d)$$
\item If $d$ is even then $$\tau(\chi ,\psi)^d=\tau(\chi_d ,\psi_d) \times \begin{cases} 1   & m \operatorname{\, is \,even}; \\q^{\frac{-1}{2}} \eta_{_\K}( 2d \cdot c_{\chi,\psi})G_\psi(\K)&   m \operatorname{ \, is \, odd}. \end{cases} $$
\end{enumerate}
\end{lem}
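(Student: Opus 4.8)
The plan is to pit the closed-form evaluations of $\tau$ in Lemma~\ref{tauvalue} against one another and reduce the claim to a purely formal bookkeeping of roots of unity. First I would dispose of the preliminaries. Since $d$ is prime to the residual characteristic of $F$, we have $d\in\Of^*$, hence $e(\psi_d)=e(\psi)=0$ and $\psi_d$ is normalized. Moreover, for $u\in\Of$ one has $(1+u\varpi^{m-1})^d\equiv 1+du\varpi^{m-1}\pmod{\Pf^m}$, because the remaining binomial terms lie in $\Pf^{2(m-1)}\subseteq\Pf^m$ (recall $m\ge 2$); since $u\mapsto du$ is invertible on $\K$ and the restriction of $\chi$ to $1+\Pf^{m-1}/1+\Pf^m$ is nontrivial, the same holds for $\chi^d$, so $\chi^d$ is ramified with $e(\chi^d)=m$ (alternatively, one may invoke Lemma~\ref{Jlemma}(2) once ramification is known). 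Thus Lemma~\ref{tauvalue} applies to both $(\chi,\psi)$ and $(\chi^d,\psi_d)$, and \eqref{c k prop} together with Lemma~\ref{bprop}(1) give $c_{\chi^d,\psi_d}=c_{\chi,\psi}$ and, when $m$ is odd, $b_{\chi^d,\psi_d}=b_{\chi,\psi}$.

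The case $m$ even is then immediate: Lemma~\ref{tauvalue} reads $\tau(\chi,\psi)=\chi(c_{\chi,\psi})\psi(c_{\chi,\psi}\varpi^{-m})$, so raising to the $d$-th power simply replaces $\chi$ by $\chi^d$ and $\psi$ by $\psi_d$ and yields $\tau(\chi,\psi)^d=\tau(\chi^d,\psi_d)$ — precisely the asserted identity (with trivial correction factor when $d$ is even). So it remains to treat $m$ odd. There I would substitute the second formula of Lemma~\ref{tauvalue} into $\tau(\chi,\psi)^d$: the factors $\chi(c_{\chi,\psi})$, $\psi(c_{\chi,\psi}\varpi^{-m})$ and $\psi_{2^{-1}c_{\chi,\psi}\varpi^{-1}}(-b_{\chi,\psi}^2)$ transform under the $d$-th power into exactly the corresponding factors of $\tau(\chi^d,\psi_d)$, and using $G_{\psi_d}(\K)=\eta_{_\K}(d)G_\psi(\K)$ (equation \eqref{cquadgauss}) the whole problem collapses to verifying
$$\frac{\tau(\chi,\psi)^d}{\tau(\chi^d,\psi_d)}=\eta_{_\K}(2c_{\chi,\psi})^{\,d-1}\,\eta_{_\K}(d)^{-1}\,q^{-(d-1)/2}\,G_\psi(\K)^{\,d-1},$$
the denominator being nonzero because $G_\psi(\K)^2=q\cdot sign(\K)\ne 0$ by \eqref{quadinv}.

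The remaining work is to evaluate this right-hand side using $G_\psi(\K)^2=q\cdot sign(\K)$ and $sign(\K)=\eta_{_\K}(-1)$. If $d$ is odd, then $\eta_{_\K}(2c_{\chi,\psi})^{d-1}=1$ and $G_\psi(\K)^{d-1}=q^{(d-1)/2}sign(\K)^{(d-1)/2}$, so the ratio becomes $\eta_{_\K}(d)\,sign(\K)^{(d-1)/2}$, which equals $1$ by Lemma~\ref{d and sign}(1); this gives the $d$-odd identity. If $d$ is even, then $\eta_{_\K}(2c_{\chi,\psi})^{d-1}=\eta_{_\K}(2c_{\chi,\psi})$, $G_\psi(\K)^{d-1}=q^{(d-2)/2}sign(\K)^{(d-2)/2}G_\psi(\K)$ and $\eta_{_\K}(d)^{-1}=\eta_{_\K}(d)$, so the ratio becomes $\eta_{_\K}(2d\cdot c_{\chi,\psi})\,sign(\K)^{(d-2)/2}\,q^{-1/2}\,G_\psi(\K)$, which is the claimed correction factor once one observes $sign(\K)^{(d-2)/2}=1$: this holds since $\mu_d\subseteq\K$, because if $4\nmid d$ then $(d-2)/2$ is even, while if $4\mid d$ then $4\mid q-1$ forces $sign(\K)=1$. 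I expect this last sign analysis — keeping the parities of the exponents of $\eta_{_\K}(2c_{\chi,\psi})$, of $q$ and of $G_\psi(\K)$ straight, together with the elementary observation $sign(\K)^{(d-2)/2}=1$ — to be the only genuinely delicate point; everything before it is a mechanical consequence of Lemmas~\ref{tauvalue}, \ref{cprop} and \ref{bprop}.
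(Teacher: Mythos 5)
Your proposal is correct and follows essentially the same route as the paper: evaluate both $\tau(\chi,\psi)^d$ and $\tau(\chi^d,\psi_d)$ via the closed forms of Lemma \ref{tauvalue}, use $c_{\chi^d,\psi_d}=c_{\chi,\psi}$ and $b_{\chi^d,\psi_d}=b_{\chi,\psi}$ from \eqref{c k prop} and Lemma \ref{bprop}, and then settle the residual sign with \eqref{quadinv}, \eqref{cquadgauss} and Lemma \ref{d and sign}, including the same case split on $d \bmod 4$ to see $sign(\K)^{(d-2)/2}=1$. The only difference is cosmetic: you spell out the verification that $e(\chi^d)=m$ via the binomial expansion, which the paper asserts without proof.
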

\begin{proof} By Lemma \ref{tauvalue}

\begin{equation}  \nonumber \tau(\chi ,\psi)^d=\chi^d (c_{\chi,\psi})\psi_d(c_{\chi,\psi}\varpi^{-m}) \times \begin{cases} 1   & m \operatorname{ \, is \, even}; \\ \psi_{d2^{-1}c_{\chi,\psi}\varpi^{-1}}\bigl(-b^2_{\chi,\psi}\bigr)\eta_{_\K}^d(2c_{\chi,\psi})q^{\frac {-d}{2}}G_\psi(\K)^d  &   m \operatorname{\, is \, odd}. \end{cases} \end{equation}
Since $d$ is relatively prime to the residual characteristic of $F$ and since $m\geq 2$, $e(\chi)=e(\chi^d)$. Also, by Lemmas \ref{cprop} and \ref{bprop}, $c_{\chi,\psi}=c_{\chi^d,\psi_d}$
and $b_{\chi,\psi}=b_{\chi^d,\psi_d}$. Hence
\begin{equation}  \nonumber \tau(\chi^d ,\psi_d)=\chi^d (c_{\chi,\psi})\psi_d(c_{\chi,\psi}\varpi^{-m}) \times \begin{cases} 1   & m \operatorname{ \, is \, even}; \\ \psi_{d2^{-1}c_{\chi,\psi}\varpi^{-1}}\bigl(-b^2_{\chi,\psi}\bigr)\eta_{_\K}(d 2c_{\chi,\psi})q^{-\half}G_\psi(\K)  &   m \operatorname{\, is \, odd}. \end{cases} \end{equation}
(for the cases where $m$ is odd we have used \eqref{cquadgauss}). Comparing the last two equations we deduce
$$\tau(\chi ,\psi)^d=\tau(\chi_d ,\psi_d) \times \begin{cases} 1   & m \operatorname{ \, is \, even}; \\ \eta_{_\K}(d)\eta_{_\K}^{d-1}( 2c_{\chi,\psi})q^{-\frac{d-1}{2}}G_\psi(\K)^{d-1}&   m \operatorname{\, is \, odd}. \end{cases} $$
The proof for the case where $m$ is even is now completed. We move to the cases where $m$ is odd. In this case we have shown that
$$\frac {\tau(\chi ,\psi)^d} {\tau(\chi^d ,\psi_d)} = \eta_{_\K}(d) \eta_{_\K}^{d-1}( 2c_{\chi,\psi})q^{-\frac{d-1}{2}}G_\psi(\K)^{d-1}.$$
Taking \eqref{quadinv} into account we deduce that
$$\frac {\tau(\chi ,\psi)^d} {\tau(\chi^d ,\psi_d)} = \begin{cases} \eta_{_\K}(d) \eta_{_\K}(-1)^{\frac{d-1}{2}}   & d \operatorname{ \, is \, odd}; \\ \eta_{_\K}(d 2c_{\chi,\psi})q^{-\half}G_\psi(\K)\eta_{_\K}(-1)^{\frac{d-2}{2}}  &   d \operatorname{\, is \, even}. \end{cases}$$
If $d$ is odd the lemma follows from the first item in Lemma \ref{d and sign}. If $d$ is even it is left to show that $\eta_{_\K}(-1)^{\frac{d-2}{2}}=1$. Indeed,  if $d\equiv 2 \, (\operatorname{mod }4)$ then $\frac{d-2}{2}$ is even and if $d\equiv 0 \, (\operatorname{mod }4)$ then $-1 \in {{\K}^*}^2$.
\end{proof}

\section{Main results} \label{mainsec}

\begin{thm} \label{tau theorem} Assume that the residual characteristic of $F$ is odd. Assume also that $d$ is relatively prime to the residual characteristic of  $F$ and that $\mu_d \subseteq F$. Let $J {\subseteq}\widehat{ F^*/{F^*}^d}$ be a lift of $\widehat{ {\K}^*/{\K^*}^d}$. Assume that $\chi^d$ is ramified. Denote $\psi_o=\psi_{\varpi^{e(\psi)}}.$
\begin{enumerate}
\item If $d$ is odd then $$ \prod_{ \eta\in J}{\tau}(\eta\chi ,\psi)=\tau(\chi^d ,\psi_d). $$
\item If $d$ is even then
$$\prod_{\eta \in J}\tau(\chi \eta,\psi)=  \tau(\chi^d,\psi_d) \begin{cases} \eta_{_\K}(c_{\chi,\psi_o}) & e(\chi) \, \operatorname{is} \,  \operatorname{even} ;\\  \eta_{_\K}(2d)q^{-\half}G_\psi(\K)  &  e(\chi) \, \operatorname{is} \,  \operatorname{odd}. \end{cases}$$
\end{enumerate}
\end{thm}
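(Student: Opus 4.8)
The plan is to bootstrap from Corollary \ref{hdtau1} (the conductor $\le 1$ case, which is just the classical relation \eqref{hd}) and the computations of Section \ref{comptau}, exploiting the fact that each twist $\eta\in J$ has conductor at most $1$ and is therefore ``small'' compared with $\chi$ once $e(\chi)\ge 2$. First I would reduce to normalized $\psi$: by \eqref{tauunc} each factor $\tau(\eta\chi,\psi)$ is unchanged if $\psi$ is replaced by $\psi_o=\psi_{\varpi^{e(\psi)}}$, and likewise $\tau(\chi^d,\psi_d)=\tau(\chi^d,(\psi_o)_d)$, while the term $\eta_{_\K}(c_{\chi,\psi_o})$ on the right already refers to $\psi_o$; so both sides are insensitive to this replacement and I may assume $\psi$ normalized. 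Since $\chi^d$ is ramified so is $\chi$; set $m=e(\chi)\ge 1$. By Lemma \ref{Jlemma} every $\eta\in J$ has $e(\eta)\le 1$, $e(\eta\chi)=m$, and $\mu_d\subseteq\K$, so that restriction to $\Of^*$ identifies $J$ with the cyclic group $\widehat{\K^*/{\K^*}^d}$ of order $d$. If $m=1$ I would simply invoke the $\chi^d$-ramified part of Corollary \ref{hdtau1}, noting that $m=1$ is odd and that its two branches match the claimed formula (which, for $e(\chi)$ odd, does not involve $c_{\chi,\psi_o}$).

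The substance is the case $m\ge 2$. Put $k=\lfloor m/2\rfloor\ge 1$; then $e(\eta)\le 1\le k$ for every $\eta\in J$, so Corollary \ref{taulowtwist} gives $\tau(\eta\chi,\psi)=\eta(c_{\chi,\psi})\tau(\chi,\psi)$, and hence
$$\prod_{\eta\in J}\tau(\eta\chi,\psi)=\Bigl(\prod_{\eta\in J}\eta(c_{\chi,\psi})\Bigr)\tau(\chi,\psi)^{d}.$$
So the product over $J$ has been traded for the single power $\tau(\chi,\psi)^d$ times a character sum at $c_{\chi,\psi}$.

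Next I would evaluate $\prod_{\eta\in J}\eta(c_{\chi,\psi})$ by transporting it along the restriction isomorphism $J\cong\widehat{\K^*/{\K^*}^d}$ to $\prod_{\bar\eta}\bar\eta(\bar c)$, where $\bar c\in\K^*$ is the residue of $c_{\chi,\psi}$; writing this cyclic group as $\langle\eta_o\rangle$ of order $d$, this product is $\eta_o(\bar c)^{d(d-1)/2}$, which equals $1$ when $d$ is odd (as $\eta_o(\bar c)^d=1$) and equals $\eta_{_\K}(c_{\chi,\psi})$ when $d$ is even, because $\eta_o^{d/2}$ is the unique character of order $2$ in $\widehat{\K^*/{\K^*}^d}$ — namely $\eta_{_\K}$, since $2\mid d$ — and $d-1$ is odd. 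I would then feed in Lemma \ref{taumult}. For $d$ odd this lemma gives $\tau(\chi,\psi)^d=\tau(\chi^d,\psi_d)$ and the character product is trivial, yielding item (1). For $d$ even the character product contributes $\eta_{_\K}(c_{\chi,\psi})$; if $m=e(\chi)$ is even, $\tau(\chi,\psi)^d=\tau(\chi^d,\psi_d)$, which produces exactly the factor $\eta_{_\K}(c_{\chi,\psi_o})$; if $m$ is odd, $\tau(\chi,\psi)^d=q^{-\half}\eta_{_\K}(2d\,c_{\chi,\psi})G_\psi(\K)\tau(\chi^d,\psi_d)$, and multiplying by $\eta_{_\K}(c_{\chi,\psi})$ and using $\eta_{_\K}(c_{\chi,\psi}^2)=1$ collapses this to $q^{-\half}\eta_{_\K}(2d)G_\psi(\K)\tau(\chi^d,\psi_d)$, which is the asserted formula.

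I do not expect a genuine obstacle: all the hard work — the summation-free form of $\tau$ in Lemmas \ref{tdelta} and \ref{tauvalue}, and the passage from $\tau(\chi,\psi)^d$ to $\tau(\chi^d,\psi_d)$ in Lemma \ref{taumult} — has already been done in Sections \ref{reschi}--\ref{comptau}. The only care required is in the two reductions (that normalizing $\psi$ is harmless, and that $m=1$ is genuinely the case covered by Corollary \ref{hdtau1}) and in the bookkeeping of the $\eta_{_\K}$-factors, in particular the identification $\eta_o^{d/2}=\eta_{_\K}$ for $d$ even and the cancellation $\eta_{_\K}(c_{\chi,\psi})\eta_{_\K}(2d\,c_{\chi,\psi})=\eta_{_\K}(2d)$ in the case $d$ even, $e(\chi)$ odd.
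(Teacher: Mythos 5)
Your proposal is correct and follows essentially the same route as the paper: reduce the case $e(\chi)=1$ to Corollary \ref{hdtau1}, and for $e(\chi)\ge 2$ use Corollary \ref{taulowtwist} to trade the product over $J$ for $\tau(\chi,\psi)^d\prod_{\eta\in J}\eta(c_{\chi,\psi})$, evaluate that character product ($1$ for $d$ odd, $\eta_{_\K}(c_{\chi,\psi_o})$ for $d$ even), and conclude with Lemma \ref{taumult}. Your write-up is in fact more careful than the paper's on the two reductions (normalizing $\psi$ and the $e(\chi)=1$ branch) and on the cancellation $\eta_{_\K}(c_{\chi,\psi})\eta_{_\K}(2d\,c_{\chi,\psi})=\eta_{_\K}(2d)$, all of which the paper leaves implicit.
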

\begin{proof}  If $e(\chi)=1$ then this Theorem is essentially equivalent to Hasse-Davenport product relation, see Corollary \ref{hdtau1}. We move to the case where $e(\chi)>1$.  By Corollary \ref{taulowtwist} we have
$$\prod_{\eta\in J}{\tau}(\eta\chi ,\psi)= {\tau}(\chi,\psi)^d   \prod_{ \eta \in J}\eta(c_{\chi,\psi})=  {\tau}(\eta,\psi)^d
\begin{cases}  1 & d \, \operatorname{is} \,  \operatorname{odd} ;\\  \eta_{_\K}(c_{\chi,\psi_o} ) &  d \, \operatorname{is} \,  \operatorname{even}. \end{cases}$$
The theorem now follows from Lemma \ref{taumult}.
\end{proof}
\begin{remark} From \eqref{c shift prop} it follows that if $a\in \Of^*$ then $\eta_{_\K}(c_{\chi,\psi_o})=\eta_{_\K}(a)\eta_{_\K}(c_{\chi,{\psi_a}_o})$. This implies that for a fixed $\chi$ the sign in the cases where $d$ and $e(\chi)$ are even depends on $\psi$, namely, it is not identically 1.
\end{remark}

\begin{thm} \label{mainres} Assume that the residual characteristic of $F$ is odd. Assume also that $d$ is relatively prime to the residual characteristic of  $F$ and that $\mu_d \subseteq F$. Let $J {\subseteq}\widehat{ F^*/{F^*}^d}$ be a lift of $\widehat{ {\K}^*/{\K^*}^d}$.
\begin{enumerate}
\item If $\chi^d$ is ramified then

\begin{equation} \label{mainepodd}\prod_{\eta \in J} \epsilon(1-s,(\chi\eta)^{-1},\psi)=q^{\frac{d-1}{2} \bigl(e(\psi)-e(\chi^d)\bigr)}\epsilon(1-ds,\chi^{-d},\psi_d).\end{equation}

\item If $\chi^{2d}$ is ramified then
\begin{equation} \label{mainmetaepodd}\prod_{\eta \in J} \widetilde{\epsilon}(1-s,(\chi\eta)^{-1},\psi)=q^{\frac{d-1}{2} \bigl(e(\psi)-e(\chi^{2d})\bigr)}\widetilde{\epsilon}(1-ds,\chi^{-d},\psi_d).\end{equation}

\item If $\chi^2$ is ramified then
\begin{equation} \label{gtilsatog} \prod_{\beta \in \widehat{ F^*/{F^*}^2}}\widetilde{\epsilon}(1-s,\chi^{-1}\beta,\psi)=q^{e(\psi)-e(\chi)}\epsilon^{2} (1-2s,\chi^{-2},{\psi_{_2}}) sign(F)^{e(\chi)}.\end{equation}

\end{enumerate}
\end{thm}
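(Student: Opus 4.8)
The plan is to reduce all three identities to the $\tau$-identity of Theorem \ref{tau theorem} by means of Proposition \ref{eptau}, which expresses $\epsilon(1-s,\chi^{-1},\psi)$ as $\bigl(\chi(\varpi)q^{\half-s}\bigr)^{e(\psi)-e(\chi)}\tau(\chi,\psi)$; after that substitution each identity is pure bookkeeping of scalars. \textbf{For \eqref{mainepodd}:} since $\chi^d$ is ramified, Lemma \ref{Jlemma}(2) gives $e(\chi\eta)=e(\chi^d)=e(\chi)$ for every $\eta\in J$, so all the exponents $e(\psi)-e(\chi\eta)$ coincide; applying Proposition \ref{eptau} to each left-hand factor and, using $e(\psi_d)=e(\psi)$, to $\epsilon(1-ds,\chi^{-d},\psi_d)$ on the right, the powers of $q^{-s}$ and of $\chi(\varpi)$ should cancel outright, leaving the claim $\bigl(\prod_{\eta\in J}\eta(\varpi)\bigr)^{e(\psi)-e(\chi^d)}\prod_{\eta\in J}\tau(\chi\eta,\psi)=\tau(\chi^d,\psi_d)$. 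Since $d$ is odd, $\prod_{\eta\in J}\eta$ is the trivial character (the product of all elements of a group of odd order), so the first factor drops out and Theorem \ref{tau theorem}(1) finishes the proof.

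\textbf{For \eqref{mainmetaepodd}:} I would substitute \eqref{eptildef}, so each factor becomes $\gamma(\psi)(\chi\eta)(-1)$ times the ratio $\epsilon(s+\half,\chi\eta,\psi)/\epsilon(2s,(\chi\eta)^2,\psi_2)$. Reindexing $\eta\mapsto\eta^{-1}$ and rewriting $\epsilon(s+\half,\chi\eta^{-1},\psi)=\epsilon\bigl(1-(\half-s),(\chi^{-1}\eta)^{-1},\psi\bigr)$ turns the product of numerators into an instance of \eqref{mainepodd} for the character $\chi^{-1}$ at $\half-s$; because $d$ is odd the map $\eta\mapsto\eta^2$ permutes $J$, so the same reindexing turns the product of denominators into an instance of \eqref{mainepodd} for $\chi^{-2}$ over $\psi_2$ at $1-2s$ (the hypotheses these need, $\chi^d$ and $\chi^{2d}$ ramified, both follow from $\chi^{2d}$ ramified). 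After shifting the resulting $\epsilon$-factors to the base points $ds+\half$ and $2ds$ with \eqref{epsilon old twist}, I expect the $q$-powers to collapse to the claimed exponent on both sides, the $\chi(-1)$-factors to cancel, $\prod_{\eta\in J}\eta(-1)$ to be $1$, and the leftover identity to be exactly $\gamma(\psi)^d=\gamma(\psi_d)$, which is Lemma \ref{weild}.

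\textbf{For \eqref{gtilsatog}:} the product now runs over all of $\widehat{F^*/{F^*}^2}$, a Klein four-group by Lemma \ref{Jlemma}(3). Substituting \eqref{eptildef} and using $\beta^2=1$, the four denominators all coincide and contribute $\epsilon(2s,\chi^2,\psi_2)^4$, while $\gamma(\psi)^4=1$ (odd residual characteristic) and $\prod_\beta\beta(-1)=\prod_\beta\beta(\varpi)=1$ (the four characters of a Klein four-group multiply to the trivial one). For the numerator $\prod_\beta\epsilon(s+\half,\chi\beta,\psi)$ I would pass to $\tau$-values by Proposition \ref{eptau} --- all conductors equal $e(\chi)$ by Lemma \ref{Jlemma}(2), since $\chi^2$ is ramified --- and, writing $\widehat{F^*/{F^*}^2}=J\cdot\langle\nu\rangle$ with $\nu$ the unramified quadratic character and using that $\tau$ is unaffected by unramified twists, rewrite $\prod_\beta\tau(\chi^{-1}\beta,\psi)=\bigl(\prod_{\eta\in J}\tau(\chi^{-1}\eta,\psi)\bigr)^2$, to which Theorem \ref{tau theorem}(2) with $d=2$ applies (here $\eta_{_\K}(2d)=\eta_{_\K}(4)=1$). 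Squaring its correction factor and invoking \eqref{quadinv} should turn it into $1$ if $e(\chi)$ is even and into $sign(\K)=sign(F)$ if $e(\chi)$ is odd, i.e.\ into $sign(F)^{e(\chi)}$ in both cases. Reassembling, converting $\tau(\chi^{-2},\psi_2)$ back to $\epsilon(1-2s,\chi^{-2},\psi_2)$ via Proposition \ref{eptau} together with $\tau(\chi^2,\psi_2)\tau(\chi^{-2},\psi_2)=\chi^2(-1)=1$ (the inversion \eqref{Tate gamma} carried through Proposition \ref{eptau}), and cancelling the remaining powers of $q$ and $\chi(\varpi)$, should give the right-hand side.

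\textbf{Main obstacle.} Nothing here is conceptual --- all the substance lives in Theorem \ref{tau theorem} --- so the difficulty is entirely in controlling the scalars, and above all in \eqref{gtilsatog}: I must check that the Gauss sum $G_\psi(\K)$ in the $d=2$ correction of Theorem \ref{tau theorem}(2) for odd $e(\chi)$ squares, through \eqref{quadinv}, to precisely the $sign(F)^{e(\chi)}$ of the statement, and that no stray powers of $q$ or $\chi(\varpi)$ survive the three separate invocations of Proposition \ref{eptau} at the unequal arguments $s$, $s+\half$, and $2s$.
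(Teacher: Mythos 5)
Your proposal is correct and follows essentially the same route as the paper: reduce \eqref{mainepodd} to Theorem \ref{tau theorem}(1) via Proposition \ref{eptau} (with $\prod_{\eta\in J}\eta(\varpi)=1$ because $d$ is odd), reduce \eqref{mainmetaepodd} to two applications of \eqref{mainepodd} plus Lemma \ref{weild} after substituting \eqref{eptildef} and using that $\eta\mapsto\eta^2$ permutes $J$, and reduce \eqref{gtilsatog} to the square of the $d=2$ case of Theorem \ref{tau theorem} together with \eqref{quadinv}. The only cosmetic deviation is in the third item, where you collapse the four-fold product to a square at the level of $\tau$ (using invariance under unramified twists) rather than at the level of $\epsilon$ via \eqref{epsilon twist}, and handle the passage between $\epsilon(2s,\chi^2,\psi_2)$ and $\tau(\chi^{-2},\psi_2)$ through the inversion $\tau(\chi^2,\psi_2)\tau(\chi^{-2},\psi_2)=1$; both bookkeeping choices check out.
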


\begin{proof}
We first prove \eqref{mainepodd}. From Proposition \ref{eptau} it follows that
$$\epsilon(1-ds,\chi^{-d},\psi_d)=\bigl(\chi^d(\varpi)q^{\half-ds}\bigr)^{e(\psi_d)-e(\chi^d)}\tau(\chi^d,\psi_d)$$
and that
$$\prod_{\eta \in J} \epsilon(1-s,(\chi\eta)^{-1},\psi)=\prod_{\eta \in J}\bigl(\chi\eta(\varpi)q^{\half-ds}\bigr)^{e(\psi)-e(\chi)}\tau(\chi\eta,\psi).$$
Since $J$ has no elements of order 2, $\prod_{\eta \in G}\eta(\varpi)=1.$ Thus, since  $e(\chi)=e(\chi^d)$, \eqref{mainepodd} follows from the first item in Theorem \ref{tau theorem}.

We now prove \eqref{mainmetaepodd}. Since $d$ is odd, the map $\eta \mapsto \eta^2$ is an automorphism of $J$ . Thus, recalling \eqref{eptildef}, it is sufficient to prove that $\gamma(\psi)^d=\gamma(\psi_d)$ and that
$$\prod_{\eta \in \widehat{{F^*}/{F^*}^d}}\frac{ \epsilon(s+\half,\chi\eta,\psi)}{ \epsilon(2s,\chi^{2}\eta,{\psi_{_2}})}=q^{\frac{d-1}{2} \bigl(e(\psi)-e(\chi^{2d})\bigr)}\frac{ \epsilon(ds+\half,\chi^d,\psi_d)}{ \epsilon(2s,\chi^{2d}\eta,{\psi_{2d}})}.$$
The first assertion is the content of Lemma \ref{weild}. The second assertion follows from  \eqref{mainepodd} and from \eqref{epsilon twist}.

Last{,} we prove \eqref{gtilsatog}{.} Since the residual characteristic of $F$ is odd then $[F^*:{F^*}^2]=4$ and $\gamma(\psi)^4=1$. Thus, due to \eqref{Tate gamma}, this proposition is equivalent to the assertion

$$\prod_{\beta \in \widehat{ F^*/{F^*}^2}}{\epsilon}(s+\half,\chi\beta,\psi)=q^{e(\psi)-e(\chi)}\epsilon^{2} (2s,\chi^{2},{\psi_{_2}}) sign(F)^{e(\chi)}$$
Let $\eta$ be a ramified character of order 2 and let $\eta'$ be an unramified character of order 2. Note that the restriction of $\eta$ to $\Of^*$ is the pull back of $\eta_{_\K}$ and that
 $\eta$ and $\eta'$ generate $\widehat{ F^*/{F^*}^2}$. Hence,
$$\prod_{\beta \in F^*/{F^*}^2}{\epsilon}(s+\half,\chi\beta,\psi)= {\epsilon}(s+\half,\chi,\psi){\epsilon}(s+\half,\chi\eta',\psi){\epsilon}(s+\half,\chi\eta,\psi){\epsilon}(s+\half,\chi\eta \eta',\psi).$$
By \eqref{epsilon twist},
$$\prod_{\beta \in F^*/{F^*}^2}{\epsilon}(s+\half,\chi \beta,\psi)= {\epsilon}(s+\half,\chi,\psi)^2{\epsilon}(s+\half,\chi\eta,\psi)^2.$$
We use Proposition \ref{eptau} and obtain
$$\prod_{\beta \in F^*/{F^*}^2}{\epsilon}(s+\half,\chi \beta,\psi)=\bigl(\chi(\varpi)q^s\bigr)^{4\bigl(e(\psi)-e(\chi)\bigr)}\bigl(\tau(\chi^{-1},\psi)\tau(\chi^{-1}\eta_{_\K},\psi)\bigr)^2$$
and
$${\epsilon}(2s,\chi^{2},\psi)=\bigl(\chi^2(\varpi)q^{2s-\half}\bigr)^{2\bigl(e(\psi)-e(\chi)\bigr)}\tau(\chi^{-2},\psi_2)^2.$$
It remains to show that
$$\bigl(\tau(\chi^{-1},\psi)\tau(\chi^{-1}\eta_{_\K},\psi)\bigr)^2=\tau(\chi^{-2},\psi_2)^2sign(F)^{e(\chi)}.$$
This follows by squaring the $d=2$ case of Theorem \ref{tau theorem} combined with \eqref{quadinv}.
\end{proof}

By Corollary \ref{hdtau1}, if $d$ is odd then the statement given in Theorem \ref{tau theorem} still holds if $\chi^d$ is unramified. However,  \eqref{mainepodd} is false if $\chi^d$ is unramified. In fact if $\chi^d$ is {un}ramified then the product in the left hand side of \eqref{mainepodd} depends on $J$, i.e., on the particular lift of $\widehat{ {\K}^*/{\K^*}^d}$ to $\widehat{ {F}^*/{F^*}^d}$, and in any case the map $s\mapsto \frac{ \prod_{\eta \in J} \epsilon(1-s,(\chi\eta)^{-1},\psi)}{\epsilon(1-ds,\chi^{-d},\psi_d)}^{-1}$ is not constant. Replacing the $\epsilon$-factors in  \eqref{mainepodd} by $\gamma$-factors does not solve the problem. Similar failures occur in the unramified cases of \eqref{mainmetaepodd} and \eqref{gtilsatog} as well.

\section{A representation theoretic application} \label{apsec}
In Sections \ref{coverrep}, \ref{pmea} and \ref{lcm} we recall some of our past work on certain $n$-fold  coverings of ${SL}_2({F})$ and $GL_2(F)$.
We refer the reader to \cite{Sz19}, \cite{GSS} and \cite{GSS2} for details and proofs. While we present some of  our results in greater generality, we ultimately restrict our attention in Section \ref{finalres} to the cases where  $n$ is relatively prime to the residual characteristic of $F$ and is not divisible by 4.  In that Section{, for $p$-adic fields of odd residual characteristic,} we shall state and prove  representation theoretic identities whose ramified cases are equivalent to  \eqref{mainepodd}, \eqref{mainmetaepodd} and \eqref{gtilsatog}, see the first two items in Proposition \ref {unramcomp} and Theorem \ref{ramdet}.

\subsection{Groups and representations} \label{coverrep}

Let $G=GL_2(F)$ and let  $G_o={{SL}_2({F})}$ be its derived group. For a subset $A$ of $G$ we define $A_o=A\cap G_o$. Let $N \cong {\ F}$ be the group of upper triangular unipotent matrices in $G$ and let $H$ be the subgroup of diagonal elements inside $G$. Denote $B=H \ltimes N$. We have  $B_o=H_o \ltimes N$.

Fix  $n \in \N$ and $c \in \Z$. Denote
$$n_c=\frac{n}{gcd(n,4c+1)}, \quad d=\begin{cases} n &  n \, \operatorname{is} \,  \operatorname{odd }; \\  \frac {n} {2}  &  n \, \operatorname{is} \,  \operatorname{even. }  \end{cases}, \quad d_c=\frac{d}{gcd(d,4c+1)}=\begin{cases} n_c&  n \, \operatorname{is} \,  \operatorname{odd }; \\  \frac {n_c} {2}  &  n \, \operatorname{is} \,  \operatorname{even. }  \end{cases}$$

We assume that $\mu_n \subseteq F^*$. Let $\widetilde{G}^{(n),c}=\widetilde{G}$ be the $c$-twisted $n$-fold cover of $G$ constructed by Kazhdan and Patterson in \cite{KP}. This is the same topological central extension of $G$ by $\mu_n$ introduced in Section 8.1 of \cite{GSS2}.  We have the short exact sequence
$$1\rightarrow \mu_n \rightarrow \widetilde{G}  \rightarrow  G \rightarrow  1.$$
$G$ acts on $\widetilde{G}$ by conjugations: for $h\in G$  and $\widetilde{g} \in \widetilde{G}$ we define $\widetilde{g}^h=\widetilde{h}\widetilde{g}\widetilde{h}^{-1}$, where $\widetilde{h}$ is any inverse image of $h$ in $\widetilde{G}$.

We shall denote by $\widetilde{A}$ the pre-image in $\widetilde{G}$ of a subset $A$ of $G$.  We note that  $\widetilde{G_o}$ is the derived group of $\widetilde{G}$ and that is it independent of $c$.

We fix an embedding $\mu_n \hookrightarrow \C^*$. Let $C$ be subgroup of $G$. A representation $\pi$ of $\widetilde{C}$ is called genuine if  $\mu_n$ acts via this embedding. If $g \in G$ normalizes  $C$ then $c\mapsto \pi(g^{-1}cg)$ is another genuine representation of $\widetilde{C}$. We denote it by $\pi^g$.

It is a fact that $\widetilde{G}$ splits uniquely over $N$. Hence we may identify this group with its embedding in  $\widetilde{G_o}$. Moreover, we have
${\widetilde{B_o}}={\widetilde{H_o}} \ltimes N,  \, {\widetilde{B}}={\widetilde{H}} \ltimes N$. Hence, as in the linear case we may extend any representation of  $\widetilde{H}$ ({representation of} $\widetilde{H_o})$ to { a representation of} $\widetilde{B}$ ({representation of} $\widetilde{B_o}$) by defining it to be trivial on $N$. We shall not distinguish between representations of  $\widetilde{B}$ ({representations of} $\widetilde{B_o}$) and representations of  $\widetilde{H}$ ({representations of }$\widetilde{H_o}$).

From this point  $(\sigma,V)$ and $(\sigma_o,V_o)$ will denote  genuine smooth irreducible representations of $\widetilde{H}$ and $\widetilde{H_o}$ respectively. Both representations are finite dimensional. Precisely,  $$\dim V_o=\sqrt{[F^*:{F^*}^d]}, \, \, \dim V=\sqrt{[F^*:{F^*}^n][F^*:{F^*}^{n_c}]}.$$ The isomorphism classes of $\sigma$ and $\sigma_o$ are determined by their central characters. The genuine characters of $Z(\widetilde{H_o})$, the center of $\widetilde{H_o}$, are parameterized by the set of $d^{th}$ powers of the characters of $F^*$ . If $\sigma_o$ is mapped to $\chi^d$ via this parametrization we say that $\chi^d$ corresponds to $\sigma_o$. We note that this parametrization is canonical unless $n\equiv 2 \, (\operatorname{mod }4)$. In this case the parametrization depends on an additive character of $F$, and we fix $\psi$ to be this character. See Section 4.2 of \cite{Sz19} for exact details.

 If $n \equiv 2 \, (\operatorname{mod }4)$ then for $\beta \in \widehat{F^*/{F^*}^2}$ we denote by $\beta\sigma_o$ the  genuine smooth irreducible representation of $\widetilde{H_o}$ whose corresponding character is $(\beta\chi)^d=\beta\chi^d.$

Clearly, $Z(\widetilde{H})\cap \widetilde{G_o}  \subseteq Z(\widetilde{H_o})$. This inclusion is strict if and only if $n$ is even. For all $n$, the set of genuine characters of $Z(\widetilde{H})\cap \widetilde{G_o} $ is parameterized canonically by the set of characters of  $n^{th}$ powers of the characters of $F^*$, see Section 10.1 of \cite{GSS2}. If the restriction of the central character of $\sigma$ ({ the central character of} $\sigma_o)$ to $Z(\widetilde{H})\cap \widetilde{G_o} $ is mapped to $\chi^n$ under this parametrization we say that $\chi^n$ is the linear character associated with  $\sigma$ ({ with} $\sigma_o$). If $n$ is odd the linear character of the associated with $\sigma_o$  is equal to the character corresponding to $\sigma_o$ . If $n$ is even then  the linear character associated with $\sigma_o$  is the square of  the character corresponding to $\sigma_o$.

Let $\delta$ be the modular character of $H$ defined by $diag(a,b) \mapsto \ab \frac a b \ab^{\half}$.  We shall continue to denote by $\delta$ the non-genuine character of  $\widetilde{H}$ defined by setting $\delta(\widetilde{h})=\delta(h)$ where  $h \in H$ and $\widetilde{h}$ is an inverse image in $\widetilde{H}$ of $h$. We shall denote by $\delta_o$ be the restriction of $\delta$ to $\widetilde{H_o}$.

For $s\in \C$ we now define the normalized parabolic inductions
$$I(\sigma,s)=Ind_{\widetilde{B}}^{\widetilde{G}} \delta^{s+\half}\otimes \sigma, \, \, I({\sigma_o},s)=Ind_{\widetilde{B_o}}^{\widetilde{G_o}} \delta_o^{s+\half}\otimes \sigma_o.$$

\begin{lem} \label{restlema} {\cite{GSS2}, Theorem 8.15}
\begin{enumerate}
\item $I(\sigma_o,s)$ appears in the restriction of $I(\sigma,s)$ to $\widetilde{G_o}$ if and only if the  linear characters associated with $\sigma$ and $\sigma_o$ are equal.
\item Suppose that $I(\sigma_o,s)$ appears in the restriction of $I(\sigma,s)$ to $\widetilde{G_o}$
\begin{enumerate}

\item If $n$ is odd then
$$I(\sigma,s)\mid_{\widetilde{G_o}}=n_c\ab n_c \ab^{-\half} I(\sigma_o,s).$$
\item If  $n\equiv 2 \, (\operatorname{mod }4)$ then
$$I(\sigma,s)\mid_{\widetilde{G_o}}=\bigoplus_{\widehat{F^*/{F^*}^2}} d_c\ab d_c \ab^{-\half} I(\beta\sigma_o,s).$$
\end{enumerate}
\end{enumerate}
\end{lem}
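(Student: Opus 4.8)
The plan is to reduce the branching problem to the covering torus by a Mackey argument, and then to resolve that torus question by Clifford theory together with the Stone--von Neumann classification of genuine irreducibles of the Heisenberg-type covers $\widetilde{H}$ and $\widetilde{H_o}$. First I would observe that $\det$ carries the diagonal torus $H$ onto $F^*=G/G_o$, so $BG_o=G$ and hence $\widetilde{B}\widetilde{G_o}=\widetilde{G}$, while $\widetilde{B}\cap\widetilde{G_o}=\widetilde{B_o}$ and $N$ lies in both Borel subgroups. Mackey's restriction formula then collapses to a single double coset and gives
$$I(\sigma,s)\mid_{\widetilde{G_o}}\;\cong\;Ind_{\widetilde{B_o}}^{\widetilde{G_o}}\bigl(\delta_o^{s+\half}\otimes(\sigma\mid_{\widetilde{H_o}})\bigr),$$
using that $\delta$ restricts to $\delta_o$ and that the modulus characters of $B$ and $B_o$ are compatible, so normalized induction restricts to normalized induction. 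Since parabolic induction is additive, the whole statement comes down to decomposing the finite-dimensional representation $\sigma\mid_{\widetilde{H_o}}$ into genuine irreducibles of $\widetilde{H_o}$ and inducing each summand.

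For the torus step I would use that $\widetilde{H}$ and $\widetilde{H_o}$ are finite-by-abelian of Heisenberg type: the commutator induces a nondegenerate alternating $\mu_n$-valued pairing on $\widetilde{H}/Z(\widetilde{H})$ and on $\widetilde{H_o}/Z(\widetilde{H_o})$, so a genuine irreducible is classified by, and exists for, each genuine central character --- this is the content of Section~4 of \cite{Sz19} and Section~8 of \cite{GSS2}. Because $\widetilde{H_o}\trianglelefteq\widetilde{H}$ with abelian quotient $\widetilde{H}/\widetilde{H_o}\cong F^*$, Clifford theory forces $\sigma\mid_{\widetilde{H_o}}$ to be $e$ copies of a single $\widetilde{H}$-conjugacy orbit, for some multiplicity $e$. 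Now $Z(\widetilde{H})\cap\widetilde{G_o}\subseteq Z(\widetilde{H_o})$ lies in the center of $\widetilde{H}$, hence is fixed pointwise under conjugation, so every $\sigma_o$ in that orbit has central character agreeing with that of $\sigma$ on $Z(\widetilde{H})\cap\widetilde{G_o}$; conversely, the $\widetilde{H}$-conjugation action on the genuine irreducibles of $\widetilde{H_o}$ with a prescribed central character on $Z(\widetilde{H})\cap\widetilde{G_o}$ is transitive (again verified in \cite{Sz19}, \cite{GSS2}), so the orbit is exactly the set of all such $\sigma_o$. By the parametrizations recalled just before the lemma, the restriction to $Z(\widetilde{H})\cap\widetilde{G_o}$ of the central character of $\sigma$ (resp.\ $\sigma_o$) is precisely the linear character associated with $\sigma$ (resp.\ $\sigma_o$); hence $I(\sigma_o,s)$ occurs in $I(\sigma,s)\mid_{\widetilde{G_o}}$ if and only if these linear characters coincide, which is item~1.

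It then remains to compute the multiplicities by a dimension count, using $[F^*:{F^*}^m]=m^2\ab m\ab^{-1}$ whenever $\mu_m\subseteq F^*$, so that $\dim V_o=d\ab d\ab^{-\half}$ and $\dim V=\sqrt{[F^*:{F^*}^n][F^*:{F^*}^{n_c}]}$. If $n$ is odd then $d=n$ and $Z(\widetilde{H})\cap\widetilde{G_o}=Z(\widetilde{H_o})$, so a single $\sigma_o$ occurs, with $e=\dim V/\dim V_o=\sqrt{[F^*:{F^*}^{n_c}]}=n_c\ab n_c\ab^{-\half}$; applying $Ind_{\widetilde{B_o}}^{\widetilde{G_o}}$ to $\delta_o^{s+\half}\otimes e\sigma_o$ gives item~2(a). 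If $n\equiv 2\ (\operatorname{mod }4)$ then $d=n/2$ is odd and $Z(\widetilde{H})\cap\widetilde{G_o}$ has index $[F^*:{F^*}^2]$ in $Z(\widetilde{H_o})$, the genuine central characters of $\widetilde{H_o}$ extending a fixed one being exactly those of the $\beta\sigma_o$ with $\beta\in\widehat{F^*/{F^*}^2}$; so $\sigma\mid_{\widetilde{H_o}}=e\bigoplus_{\beta}\beta\sigma_o$, and the same bookkeeping (now also tracking $\ab 2\ab$) yields $e=d_c\ab d_c\ab^{-\half}$, which is item~2(b).

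The hard part is not this formal skeleton but the structure theory of the Kazhdan--Patterson covering torus on which it rests: the nondegeneracy of the commutator pairing (needed for Stone--von Neumann to apply), the precise description of $Z(\widetilde{H})$ and of the inclusion $Z(\widetilde{H})\cap\widetilde{G_o}\subseteq Z(\widetilde{H_o})$ --- including its index and the transitivity of conjugation in the $n\equiv 2\ (\operatorname{mod }4)$ case --- and the identification of the cyclic invariants of the twisted cocycle with the power-index groups $F^*/{F^*}^m$. All of this is carried out in \cite{Sz19} and \cite{GSS2}, and the twist parameter $c$ enters the final formulas only through these structural constants, via $n_c$ and $d_c$.
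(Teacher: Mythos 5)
The paper offers no proof of this lemma---it is quoted directly from \cite{GSS2}, Theorem 8.15---so there is nothing to compare step by step; your Mackey reduction to the covering torus followed by Clifford theory and Stone--von Neumann, with the multiplicities pinned down by the dimension count $\dim V_o=d\ab d\ab^{-\half}$ and $\dim V=nn_c\ab nn_c\ab^{-\half}$, is the standard route and is the one the cited source follows. The outline is correct, and you have rightly identified the genuinely hard inputs (nondegeneracy of the commutator pairing, the precise description of $Z(\widetilde{H})\cap\widetilde{G_o}$ inside $Z(\widetilde{H_o})$, and transitivity of the conjugation action on the orbit) as structural facts that must be imported from \cite{Sz19} and \cite{GSS2} rather than derived from the statement itself.
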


We note that if $gcd(p,n)=1$ then $\widetilde{G}$ splits over $GL_2(\Of)$. Hence we have the notion of unramified genuine principal series representations. It is a fact that  $I(\sigma_o,s)$ appears in the restriction of some unramified genuine principal series representation of $\widetilde{G}$ if and only if the linear character associated with $\sigma_o$ is unramified. We shall not use this fact.

\subsection{Plancherel measures} \label{pmea}

Let $w_{_0}=\left( \begin{array}{cc} {0} & {1} \\ {-1} & {0} \end{array} \right)$ be a representative of the non-trivial Weyl element in $G_o$. Consider now the standard intertwining operators involving integration over $N${:}

$$A(\sigma,s):I(\sigma,s) \rightarrow I(\sigma^w,{-s}), \quad A_o({\sigma}_o,s):I(\sigma_o,s) \rightarrow I(\sigma_o^w,{-s}).$$

See Section 9.1 of \cite{GSS2} for exact definitions. The Plancherel measures $\mu(\sigma,s)$ and   $\mu(\sigma_o,s)$ are the rational functions in $q^{-ns}$ defined by the relations
$$A(\sigma^w,{-s})\circ A(\sigma_s)=\mu(\sigma,s)^{-1}Id, \quad A(\sigma_o^w,{-s})\circ A(\sigma_o,s)=\mu(\sigma_o,s)^{-1}Id.$$

Both the intertwining operators and the Plancherel measures depend on $e(\psi)$ via a choice of Haar measures. Following the conventions in the literature we suppress this dependence.

Since $N \subseteq \widetilde{G_o}$, the intertwining operators commute with the restriction from $ \widetilde{G}$ to $ \widetilde{G_o}$. The following lemma is a direct consequence of this fact.
\begin{lem} \label{oneplan} (\cite{GSS2}, Corollary 10.2). If the linear characters associated with $\sigma_o$ and $\sigma$ are equal then $\mu(\sigma_o,s)=\mu(\sigma,s)$.
\end{lem}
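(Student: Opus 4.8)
The plan is to deduce the equality from the branching law of Lemma~\ref{restlema} together with the observation that the standard intertwining operators, being given by integration over $N$ and $N\subseteq\widetilde{G_o}$, are insensitive to whether one works over $\widetilde{G}$ or over $\widetilde{G_o}$. First I would use the hypothesis: since the linear characters associated with $\sigma$ and $\sigma_o$ agree, Lemma~\ref{restlema} shows that $I(\sigma_o,s)$ occurs as a direct summand of $I(\sigma,s)\mid_{\widetilde{G_o}}$. Concretely, as $G=B\cdot G_o$ one has $\widetilde G=\widetilde B\,\widetilde{G_o}$, hence $I(\sigma,s)\mid_{\widetilde{G_o}}=\mathrm{Ind}_{\widetilde{B_o}}^{\widetilde{G_o}}\big((\delta^{s+\half}\otimes\sigma)\mid_{\widetilde{B_o}}\big)$, and a choice of $\widetilde{H_o}$-embedding $V_o\hookrightarrow V$ realizes $I(\sigma_o,s)$ as a subrepresentation $\iota\colon I(\sigma_o,s)\hookrightarrow I(\sigma,s)\mid_{\widetilde{G_o}}$. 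Twisting by $w\in G_o$ (which normalizes $\widetilde H$ and $\widetilde{H_o}$) gives $\sigma_o^w\subseteq\sigma^w\mid_{\widetilde{H_o}}$ and a corresponding embedding $\iota^w\colon I(\sigma_o^w,-s)\hookrightarrow I(\sigma^w,-s)\mid_{\widetilde{G_o}}$.

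Next I would check the compatibility of the intertwining operators with these embeddings. The operator $A(\sigma,s)$ is given by an integral of the form $f\mapsto\int_N f(w^{-1}n\,\cdot\,)\,dn$, with the normalizing constant and the Haar measure on $N$ fixed in Section~9.1 of \cite{GSS2}. If $f$ lies in $\iota(I(\sigma_o,s))$, that is, takes values in $V_o$, then so does $A(\sigma,s)f$, and the resulting section is exactly $\iota^w$ applied to $A_o(\sigma_o,s)$ of the corresponding section of $I(\sigma_o,s)$ — this uses nothing beyond $N\subseteq\widetilde{G_o}$ and the fact that $A(\sigma,s)$ and $A_o(\sigma_o,s)$ are defined by the same integral with the same normalization. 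Thus $A(\sigma,s)\circ\iota=\iota^w\circ A_o(\sigma_o,s)$, and the identical statement holds with $(\sigma,s)$ replaced by $(\sigma^w,-s)$.

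Finally I would restrict the defining relation $A(\sigma^w,-s)\circ A(\sigma,s)=\mu(\sigma,s)^{-1}Id$ to the subspace $\iota(I(\sigma_o,s))$. By the previous step the left-hand side equals $\iota\circ\big(A_o(\sigma_o^w,-s)\circ A_o(\sigma_o,s)\big)$, which by definition of $\mu(\sigma_o,s)$ equals $\mu(\sigma_o,s)^{-1}\iota$. Since $\mu(\sigma,s)^{-1}$ and $\mu(\sigma_o,s)^{-1}$ are scalars and $\iota$ is injective, this forces $\mu(\sigma_o,s)=\mu(\sigma,s)$.

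The one point requiring genuine care — the main obstacle — is the bookkeeping in the second paragraph: one must verify that the explicit integral defining $A(\sigma,s)$, when restricted to the chosen $\widetilde{G_o}$-subrepresentation $\iota(I(\sigma_o,s))$, reproduces $A_o(\sigma_o,s)$ exactly (not merely up to some unknown scalar) and lands in the $I(\sigma_o^w,-s)$-summand of $I(\sigma^w,-s)\mid_{\widetilde{G_o}}$ rather than in some other constituent. This is precisely what is ensured by the coincidence of the Haar measures and normalizing factors in the definitions of $A$ and $A_o$ and by the fact that $N\subseteq\widetilde{G_o}$, so the argument is really the remark already made before the statement of the lemma, carried out carefully.
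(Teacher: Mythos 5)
Your argument is correct and is essentially the paper's own: the paper derives the lemma directly from the observation (stated just before it) that, since $N\subseteq\widetilde{G_o}$, the standard intertwining operators commute with restriction from $\widetilde{G}$ to $\widetilde{G_o}$, so that the defining scalar relations for $\mu(\sigma,s)$ and $\mu(\sigma_o,s)$ compare on the summand $I(\sigma_o,s)$ exactly as you describe. Your write-up merely makes explicit the bookkeeping that the paper delegates to \cite{GSS2}, Corollary 10.2.
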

The Plancherel measures associated with $I(\sigma_o,s)$  was computed in \cite{GoSz} and \cite{Sz19}. Here we just recall the formulas in the cases where $gcd(p,n)=1$.
\begin{lem} \label{GoSz lem} (\cite{GoSz}, Theorem 5.1)
Assume that $gcd(p,n)=1$. Assume that the linear character associated with $\sigma_o$ is $\chi^n$.
$$\mu(\sigma_o,s)^{-1}= q^{e(\psi)-e(\chi^n)}\frac{L \bigl(ns,\chi^n \bigr)L \bigl(-ns,\chi^{-n}\bigr)}{L \bigl(1-ns,\chi^{-n} \bigr)L \bigl(1+ns,\chi^{n}\bigr)}.$$
\end{lem}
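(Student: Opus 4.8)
The plan is to reduce the computation of $\mu(\sigma_o,s)$ to a computation on the cover $\widetilde{G}=\widetilde{GL_2(F)}$, where one can work with a one-dimensional genuine representation of the center and exploit the known structure of the Kazhdan--Patterson cover. First I would invoke Lemma \ref{oneplan}: choosing a genuine irreducible $(\sigma,V)$ of $\widetilde{H}$ whose linear character associated with it is also $\chi^n$ (this is possible by the first item of Lemma \ref{restlema}), we get $\mu(\sigma_o,s)=\mu(\sigma,s)$. So it suffices to compute $\mu(\sigma,s)$. The reason this is the right move is that on $\widetilde{G}$ the center $Z(\widetilde{G})$ is large enough that $\sigma$ restricted to it determines a genuine character, and the intertwining operator $A(\sigma,s)$ can be analyzed through a rank-one reduction to $\widetilde{SL_2}$-type Jacquet integrals where the relevant local integral is essentially a Tate-type integral attached to $\chi^n$.

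Next I would write the composition $A(\sigma^w,-s)\circ A(\sigma,s)$ as an integral over $N\times N$ and carry out the standard unipotent change of variables, reducing it to an integral over $F^*$ (or $F$) whose integrand involves the matrix coefficient of $\sigma$ along the torus. Because $\gcd(p,n)=1$, the cover splits over $GL_2(\mathcal O_F)$ and one can track conductors cleanly; the scalar that emerges is, up to the power of $q$ accounting for the normalization of Haar measure by $e(\psi)$, a ratio of Tate local $L$-factors for $\chi^n$ and $\chi^{-n}$ evaluated at $ns$ and $-ns$ — this is where the parameter $q^{-ns}$ rather than $q^{-s}$ enters, since the genuine principal series is induced using $\delta^{s+\frac12}$ but the central character of a genuine representation of $\widetilde{H}$ only sees $n$th powers. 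Concretely I expect to recover
$$\mu(\sigma,s)^{-1}=q^{e(\psi)-e(\chi^n)}\,\frac{L(ns,\chi^n)\,L(-ns,\chi^{-n})}{L(1-ns,\chi^{-n})\,L(1+ns,\chi^n)},$$
which is exactly the claimed formula after applying Lemma \ref{oneplan} in reverse. In the unramified case the factor $q^{e(\psi)-e(\chi^n)}=q^{e(\psi)}$ combines with the $L$-factors to give the familiar Plancherel measure with poles, consistent with the remark in the introduction; in the ramified case all four $L$-factors are $1$ and one is left with $q^{e(\psi)-e(\chi^n)}$.

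The main obstacle will be the precise bookkeeping of the rank-one reduction on the cover: unlike the linear case, $N$ sits inside $\widetilde{G_o}$ by the canonical splitting, the torus $\widetilde{H}$ acts on $N$ by conjugation with a cocycle, and one must verify that the cocycle contributions to the $N\times N$ integral collapse so that the resulting $F^*$-integral genuinely matches Tate's integral for $\chi^n$ (with the correct argument $ns$). This is carried out in \cite{GoSz} and \cite{Sz19}; I would cite those computations for the cocycle identities and the reduction to the $\widetilde{SL_2}$ block, and then only need to identify the resulting one-variable integral, together with its normalization by the $\psi$-self-dual measure, with the stated ratio of $L$-factors times $q^{e(\psi)-e(\chi^n)}$. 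A secondary point requiring care is confirming that $\mu(\sigma,s)$ really is a rational function in $q^{-ns}$ (not $q^{-s}$), which follows because the genuine central character of $\widetilde H$ factors through $n$th powers, so the torus integral only involves $\chi^n$ twisted by $|\cdot|^{ns}$.
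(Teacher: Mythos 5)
First, note that the paper does not actually prove Lemma \ref{GoSz lem}: it is imported verbatim from \cite{GoSz}, Theorem 5.1 (the surrounding text says only that the Plancherel measure ``was computed in \cite{GoSz} and \cite{Sz19}'' and that the formula is being recalled). So there is no internal proof to match your argument against; the only fair comparison is with the strategy of the cited computation, which works directly with the intertwining operators on the cover of $SL_2(F)$ rather than passing through $\widetilde{GL_2(F)}$.

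Measured against that, your outline is the right general shape (Gindikin--Karpelevich-type computation: write $A(\sigma_o^w,-s)\circ A(\sigma_o,s)$ as an integral over $N\times N$, reduce to a torus integral, identify it with a Tate-type integral for $\chi^n$ in the variable $q^{-ns}$, with $q^{e(\psi)-e(\chi^n)}$ coming from the Haar-measure normalization), but as written it is not a proof: the entire content of the lemma sits in the step you defer to \cite{GoSz} and \cite{Sz19}, namely that the cocycle contributions collapse and the resulting $F^*$-integral is exactly Tate's integral for $\chi^n$ at $ns$ with the stated constant. Since that is the very statement being cited, the proposal is circular as an independent argument. Two smaller inaccuracies: the detour through $\widetilde{G}$ via Lemma \ref{oneplan} buys you nothing, because a genuine irreducible representation $\sigma$ of $\widetilde{H}$ is still of dimension $\sqrt{[F^*:{F^*}^n][F^*:{F^*}^{n_c}]}>1$, so the center of $\widetilde{G}$ does not reduce you to a one-dimensional situation as you suggest (and Lemma \ref{oneplan} itself comes from \cite{GSS2}, which postdates the result you are trying to prove); and the reason the answer is a function of $q^{-ns}$ is more accurately that the torus integral only detects the restriction of $\sigma_o$ to $Z(\widetilde{H})\cap\widetilde{G_o}$, whose genuine characters are parameterized by $n$-th powers --- your phrasing in terms of the central character of $\widetilde{H}$ conflates the $GL_2$ and $SL_2$ pictures. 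If you want a self-contained proof you must actually carry out the splitting of $\widetilde{G_o}$ over $Sp_{2}(\Of)=SL_2(\Of)$ (available since $gcd(p,n)=1$) and the explicit cocycle bookkeeping; otherwise the honest statement is that you are re-deriving the citation from itself.
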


Using the last two lemmas we identify the powers of $q$ appearing in Theorem  \ref{mainres} as powers of Plancherel measures:
\begin{cor} \label{tobeused}
Assume that $gcd(p,n)=1$. Assume that the linear character associated with  both $\sigma$ and $\sigma_o$ is $\chi^n$. If $\chi^n$ is ramified then $$\mu(\sigma_o,s)^{-1}=\mu(\sigma,s)^{-1}=q^{e(\psi)-e(\chi^n)}.$$
\end{cor}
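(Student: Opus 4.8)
The plan is to simply combine Lemma \ref{GoSz lem} with the elementary vanishing of the $L$-factor ratio in the ramified case. First I would invoke Lemma \ref{GoSz lem}: since we assume $\gcd(p,n)=1$ and that the linear character associated with $\sigma_o$ is $\chi^n$, we have
$$\mu(\sigma_o,s)^{-1}= q^{e(\psi)-e(\chi^n)}\frac{L \bigl(ns,\chi^n \bigr)L \bigl(-ns,\chi^{-n}\bigr)}{L \bigl(1-ns,\chi^{-n} \bigr)L \bigl(1+ns,\chi^{n}\bigr)}.$$
Now, because $\chi^n$ is assumed ramified, so is $\chi^{-n}$, and hence by the very definition of the Tate $L$-function recalled in Section \ref{tp} all four factors $L(ns,\chi^n)$, $L(-ns,\chi^{-n})$, $L(1-ns,\chi^{-n})$ and $L(1+ns,\chi^n)$ are identically $1$. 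Therefore the ratio is $1$ and $\mu(\sigma_o,s)^{-1}=q^{e(\psi)-e(\chi^n)}$.

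Next I would handle the equality $\mu(\sigma,s)=\mu(\sigma_o,s)$. Here I invoke Lemma \ref{oneplan}: since by hypothesis the linear characters associated with $\sigma$ and $\sigma_o$ are both equal to $\chi^n$ — in particular equal to each other — that lemma gives $\mu(\sigma,s)=\mu(\sigma_o,s)$ directly. Combining the two computations yields
$$\mu(\sigma_o,s)^{-1}=\mu(\sigma,s)^{-1}=q^{e(\psi)-e(\chi^n)},$$
which is the assertion.

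There is essentially no obstacle here; the only point requiring a moment's care is checking that the hypothesis ``$\chi^n$ is ramified'' really does force all four local $L$-functions in Lemma \ref{GoSz lem} to be trivial — but this is immediate from the case distinction in the definition of $L(s,\chi)$, since an unramified twist of a ramified character remains ramified and $\chi^{\pm n}$ are ramified exactly when $\chi^n$ is. Once that is observed, the corollary is just the concatenation of Lemmas \ref{oneplan} and \ref{GoSz lem}.
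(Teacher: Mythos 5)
Your proof is correct and follows exactly the route the paper intends: the corollary is stated immediately after Lemmas \ref{oneplan} and \ref{GoSz lem} precisely so that it is "just the concatenation" of them, with the ramified hypothesis trivializing all four local $L$-factors. Nothing to add.
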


\subsection{Local coefficient matrices and a related invariant} \label{lcm}
The map $x\mapsto \left( \begin{array}{cc} {1} & {x} \\ {0} & {1} \end{array} \right)$ is an isomorphism from $F$ to $N$. In particular, $n(x) \mapsto \psi(x)$ is a character of $N$. We shall continue to denote it by $\psi$. Let $\pi$ be a representation of  $\widetilde{G}$ or $\widetilde{G_o}$. The space $Hom_N(\pi,\psi)$ is called the space of $\psi$-Whittaker functionals on $\pi$ and is denoted by $Wh_\psi(\pi)$.  Unlike the linear case, a Whittaker functional is generally not unique, \cite{GSS}, Section 2. In particular,
$$\dim Wh_\psi\bigl(I(\sigma,s)\bigr)=\dim V, \, \dim Wh_\psi\bigl(I(\sigma_o,s)\bigr)=\dim V_o.$$
By duality, the standard intertwining operators $A(\sigma,s)$ and $A(\sigma_o,s)$ give rise to linear maps between  spaces of $\psi$-Whittaker functionals:
$$A_\psi(\sigma,s):Wh_\psi\bigl(I(\sigma^w,-s)\bigr) \rightarrow Wh_\psi\bigl(I(\sigma,s)\bigr),$$ $${A_o}_{\psi}(\sigma_o,s):Wh_\psi\bigl(I(\sigma_o^w,-s)\bigr) \rightarrow Wh_\psi\bigl(I(\sigma_o,s)\bigr).$$

The spaces, $Wh_\psi\bigl(I(\sigma_o^w,-s)\bigr)$ and  $Wh_\psi\bigl(I(\sigma_o,s)\bigr)$ are canonically identified with $\widehat{V_o}$, the space of linear  functionals on $V_{o}$ by means of Jacquet-type integrals, see Section 3.2 of \cite{GSS2}. Using these identifications, ${A_o}_\psi(\sigma_o,s)$ is viewed as a linear operator acting on a linear space. A matrix representing this map is called a local coefficients matrix associated with $\sigma_o$ and $\psi$. Its characteristic polynomial is an invariant of the inducing datum $\sigma_o$ and $s$ and the Whittaker character $\psi$. For the same reasons we also have a local coefficients matrix associated with $\sigma$ and $\psi$.

Denote by $D_o(\sigma_o,s,\psi)$ and $D(\sigma,s,\psi)$
the determinant of ${A_o}_\psi(\sigma_o,s)$  and $A_\psi(\sigma,s)$ respectively. In the linear case,  namely in the $n=1$ case, these are equal to the reciprocal of Shahidi's local coefficients, \cite{Shabook}. Hence, it is expected that  $D_o(\sigma_o,s,\psi)$ and $D(\sigma,s,\psi)$ are related to $\gamma$-factors. The following relations between $D(\sigma,s,\psi)$ and $D_o(\sigma_o,s,\psi)$  is a direct consequence of Lemma \ref{restlema} combined with the fact that the intertwining operators commute with the restriction from
$\widetilde{G}$ to $ \widetilde{G_o}$:
\begin{prop} \label{reldet} (\cite{GSS2}, Theorem 10.9) Suppose that $\sigma_o$ appears in the restriction of $\sigma$ to $\widetilde{H_o}$.
\begin{enumerate}
\item If $n$ is odd then
$$D(\sigma,s,\psi)=D_o(\sigma_o,s,\psi)^{n_c \ab n_c \ab^{-\half}}.$$
\item If  $n\equiv 2 \, (\operatorname{mod }4)$ then
$$D(\sigma,s,\psi)= \prod_{\widehat{F^*/{F^*}^2}}D_o(\beta\sigma_o,s,\psi) ^{d_c\ab d_c \ab^{-\half}}.$$
\end{enumerate}
\end{prop}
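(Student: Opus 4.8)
The plan is to combine the branching law of Lemma \ref{restlema} with the observation already highlighted in the text — that the standard intertwining operators commute with restriction from $\widetilde{G}$ to $\widetilde{G_o}$ — and then to note that $D(\sigma,s,\psi)$ and the $D_o$'s are determinants of one and the same operator written in compatible coordinates, so that a block-diagonal decomposition forces the stated power relation. First I would set up the structural input. Since $w_{_0}\in G_o$ normalizes $\widetilde{G_o}$ and $\widetilde{H_o}$, we have $\sigma^w\mid_{\widetilde{H_o}}=(\sigma\mid_{\widetilde{H_o}})^w$, so Lemma \ref{restlema} applies verbatim on the $w$-twisted side as well: when $n$ is odd, $I(\sigma,s)\mid_{\widetilde{G_o}}$ is $n_c\ab n_c\ab^{-\half}$ copies of $I(\sigma_o,s)$ and $I(\sigma^w,-s)\mid_{\widetilde{G_o}}$ is $n_c\ab n_c\ab^{-\half}$ copies of $I(\sigma_o^w,-s)$; when $n\equiv 2\,(\operatorname{mod }4)$ one gets instead $\bigoplus_{\beta\in\widehat{F^*/{F^*}^2}}$ of $d_c\ab d_c\ab^{-\half}$ copies of $I(\beta\sigma_o,s)$, and similarly for the $w$-twisted representation.

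Next I would transport this through the Whittaker functionals. Because $N\subseteq\widetilde{G_o}$, a $\psi$-Whittaker functional on $I(\sigma,s)$ is exactly a $\psi$-Whittaker functional on its restriction to $\widetilde{G_o}$; hence the branching above induces a direct-sum decomposition $Wh_\psi\bigl(I(\sigma,s)\bigr)=\bigoplus Wh_\psi\bigl(I(\sigma_o,s)\bigr)$ (respectively the $\beta$-indexed version), and likewise for the $w$-twisted spaces. The Jacquet-type integrals of Section 3.2 of \cite{GSS2} that identify $Wh_\psi\bigl(I(\sigma,s)\bigr)$ with $\widehat V$ and $Wh_\psi\bigl(I(\sigma_o,s)\bigr)$ with $\widehat{V_o}$ only involve integration over $N\subseteq\widetilde{G_o}$, so they are compatible with the decomposition of $V\mid_{\widetilde{H_o}}$ into copies of $V_o$ (resp. of the $\beta$-twists). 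Dualizing the statement — recalled in the text — that $A(\sigma,s)$ acts as $A_o(\sigma_o,s)$ (resp. $A_o(\beta\sigma_o,s)$) on each isotypic block, we conclude that under these identifications $A_\psi(\sigma,s)$ is block diagonal, with one diagonal block ${A_o}_\psi(\sigma_o,s)$ (resp. ${A_o}_\psi(\beta\sigma_o,s)$) for each copy appearing in Lemma \ref{restlema}.

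Finally, the determinant of a block-diagonal operator is the product of the determinants of its blocks, and equal blocks contribute a factor equal to their multiplicity; with the multiplicities read off from Lemma \ref{restlema} this gives $D(\sigma,s,\psi)=D_o(\sigma_o,s,\psi)^{n_c\ab n_c\ab^{-\half}}$ when $n$ is odd and $D(\sigma,s,\psi)=\prod_{\beta\in\widehat{F^*/{F^*}^2}}D_o(\beta\sigma_o,s,\psi)^{d_c\ab d_c\ab^{-\half}}$ when $n\equiv 2\,(\operatorname{mod }4)$. The main obstacle I anticipate is not the determinant computation but the bookkeeping of the second paragraph: one must check that the diagram relating restriction to $\widetilde{G_o}$, the branching decomposition of $V$, and the two Jacquet-integral isomorphisms commutes, and that after these identifications the dual of the restricted intertwining operator is genuinely the block-diagonal operator with the ${A_o}_\psi$'s on the diagonal — in particular that the normalizations of the Jacquet integrals introduce no spurious scalars and that the $w$-twist is handled consistently on both the source and target. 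All of this compatibility is established in \cite{GSS2}; the proof here amounts to assembling it with Lemma \ref{restlema} and the multiplicativity of determinants over blocks.
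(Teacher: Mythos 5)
Your proposal is correct and follows exactly the route the paper indicates: it assembles Lemma \ref{restlema}, the compatibility of the intertwining operators with restriction to $\widetilde{G_o}$ (which holds because $N\subseteq\widetilde{G_o}$), and the multiplicativity of determinants over a block-diagonal decomposition, which is precisely the one-sentence justification the paper gives before stating the proposition (the detailed verification being deferred to \cite{GSS2}, Theorem 10.9). No gap; your elaboration of the Whittaker-space bookkeeping is the content that \cite{GSS2} supplies.
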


\subsection{$D(\sigma,s,\psi)$ and $D_o(\sigma_o,s,\psi)$ in the tame cases}  \label{finalres}
In this section we assume that  $gcd(n,p)=1$ and that  $n$ is not divisible by 4. Also, we shall assume $I(\sigma_o,s)$ appears in the restriction of $I(\sigma)$ to $\widetilde{G_o}$. Accordingly, we shall assume that $\chi^d$ corresponds to $\sigma_o$ and that linear character associated with  both $\sigma$ and $\sigma_o$ is $\chi^n$.

\begin{prop} \label{unramcomp} Assume that $\psi$ is normalized and that $\chi^n$ is unramified.
\begin{enumerate}

\item \begin{equation} \label{degoeq}D_o(\sigma_o,s,\psi)=\mu(\sigma_o,s)^{\frac{1-d}{2}} \begin{cases} \gamma(1-ds,\chi^{-d},\psi_{d})  &  n \,  \operatorname{is} \,  \operatorname{odd}; \\ \widetilde{\gamma}(1-ds,\chi^{-d},\psi_{d}) &   n \equiv 2 \, (\operatorname{mod }4). \end{cases} \end{equation}

\item   If $n \equiv 2 \, (\operatorname{mod }4)$ then
\begin{equation}  \label{passdet} \prod_{\widehat{F^*/{F^*}^2}}D_o(\beta\sigma_o,s,\psi)=\mu(\sigma_o,s)^{1-n}\gamma^2(1-ns,\chi^{-n},\psi_{n}). \end{equation}

\item
\begin{equation}  \label{deg} D(\sigma, s, \psi)=\epsilon(n) \mu(\sigma, s)^{\frac{(1-n)n_c}{2}}  \cdot \gamma(1-ns, \chi^{-n}, \psi_{n})^{n_c} \end{equation}
where $$\epsilon(n)= \begin{cases} sign(F) &  n \equiv 2 \, (\operatorname{mod }4); \\ 1  &   \operatorname{otherwise}. \end{cases}$$
\end{enumerate}
\end{prop}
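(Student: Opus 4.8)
The plan is to treat the first item as the representation-theoretic input and then deduce the second and third items from it by combining the arithmetic-factor identity of Lemma~\ref{verifygao}, the Plancherel formula of Lemma~\ref{GoSz lem}, and the restriction dictionary of Lemma~\ref{restlema}/Proposition~\ref{reldet}. For the first item with $n$ odd one has $d=n$, so the hypothesis ``$\chi^n$ unramified'' coincides with ``$\chi^d$ unramified'', and \eqref{degoeq} is exactly Theorem~3.14 of \cite{GSS} (recorded as \eqref{example2} in the introduction). For $n\equiv 2\pmod 4$ one has $d=n/2$, which is odd, and the corresponding evaluation of the determinant of ${A_o}_\psi(\sigma_o,s)$ on $\widehat{V_o}$---now with the metaplectic $\widetilde{\gamma}$ in place of Tate's $\gamma$---is supplied by the analysis in \cite{Sz19} and \cite{GSS2}; I would cite it, observing that $\widetilde{\gamma}$ appears here because in this residue the local coefficients matrix is governed by the metaplectic functional equation of \cite{Sz3} (computed in \cite{Sweet} and reproduced in \cite{GoSz}) rather than by Tate's.

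For the second item I would multiply \eqref{degoeq} over $\beta\in\widehat{F^*/{F^*}^2}$, a group of order $4$ since the residual characteristic is odd. Three facts drive the computation. First, the linear character attached to $\beta\sigma_o$ is $(\beta\chi^d)^2=\chi^{2d}=\chi^n$ for every $\beta$, so by Lemma~\ref{oneplan} (equivalently, because the right-hand side of Lemma~\ref{GoSz lem} depends only on the linear character) one has $\mu(\beta\sigma_o,s)=\mu(\sigma_o,s)$ for all four $\beta$. Second, since $d$ is odd, $(\beta\chi)^{-d}=\beta\chi^{-d}=(\beta\chi^d)^{-1}$, so the product of the $\widetilde{\gamma}$-factors equals $\prod_{\beta}\widetilde{\gamma}\bigl(1-ds,(\beta\chi^d)^{-1},\psi\bigr)$, which is evaluated by Lemma~\ref{verifygao} applied to the character $\chi^d$ with parameter $ds$ (its hypothesis $(\chi^d)^2=\chi^n$ unramified holds by assumption). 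Third, since $\psi$ is normalized and $\chi^n$ is unramified, $e(\psi)=e(\chi^n)=0$, so the ratio of $L$-functions produced by Lemma~\ref{verifygao} is precisely $\mu(\sigma_o,s)^{-1}$ by Lemma~\ref{GoSz lem}. Collecting the powers of $\mu(\sigma_o,s)$, namely $4\cdot\frac{1-d}{2}-1=1-2d=1-n$, together with the arithmetic factors then gives \eqref{passdet}; the global constant $sign(F)$ it carries is the one contributed by Lemma~\ref{verifygao}, and it is precisely this constant that resurfaces as $\epsilon(n)$ in the third item.

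For the third item I would feed the first two items into Proposition~\ref{reldet}. Since $gcd(n,p)=1$, both $n_c$ and $d_c$ are prime to $p$, hence units of $\Of$, so $\ab n_c \ab=\ab d_c \ab=1$ and the exponents $n_c\ab n_c \ab^{-\half}$ and $d_c\ab d_c \ab^{-\half}$ in Proposition~\ref{reldet} collapse to $n_c$ and $d_c$. For $n$ odd, substituting \eqref{degoeq} (with $d=n$) into Proposition~\ref{reldet}(1) and using $\mu(\sigma,s)=\mu(\sigma_o,s)$ (Lemma~\ref{oneplan}) yields \eqref{deg} with $\epsilon(n)=1$. For $n\equiv 2\pmod 4$, substituting the second item into Proposition~\ref{reldet}(2) yields $D(\sigma,s,\psi)=\bigl(sign(F)\,\mu(\sigma_o,s)^{1-n}\gamma(1-ns,\chi^{-n},\psi)^2\bigr)^{d_c}$; one then checks that $2d_c=n_c$ (because $4c+1$ is odd, so $gcd(2d,4c+1)=gcd(d,4c+1)$) and that $d_c$ is odd (being a quotient of the odd integer $d$ by one of its divisors), whence $sign(F)^{d_c}=sign(F)$, and after replacing $\mu(\sigma_o,s)$ by $\mu(\sigma,s)$ one obtains \eqref{deg} with $\epsilon(n)=sign(F)$.

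The one genuinely substantive step is the first item in the residue $n\equiv 2\pmod 4$: identifying the determinant of the non-scalar operator ${A_o}_\psi(\sigma_o,s)$ on $\widehat{V_o}$ with $\mu(\sigma_o,s)^{\frac{1-d}{2}}\widetilde{\gamma}(1-ds,\chi^{-d},\psi)$ requires control of the entire local coefficients matrix, not merely one of its entries or its trace, and that is the part imported wholesale from \cite{Sz19} and \cite{GSS2}. Everything downstream of \eqref{degoeq} is bookkeeping with the identities of Section~\ref{tp} and the restriction formulas of Section~\ref{coverrep}; in a self-contained treatment, that Whittaker-functional computation---not anything in the present paper---would be the main obstacle.
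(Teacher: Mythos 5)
Your proposal is correct and follows the same route as the paper's own (very terse) proof: the first item is imported from \cite{GSS}, Theorem 3.14 and \cite{GSS2}, Theorem 9.12 (resting on the explicit local coefficients matrices of \cite{Sz19}, Section 4.4), the second item is obtained by multiplying \eqref{degoeq} over the four elements of $\widehat{F^*/{F^*}^2}$ and invoking Lemmas \ref{verifygao} and \ref{GoSz lem}, and the third item follows from Proposition \ref{reldet}. One remark: your bookkeeping for the second item correctly produces $sign(F)\,\mu(\sigma_o,s)^{1-n}\gamma^2(1-ns,\chi^{-n},\psi)$, and the displayed equation \eqref{passdet} omits this $sign(F)$ --- that omission is a typo in the statement rather than an error on your part, since without it the constant $\epsilon(n)=sign(F)$ in the third item could not emerge from Proposition \ref{reldet} (note $sign(F)^{d_c}=sign(F)$ as $d_c$ is odd), exactly as you observe.
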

{\begin{proof}
The proof of this Theorem is based on the explicit computation of the local coefficients matrices in \cite{Sz19}, Section 4.4. The first item was proved in \cite{GSS}, Theorem 3.14  for the case where $\chi^d$ is unramified. In \cite{GSS}, Theorem 3.14 we had $\psi$ on the right hand side rather than $\psi_d$. As in Lemma \ref{verifygao}, the change is justified by \eqref{changepsi}. To finish the proof of the first item it is left to consider the case where $n \equiv 2 \, (\operatorname{mod }4)$ and  $\chi^d$ in ramified. From our assumptions it follows that $p$ is odd and that $\chi^d\mid_{\Of^*}=\eta_{_\K}$. This case was handled in \cite{GSS2}, Theorem 9.12. In that theorem also we had  $\psi$ on the right hand side rather than $\psi_d$. The change is justified by \eqref{changepsi} and by the first item in Lemma \ref {d and sign} (note that $\frac{d-1}{2}$ is even). The second item follows from \eqref{degoeq} combined with Lemmas \ref{verifygao} and \ref{GoSz lem} and by using the fact that $\widehat{F^*/{F^*}^2}$ has 4 elements. Proposition \ref{reldet} and \eqref{degoeq} prove the third item for odd $n$. For the $n\equiv 2 \, (\operatorname{mod }4)$ case one uses \eqref{passdet} as well.
\end{proof}}
The assumption that $\psi$ is normalized in Proposition \ref{unramcomp} can be removed by making obvious changes in the proofs in \cite{Sz19}, \cite{GSS}, \cite{GSS2}. However, our goal is to show that {for $p$-adic fields of odd residual characteristic,} this proposition hold  in the ramified cases with no restriction on $\psi$  (up to a sign in the $n \equiv 2 \, (\operatorname{mod }4)$ cases).

To compute $D_o(\sigma_o,s,\psi)$ in the ramified cases we shall need an explicit description of the local coefficients matrices. This description is  given in \cite{Sz19}, Section 4.4  under the assumption that $\psi$ is normalized. These matrices are simpler than those computed in the unramified case. We now drop the assumption on $\psi$. For this purpose we pick $J$ to be the specific lift of $\widehat{\K^*/{\K^*}^d}$ described in \cite{Sz19}, Example 2.4. Since  $d$ is odd this just means that $\eta(\varpi)=1$ for all $\eta \in J$. We denote by $\eta_o$ the generator of $J$ used in  \cite{Sz19}, Section 2.4 (in the notation of that paper $\eta_o=\eta_\varpi$) and we denote $\widetilde{\eta_o}=\eta^{\frac {d+1}{2}}$. Observe that $\widetilde{\eta_o}$ is another generator of $J$.
\begin{lem} \label{goramlcm} Assume that $\chi^n$ is ramified.
\begin{enumerate}
\item Suppose that $n$ is odd. The $d \times d$ matrix whose $(i,j)$ coordinate equals
$$\tau(i,j,\chi,s,\psi)= \begin{cases} \epsilon(1-s,\chi^{-1}\eta_o^{i+j},\psi)  & i-j\equiv e(\chi)-e(\psi) \, (\operatorname{mod }{ d}) ; \\ \\ 0  & otherwise \end{cases}$$
is a local coefficients matrix associated with $\sigma_o$ and $\psi$.
\item Suppose that $n  \equiv 2 \, (\operatorname{mod }4)$.  The $d \times d$ matrix whose $(i,j)$ coordinate equals
$$\widetilde{\tau}(i,j,\chi,s,\psi)= \begin{cases} \widetilde{\epsilon}(1-s,\chi^{-1}\widetilde{\eta_o}^{i+j},\psi)  & i-j\equiv e(\chi)-e(\psi) \, (\operatorname{mod }{ d}) ; \\ \\ 0  & otherwise \end{cases}$$
is a local coefficients matrix associated with $\sigma_o$ and $\psi$.
\end{enumerate}
\end{lem}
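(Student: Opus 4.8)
The plan is to rerun the computation of the local coefficients matrices carried out in \cite{Sz19}, Section 4.4, which proves precisely these two formulas under the extra hypothesis that $\psi$ is normalized; in that case $e(\psi)=0$, so the band condition there reads $i-j\equiv e(\chi)\pmod{n}$. Recall the shape of that argument: one fixes the models of $Wh_\psi\bigl(I(\sigma_o,s)\bigr)$ and $Wh_\psi\bigl(I(\sigma_o^w,-s)\bigr)$ furnished by the Jacquet-type integrals of Section 3.2 of \cite{GSS2} and the resulting identifications with $\widehat{V_o}$; on $\widehat{V_o}$ one takes the basis dictated by the parametrization of the genuine characters of $Z(\widetilde{H_o})$ by $d^{th}$ powers of characters of $F^*$ together with the fixed lift $J$; in this basis ${A_o}_\psi(\sigma_o,s)$ decomposes into rank-one blocks, each of which is a rank-one Tate local coefficient and therefore equals an $\epsilon$-factor $\epsilon(1-s,\chi^{-1}\eta_o^{i+j},\psi)$ (respectively $\widetilde{\epsilon}(1-s,\chi^{-1}\widetilde{\eta_o}^{i+j},\psi)$ when $n\equiv2\pmod{4}$), while the block indexed by $(i,j)$ vanishes unless $(i,j)$ is permitted by the $w$-conjugation on $Z(\widetilde{H_o})$ and by conductor considerations. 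Since $\chi^d$ is ramified — which follows from the hypothesis that $\chi^n$ is ramified — item 2 of Lemma \ref{Jlemma} gives $e(\chi\eta_o^{i+j})=e(\chi)$ for all $i,j$, so every $\epsilon$-factor that occurs is attached to a character of the one conductor $e(\chi)$ and the entries along the non-zero band are uniform.

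The only input not already present in \cite{Sz19} is the parameter $e(\psi)$, which enters in two places. In the entries it enters only through the Tate conventions recalled in Section \ref{tp}: because $\eta_o(\varpi)=1$ for our choice of $J$ and $e(\chi\eta_o^{i+j})=e(\chi)$, the $\epsilon$-factors behave well under the replacement of the normalized character $\psi_{\varpi^{e(\psi)}}$ by $\psi$ via \eqref{changepsi}, and the self-dual Haar measure attached to $\psi$ scales by the matching power of $q$, so the block computation yields the stated entry $\epsilon(1-s,\chi^{-1}\eta_o^{i+j},\psi)$ directly. In the support, $e(\psi)$ enters because the conductor of $\psi$ appears in the range of integration of the Jacquet-type integrals; when $\psi$ is normalized this contributes nothing, and in general it shifts the non-zero band from $i-j\equiv e(\chi)$ to $i-j\equiv e(\chi)-e(\psi)\pmod{n}$. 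The second item of the lemma is handled in the same way after expanding $\widetilde{\epsilon}(1-s,\chi^{-1}\widetilde{\eta_o}^{i+j},\psi)$ through its definition \eqref{eptildef} in terms of Tate $\epsilon$-factors and the Weil index $\gamma(\psi)$, applying \eqref{changepsi} in the numerator and the denominator and using the transformation of $\gamma(\psi)$ under an additive twist; here one also has to recall that for $n\equiv2\pmod{4}$ the parametrization $\chi^d\leftrightarrow\sigma_o$ is itself $\psi$-dependent, and that $\widetilde{\eta_o}=\eta_o^{(d+1)/2}$ is the generator of $J$ distinguished by this $\psi$, as in Section 2.4 of \cite{Sz19}.

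The step I expect to be the main obstacle is determining the support shift with the correct sign when $e(\psi)$ is odd. In that case $\varpi^{-e(\psi)}$ is a non-square, the relevant change of variable in the Jacquet-type integral is not implemented by conjugation by a torus element of $\widetilde{H_o}$, and one must re-examine the integral directly to be sure that the reindexing of the basis of $\widehat{V_o}$ moves the band by exactly $e(\chi)-e(\psi)$ rather than by $e(\chi)+e(\psi)$ or $e(\psi)-e(\chi)$. In the $n\equiv2\pmod{4}$ case there is the further bookkeeping of checking that the $\psi$-twist of $\sigma_o$ cancels the $\psi$-twist of the Whittaker character, so that no extra quadratic character or power of $q$ is left over. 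Granting these points, the remainder is a routine propagation of the identities of Section \ref{tp} through the rank-one reduction already established in \cite{Sz19}.
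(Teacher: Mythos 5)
Your proposal follows essentially the same route as the paper: both reduce the lemma to the explicit local coefficients matrix computation of Section 4.4 of \cite{Sz19} (Theorem 4.12 together with the ramified portions of Propositions 4.16 and 4.18) and then track the effect of dropping the normalization of $\psi$, which shifts the non-zero band from $i-j\equiv e(\chi)$ to $i-j\equiv e(\chi)-e(\psi) \pmod{n}$ while leaving the entries as the stated $\epsilon$- and $\widetilde{\epsilon}$-factors. The concerns you flag (the sign of the shift, the $\psi$-dependence of the parametrization when $n\equiv 2 \pmod 4$) are exactly the points the paper disposes of by the substitution $e(\chi)\mapsto e(\chi)-e(\psi)$ in the cited argument, so no genuinely different idea is involved.
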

\begin{proof}
We use the notation of \cite{Sz19}. Suppose first that $n$ is odd. By Theorem 4.12 of \cite{Sz19},
$$ \tau(i,j,\chi,s,\psi)= \gamma_{_J}(1-s,\chi^{-1}\eta_o^{i+j},\psi,k^{i-j}).$$
Thus, one need to show that
$$\gamma_{_J}(1-s,\chi^{-1},\psi, k^{t})=\begin{cases} \epsilon(1-s,\chi^{-1},\psi)  & t\equiv e(\chi)-e(\psi) \, (\operatorname{mod }{ d}) ; \\ \\ 0  & otherwise .\end{cases}$$
The proof goes almost word for word as the ramified portion of the proof of Proposition 4.16 in \cite{Sz19}. In fact, one only need to replace $e(\chi)$ be $e(\chi)-e(\psi)$ in the last two lines of Page 144 of \cite{Sz19}.
For the $n  \equiv 2 \, (\operatorname{mod }4)$ case one uses similar argument, replacing $\gamma$ with $\widetilde{\gamma}$, $\epsilon$ with $\widetilde{\epsilon}$ and Proposition 4.16 in \cite{Sz19} with Proposition 4.18 in \cite{Sz19}.
\end{proof}
We are now ready to state and prove the desired ramified analog of Proportion  \ref{unramcomp}.

\begin{thm} \label{ramdet} Assume that the residual characteristic of $F$ is odd and that $\chi^n$ is ramified.
\begin{enumerate}

\item \begin{equation}\label{ramdego}  D_o(\sigma_o,s,\psi)=\mu(\sigma_o,s)^{\frac{1-d}{2}} \begin{cases} \gamma(1-ds,\chi^{-d},\psi_{ d})  &  n \,  \operatorname{is} \,  \operatorname{odd}; \\ \widetilde{\gamma}(1-ds,\chi^{-d},\psi_{ d}) &   n \equiv 2 \, (\operatorname{mod }4). \end{cases} \end{equation}
\item   If $n \equiv 2 \, (\operatorname{mod }4)$ then
\begin{equation}  \label{passdetram} \prod_{\widehat{F^*/{F^*}^2}}D_o(\beta\sigma_o,s,\psi)=sign(F)^{e(\chi^n)}\mu(\sigma_o,s)^{1-n}\gamma^2(1-ns,\chi^{-n},\psi_{ n}). \end{equation}
\item $$D(\sigma, s, \psi)=sign(\sigma) \mu(\sigma, s)^{\frac{(1-n)n_c}{2}}  \cdot \gamma(1-ns, \chi^{-n}, \psi_{ n})^{n_c}$$
where $$sign(\sigma)= \begin{cases} sign(F)^{e(\chi^n)} &  n \equiv 2 \, (\operatorname{mod }4); \\ 1  &  n \,  \operatorname{is \, odd}. \end{cases}$$

\end{enumerate}
\end{thm}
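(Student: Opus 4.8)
The plan is to run the explicit local coefficients matrices of Lemma~\ref{goramlcm} through the generalized Hasse--Davenport identities of Theorem~\ref{mainres}, and then transfer the outcome from $\widetilde{G_o}$ to $\widetilde{G}$ by Proposition~\ref{reldet}.

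I would start with \eqref{ramdego}. For $n$ odd, the matrix of Lemma~\ref{goramlcm}(1) is monomial: row $i$ has its single nonzero entry in the column $j$ with $j\equiv i-(e(\chi)-e(\psi))\pmod n$, and that entry is $\epsilon(1-s,\chi^{-1}\eta_o^{i+j},\psi)$. Hence $D_o(\sigma_o,s,\psi)$ is the signature of the shift $i\mapsto i-(e(\chi)-e(\psi))$ on $\Z/d\Z$ times the product of these entries. Since $d$ is odd the shift is a product of cycles of a common odd length, so its signature is $1$; and since $2$ is invertible mod $d$, the exponents $i+j\equiv 2i-(e(\chi)-e(\psi))$ sweep $\Z/d\Z$ exactly once, so $\{\chi^{-1}\eta_o^{i+j}\}$ is exactly $\{(\chi\eta)^{-1}:\eta\in J\}$. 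Thus $D_o(\sigma_o,s,\psi)=\prod_{\eta\in J}\epsilon(1-s,(\chi\eta)^{-1},\psi)$, which \eqref{mainepodd} rewrites as $q^{\frac{d-1}{2}(e(\psi)-e(\chi^d))}\epsilon(1-ds,\chi^{-d},\psi_d)$. Corollary~\ref{tobeused} turns the power of $q$ into $\mu(\sigma_o,s)^{\frac{1-d}{2}}$ (here $d=n$ and $\chi^n$ is ramified), ramifiedness of $\chi^{d}$ gives $\epsilon=\gamma$, and \eqref{changepsi} restores $\psi$ in place of $\psi_d$; once the residual scalars are matched this is \eqref{ramdego}. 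The case $n\equiv2\pmod4$ is identical, with Lemma~\ref{goramlcm}(2), $\widetilde{\epsilon}$ and \eqref{mainmetaepodd} replacing Lemma~\ref{goramlcm}(1), $\epsilon$ and \eqref{mainepodd}, and using $e(\psi_d)=e(\psi)$ together with the change-of-character rules for $\widetilde{\epsilon}$ encoded in \eqref{eptildef}.

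For \eqref{passdetram} I would multiply \eqref{ramdego} over $\beta\in\widehat{F^*/{F^*}^2}$. As $n$ is even, $(\beta\chi)^n=\chi^n$, so $\mu(\beta\sigma_o,s)=\mu(\sigma_o,s)$, and what remains is $\prod_\beta\widetilde{\gamma}\bigl(1-ds,(\beta\chi)^{-d},\psi\bigr)=\prod_\beta\widetilde{\epsilon}\bigl(1-ds,\chi^{-d}\beta,\psi\bigr)$, using $\beta^d=\beta$ and $\widetilde{\gamma}=\widetilde{\epsilon}$ since $\chi^{2d}=\chi^n$ is ramified. This is the left side of the $d=2$ instance of \eqref{gtilsatog} for the character $\chi^d$, hence equals $q^{e(\psi)-e(\chi^d)}\epsilon^{2}(1-2ds,\chi^{-2d},\psi_2)\,sign(F)^{e(\chi^d)}$; folding in $\mu(\sigma_o,s)$ via Corollary~\ref{tobeused} (with $e(\chi^d)=e(\chi^{2d})=e(\chi^n)$ by Lemma~\ref{Jlemma}), $\epsilon=\gamma$ for the ramified $\chi^{2d}$, and \eqref{changepsi} for $\psi_2\leftrightarrow\psi$, yields $sign(F)^{e(\chi^n)}\mu(\sigma_o,s)^{1-n}\gamma^{2}(1-ns,\chi^{-n},\psi)$, exactly parallel to the unramified derivation of Proposition~\ref{unramcomp}(2) from \eqref{degoeq} and Lemmas~\ref{verifygao} and~\ref{GoSz lem}. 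The formula for $D(\sigma,s,\psi)$ then drops out of Proposition~\ref{reldet}: $n_c$ and $d_c$ are prime to $p$, so $n_c\ab n_c\ab^{-\half}=n_c$ and $d_c\ab d_c\ab^{-\half}=d_c$; for $n$ odd one raises \eqref{ramdego} to the $n_c$ and uses $\mu(\sigma_o,s)=\mu(\sigma,s)$ (Lemma~\ref{oneplan}), while for $n\equiv2\pmod4$ one raises \eqref{passdetram} to the $d_c$ and uses $n_c=2d_c$ with $d_c$ odd, so that $sign(F)^{e(\chi^n)(d_c-1)}=1$ and $sign(\sigma)=sign(F)^{e(\chi^n)}$ comes out as stated.

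The structural steps above are short; the real work is the scalar bookkeeping, which is also the only place something could go wrong. One must chase every root of unity and every sign through Theorem~\ref{mainres} and through the changes of additive character $\psi\leftrightarrow\psi_d$ and $\psi\leftrightarrow\psi_2$, and check that they collapse to the displayed right-hand sides — in particular that the permutation signature and the $\chi(\cdot)$-type corrections are trivial in the odd case, and that the Weil indices $\gamma(\psi)$ and quadratic Gauss sums $G_\psi(\K)$ entering through $\widetilde{\epsilon}$ in the $n\equiv2\pmod4$ case assemble, via \eqref{quadinv} and the first item of Lemma~\ref{d and sign}, into precisely $sign(F)^{e(\chi^n)}$.
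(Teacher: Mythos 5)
Your proposal follows the paper's own proof essentially step for step: identify the local coefficients matrix of Lemma~\ref{goramlcm} as monomial so that $D_o(\sigma_o,s,\psi)$ becomes $\prod_{\eta\in J}\epsilon(1-s,(\chi\eta)^{-1},\psi)$ (resp.\ the $\widetilde{\epsilon}$-product), apply \eqref{mainepodd}, \eqref{mainmetaepodd} and \eqref{gtilsatog} together with Corollary~\ref{tobeused}, and transfer to $\widetilde{G}$ via Proposition~\ref{reldet}. The only differences are that you supply more detail than the paper on why the permutation signature is trivial and why the exponents $i+j$ sweep $J$ exactly once (the paper compresses this into the remark that $\eta_o$ and $\widetilde{\eta_o}$ generate $J$), and you explicitly flag the $\psi_d\leftrightarrow\psi$ scalar bookkeeping that the paper likewise leaves implicit.
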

\begin{proof} We use Lemma \ref{goramlcm} and the fact that  both $\eta$ and $\widetilde{\eta_o}$ generate $J$.
\begin{equation}\nonumber
\begin{split}  D_o(\sigma_o,s,\psi)= & \begin{cases} \prod_{i=0}^{n-1} \epsilon(1-s,(\chi\eta_o^i)^{-1},\psi)  &  n \,  \operatorname{is} \,  \operatorname{odd}; \\ \prod_{i=0}^{n-1} \widetilde{\epsilon}(1-s,(\chi\widetilde{\eta_o}^i)^{-1},\psi) &   n \equiv 2 \, (\operatorname{mod }4) \end{cases}\\ =& \begin{cases} \prod_{\eta \in J} \epsilon(1-s,(\chi\eta)^{-1},\psi)  &  n \,  \operatorname{is} \,  \operatorname{odd}; \\ \prod_{\eta \in J} \widetilde{\epsilon}(1-s,(\chi\eta)^{-1},\psi) &   n \equiv 2 \, (\operatorname{mod }4) .\end{cases} \end{split} \end{equation}
The first item in this  Theorem now follows from  \eqref{mainepodd} and \eqref{mainmetaepodd} along with Corollary \ref{tobeused}. For the second item use the fact that $\widehat{F^*/{F^*}^2}$ has 4 elements and combine \eqref{ramdego} with \eqref{gtilsatog}. The odd case in the third item of this theorem follows from the first item and from Proposition \ref{reldet}. For the $n \equiv 2 \, (\operatorname{mod }4)$ case we also use the second item.
\end{proof}

\subsection{A remark on the even cases}
While Theorem \ref{tau theorem} is sufficient for proving even analogs to \eqref{mainepodd} and \eqref{mainmetaepodd} we chose not to explore them in this paper as these analogs are connected to the representation theory of the $n$-fold cover of $SL_2(F)$ where $n$ is divisible by 4. The structure of this group is { different} than the structure of the other covering groups studied here. We refer the reader to \cite{GaoGurKar1}  Section 6.1, for details. We also note that the local coefficients matrices for this group are { different} than those of the local coefficients matrices for the other coverings of $SL_2(F)$ discussed in this paper, see \cite{GSS2}, Section 9.4. In the unramified cases, the determinant of the local coefficients  matrix for that group was computed in  \cite{GSS2}, Theorem 9.13 and neither $\gamma$ nor $\widetilde{\gamma}$ appears there. Moreover, a computation of one ramified example in that theorem suggests that the general form of this determinant is somehow mysterious. We hope that a better understating of the representation theory of that group would also shed a light on the correct even analogs to  \eqref{mainepodd} and \eqref{mainmetaepodd}. Last we note that we believe that when $n$ is divisible by 4, \eqref{deg} should be true in the ramified cases with no sign involved. We hope to discuss this in a future work.

{{\bf Acknowledgments.}\\ I would like to thank Fan Gao, Gil Alon, Ofer Hadas, Nadya Gurevich and Amalia Szpruch-Musih for valuable discussions on the subject matter. I would also like to thank the referee for the  enlightening comments.}

{{\bf Funding.}\\ This research was supported by the Israel Science Foundation (grant No. 1643/23).}

\bibliography{hd}

\begin{thebibliography}{10}

\bibitem{BH}
{\sc Bushnell, C.~J., and Henniart, G.}
\newblock {\em The local {L}anglands conjecture for {$\rm GL(2)$}}, vol.~335 of
  {\em Grundlehren der mathematischen Wissenschaften [Fundamental Principles of
  Mathematical Sciences]}.
\newblock Springer-Verlag, Berlin, 2006.

\bibitem{HD}
{\sc Davenport, H., and Hasse, H.}
\newblock Die {N}ullstellen der {K}ongruenzzetafunktionen in gewissen
  zyklischen {F}\"{a}llen.
\newblock {\em J. Reine Angew. Math. 172\/} (1935), 151--182.

\bibitem{del}
{\sc Deligne, P.}
\newblock Les constantes des \'{e}quations fonctionnelles des fonctions {$L$}.
\newblock In {\em Modular functions of one variable, {II} ({P}roc. {I}nternat.
  {S}ummer {S}chool, {U}niv. {A}ntwerp, {A}ntwerp, 1972)\/} (1973), Lecture
  Notes in Math., Vol. 349, Springer, Berlin, pp.~501--597.

\bibitem{FV}
{\sc Fesenko, I.~B., and Vostokov, S.~V.}
\newblock {\em Local fields and their extensions}, second~ed., vol.~121 of {\em
  Translations of Mathematical Monographs}.
\newblock American Mathematical Society, Providence, RI, 2002.
\newblock With a foreword by I. R. Shafarevich.

\bibitem{GaoGurKar1}
{\sc Gao, F., Gurevich, N., and Karasiewicz, E.}
\newblock Genuine pro-$ p $ iwahori--hecke algebras, gelfand--graev
  representations, and some applications.
\newblock {\em arXiv preprint arXiv:2204.13053\/} (2022).

\bibitem{GSS}
{\sc Gao, F., Shahidi, F., and Szpruch, D.}
\newblock On the local coefficients matrix for coverings of {$\rm SL_2$}.
\newblock In {\em Geometry, algebra, number theory, and their information
  technology applications}, vol.~251 of {\em Springer Proc. Math. Stat.}
  Springer, Cham, 2018, pp.~207--244.

\bibitem{GSS2}
{\sc Gao, F., Shahidi, F., and Szpruch, D.}
\newblock Local coefficients and gamma factors for principal series of covering
  groups.
\newblock {\em Mem. Amer. Math. Soc. 283}, 1399 (2023), v+135.

\bibitem{GoSz}
{\sc Goldberg, D., and Szpruch, D.}
\newblock Plancherel measures for coverings of {$p$}-adic {$\text{SL}_2(F)$}.
\newblock {\em Int. J. Number Theory 12}, 7 (2016), 1907--1936.

\bibitem{GrKo}
{\sc Gross, B.~H., and Koblitz, N.}
\newblock Gauss sums and the {$p$}-adic {$\Gamma $}-function.
\newblock {\em Ann. of Math. (2) 109}, 3 (1979), 569--581.

\bibitem{Kahn}
{\sc Kahn, B.}
\newblock Le groupe des classes modulo {$2$}, d'apr\`es {C}onner et {P}erlis.
\newblock In {\em Seminar on number theory, 1984--1985 ({T}alence, 1984/1985)}.
  Univ. Bordeaux I, Talence, 1985, pp.~Exp.\ No.\ 26, 29.

\bibitem{Kahn87}
{\sc Kahn, B.}
\newblock Sommes de {G}auss attach\'{e}es aux caract\`{e}res quadratiques: une
  conjecture de {P}ierre {C}onner.
\newblock {\em Comment. Math. Helv. 62}, 4 (1987), 532--541.

\bibitem{KP}
{\sc Kazhdan, D.~A., and Patterson, S.~J.}
\newblock Metaplectic forms.
\newblock {\em Inst. Hautes \'Etudes Sci. Publ. Math.}, 59 (1984), 35--142.

\bibitem{Kub}
{\sc Kubota, T.}
\newblock {\em On automorphic functions and the reciprocity law in a number
  field}.
\newblock Lectures in Mathematics, Department of Mathematics, Kyoto University,
  No. 2. Kinokuniya Book-Store Co., Ltd., Tokyo, 1969.

\bibitem{Kudla}
{\sc Kudla, S.~S.}
\newblock Tate's thesis.
\newblock In {\em An introduction to the {L}anglands program ({J}erusalem,
  2001)}. Birkh\"auser Boston, Boston, MA, 2003, pp.~109--131.

\bibitem{Shabook}
{\sc Shahidi, F.}
\newblock {\em Eisenstein series and automorphic {$L$}-functions}, vol.~58 of
  {\em American Mathematical Society Colloquium Publications}.
\newblock American Mathematical Society, Providence, RI, 2010.

\bibitem{Sweet}
{\sc Sweet, J.}
\newblock Functional equations of {$p$}-adic zeta integrals and representations
  of the metaplectic group.
\newblock {\em preprint\/} (1995).

\bibitem{Sz3}
{\sc Szpruch, D.}
\newblock On the existence of a {$p$}-adic metaplectic {T}ate-type
  {$\tilde\gamma$}-factor.
\newblock {\em Ramanujan J. 26}, 1 (2011), 45--53.

\bibitem{Sz9}
{\sc Szpruch, D.}
\newblock A short proof for the relation between {W}eil indices and
  {$\epsilon$}-factors.
\newblock {\em Comm. Algebra 46}, 7 (2018), 2846--2851.

\bibitem{Sz19}
{\sc Szpruch, D.}
\newblock On {S}hahidi local coefficients matrix.
\newblock {\em Manuscripta Math. 159}, 1-2 (2019), 117--159.

\bibitem{Sz22}
{\sc Szpruch, D.}
\newblock Tate {$\gamma$}-factor, {W}eil index, and the metaplectic
  {$\tilde\gamma$}-factor.
\newblock {\em Ramanujan J. 57}, 2 (2022), 697--706.

\bibitem{Tate79}
{\sc Tate, J.}
\newblock Number theoretic background.
\newblock In {\em Automorphic forms, representations and {$L$}-functions
  ({P}roc. {S}ympos. {P}ure {M}ath., {O}regon {S}tate {U}niv., {C}orvallis,
  {O}re., 1977), {P}art 2\/} (1979), Proc. Sympos. Pure Math., XXXIII, Amer.
  Math. Soc., Providence, R.I., pp.~3--26.

\bibitem{T}
{\sc Tate, J.~T.}
\newblock Fourier analysis in number fields, and {H}ecke's zeta-functions.
\newblock In {\em Algebraic {N}umber {T}heory ({P}roc. {I}nstructional {C}onf.,
  {B}righton, 1965)}. Thompson, Washington, D.C., 1967, pp.~305--347.

\bibitem{Weil65}
{\sc Weil, A.}
\newblock Sur la formule de {S}iegel dans la th\'{e}orie des groupes
  classiques.
\newblock {\em Acta Math. 113\/} (1965), 1--87.

\end{thebibliography}
\bibliographystyle{acm}

\end{document}